\newcommand{\nc}{\newcommand}
\nc{\Gl}{\mathsf{GL}} \nc{\Or}{\mathsf{O}}  \nc{\SO}{\mathsf{SO}}   \nc{\Sl}{\mathsf{SL}}
\nc{\G}{\mathsf{G}} \nc{\K}{\mathsf{K}}  \nc{\T}{\mathsf{T}} \nc{\Lsf}{\mathsf{L}}
\nc{\Qb}{\mathsf{Q}_\Beta} \nc{\Hb}{\mathsf{H}_\Beta} \nc{\Ub}{\mathsf{U}_\Beta}
\nc{\Gb}{\mathsf{G}_\Beta} \nc{\Kb}{\mathsf{K}_\Beta}
\nc{\PPP}{\mathsf{P}}
\nc{\U}{\mathsf{U}} \nc{\N}{\mathsf{N}}
\nc{\Ss}{\mathsf{S}}
\nc{\laH}{\la\!\la} \nc{\raH}{\ra\!\ra}
\nc{\ipH}{{\laH \cdot, \cdot \raH}}
\nc{\Vg}{{V(\ggo)}}
\nc{\alert}{\color{blue}}
\nc{\fg}{\mathfrak{f}}  \nc{\vg}{\mathfrak{v}} \nc{\wg}{\mathfrak{w}} \nc{\zg}{\mathfrak{z}} \nc{\ngo}{\mathfrak{n}} \nc{\kg}{\mathfrak{k}} \nc{\mg}{\mathfrak{m}} \nc{\bg}{\mathfrak{b}} \nc{\ggo}{\mathfrak{g}} \nc{\ggob}{\overline{\mathfrak{g}}} \nc{\sog}{\mathfrak{so}} \nc{\sug}{\mathfrak{su}} \nc{\spg}{\mathfrak{sp}} \nc{\slg}{\mathfrak{sl}} \nc{\glg}{\mathfrak{gl}} \nc{\cg}{\mathfrak{c}} \nc{\rg}{\mathfrak{r}}  \nc{\hg}{\mathfrak{h}} \nc{\tgo}{\mathfrak{t}} \nc{\ug}{\mathfrak{u}} \nc{\dg}{\mathfrak{d}} \nc{\ag}{\mathfrak{a}} \nc{\pg}{\mathfrak{p}} \nc{\sg}{\mathfrak{s}} \nc{\affg}{\mathfrak{aff}} \nc{\qg}{\mathfrak{q}}
\nc{\Xg}{\mathfrak{X}} \nc{\lgo}{\mathfrak{l}}
\nc{\pca}{\mathcal{P}} \nc{\nca}{\mathcal{N}} \nc{\lca}{\mathcal{L}} \nc{\oca}{\mathcal{O}} \nc{\mca}{\mathcal{M}} \nc{\tca}{\mathcal{T}} \nc{\aca}{\mathcal{A}} \nc{\cca}{\mathcal{C}} \nc{\gca}{\mathcal{G}} \nc{\sca}{\mathcal{S}} \nc{\hca}{\mathcal{H}} \nc{\bca}{\mathcal{B}} \nc{\dca}{\mathcal{D}}
\nc{\vp}{\varphi} \nc{\ddt}{\tfrac{{\rm d}}{{\rm d}t}} \nc{\dds}{\tfrac{{\rm d}}{{\rm d}s}} \nc{\ddtbig}{\frac{{\rm d}}{{\rm d}t}} \nc{\dd}{{\rm d}}
\nc{\dpar}{\tfrac{\partial}{\partial t}} \nc{\im}{\mathtt{i}}
\renewcommand{\Re}{{\rm Re}}\renewcommand{\Im}{{\rm Im}}
\nc{\RR}{{\mathbb R}} \nc{\HH}{{\mathbb H}} \nc{\CC}{{\mathbb C}} \nc{\ZZ}{{\mathbb Z}}
\nc{\FF}{{\mathbb F}} \nc{\NN}{{\mathbb N}} \nc{\QQ}{{\mathbb Q}} \nc{\PP}{{\mathbb P}}
\nc{\vs}{\vspace{.2cm}} \nc{\vsp}{\vspace{1cm}} \nc{\ip}{{\langle\cdot,\cdot\rangle}}
\nc{\ipp}{(\cdot,\cdot)} \nc{\la}{\langle} \nc{\ra}{\rangle} \nc{\unm}{\tfrac{1}{2}}
\nc{\unc}{\tfrac{1}{4}} \nc{\und}{\tfrac{1}{16}} \nc{\no}{\vs\noindent}
\nc{\lam}{\Lambda^2(\RR^n)^*\otimes\RR^n} \nc{\tangz}{{\rm T}^{\rm Zar}}
\nc{\lamg}{\Lambda^2\ggo^*\otimes\ggo}
\nc{\nor}{{\sf n}}  \nc{\mum}{/\!\!/} \nc{\kir}{/\!\!/\!\!/}
\nc{\Ri}{\tfrac{4\Ric_{\mu}}{||\mu||^2}} \nc{\ds}{\displaystyle}
\nc{\lb}{[\cdot,\cdot]} \nc{\isn}{\tfrac{1}{||v||^2}}
\nc{\gkp}{(\ggo=\kg\oplus\pg,\ip)} \nc{\ukh}{(\ug=\kg\oplus\hg,\ip)}
\nc{\tgkp}{(\tilde{\ggo}=\kg\oplus\pg,\ip)}
\nc{\wt}{\widetilde}
\nc{\raw}{\rightarrow} \nc{\lraw}{\longrightarrow} \nc{\hqn}{\mathcal{H}_{q,n}}
\newcommand{\minimatrix}[4]{\left(\begin{smallmatrix} {#1} & {#2} \\ {#3} & {#4} \end{smallmatrix}\right)}
\newcommand{\twomatrix}[4]{\left(\begin{array}{cc} {#1} & {#2} \\ {#3} & {#4} \end{array} \right)}
\newcommand{\threematrix}[9]{\left(\begin{array}{ccc} {#1} & {#2} & {#3} \\ {#4} & {#5} & {#6}\\ {#7} & {#8} & {#9} \end{array} \right)}
\nc{\Spec}{\operatorname{Spec}}
\nc{\ad}{\operatorname{ad}}  \nc{\Aut}{\operatorname{Aut}}   \nc{\Inn}{\operatorname{Inn}}   \nc{\Lie}{\operatorname{Lie}} \nc{\Ad}{\operatorname{Ad}} \nc{\Der}{\operatorname{Der}} \nc{\rad}{\operatorname{r}} \nc{\kf}{\operatorname{B}}
\nc{\End}{\operatorname{End}} \nc{\rank}{\operatorname{rank}} \nc{\Ker}{\operatorname{Ker}} \nc{\tr}{\operatorname{tr}} \nc{\sym}{\operatorname{sym}} \nc{\diag}{\operatorname{diag}} \nc{\proy}{\operatorname{pr}} \nc{\Adj}{\operatorname{Adj}} \nc{\vspan}{\operatorname{span}}
\nc{\Hess}{\operatorname{Hess}}  \nc{\dif}{\operatorname{d}} \nc{\sen}{\operatorname{sen}} \nc{\grad}{\operatorname{grad}} \nc{\Order}{\operatorname{O}} \nc{\divg}{\operatorname{div}}
\nc{\Iso}{\operatorname{Iso}} \nc{\Diff}{\operatorname{Diff}} \nc{\ricci}{\operatorname{Ric}}  \nc{\Rc}{\operatorname{Rc}} \nc{\Ricci}{\operatorname{Ric}} \nc{\Riem}{\operatorname{Rm}} \nc{\scalar}{\operatorname{sc}} \nc{\scalarm}{\hat{\operatorname{R}}} \nc{\riccim}{\widehat{\operatorname{Ric}}} \nc{\tang}{\operatorname{T}} \nc{\vol}{\operatorname{vol}}
\nc{\mm}{\operatorname{M}} \nc{\CH}{\operatorname{CH}} \nc{\Irr}{\operatorname{Irr}} \nc{\mcc}{\operatorname{mcc}}
\nc{\Id}{\operatorname{Id}}  \nc{\mmm}{\operatorname{m}}
\theoremstyle{plain}
\newtheorem{theorem}{Theorem}[section]
\newtheorem{proposition}[theorem]{Proposition}
\newtheorem{corollary}[theorem]{Corollary}
\newtheorem{lemma}[theorem]{Lemma}
\newtheorem{teointro}{Theorem}
\newtheorem{corointro}[teointro]{Corollary}
\theoremstyle{definition}
\newtheorem{definition}[theorem]{Definition}
\newtheorem{notation}[theorem]{Notation}
\theoremstyle{remark}
\newtheorem{remark}[theorem]{Remark}
\newtheorem{example}[theorem]{Example}
\title[Pluriclosed flow]{The long-time behavior of the homogeneous pluriclosed flow}
\author{Romina M.~Arroyo}
\address{FaMAF $\&$ CIEM, Universidad Nacional de C\'ordoba, C\'ordoba, Argentina}
\email{arroyo@famaf.unc.edu.ar}
\author{Ramiro A.~Lafuente}
\address{Mathematisches Institut, Universit\"at M\"unster, Einsteinstr.~ 62, 48149 M\"unster, Germany}
\email{lafuente@uni-muenster.de}
\thanks{The first author was partially supported by grants from CONICET, FONCYT and SeCyT (Universidad Nacional de C\'ordoba).}
\thanks{The second author was supported by the Alexander von Humboldt Foundation.}
\begin{document}

\begin{abstract}
We study the asymptotic behavior of the pluriclosed flow in the case of left-invariant Hermitian structures on Lie groups. We prove that solutions on $2$-step nilpotent Lie groups and on almost-abelian Lie groups converge, after a suitable normalization, to self-similar solutions of the flow. Given that the spaces are solvmanifolds, an unexpected feature is that some of the limits are shrinking solitons. We also exhibit the first example of a homogeneous manifold on which a geometric flow has some solutions with finite extinction time and some that exist for all positive times.
\end{abstract}

\maketitle

\tableofcontents
%\vspace{-13pt}

\section{Introduction}

The great success of the Ricci flow in Riemannian geometry suggests the idea of looking for a natural counterpart in the realm of Hermitian geometry. Motivated by this, in \cite{ST11} the authors introduced a family of parabolic equations called `Hermitian curvature flows'. In this article we are concerned with one of them, namely the so called \emph{pluriclosed flow}: an evolution equation for a family $\omega(t)$ of Hermitian metrics on a complex manifold $(M^{2n}, J)$ which satisfy the \emph{pluriclosed}  condition $\partial \bar \partial \omega = 0$ (also called \emph{SKT metrics}, short for `Strong K\"ahler with torsion'). The family $\omega(t)$ is called a pluriclosed flow solution if it solves
\[
  \frac{\partial}{\partial t} \omega = - \big( \rho^B(\omega)\big)^{1,1}, \qquad \omega(0) = \omega_0.
\]
Here $ \big( \rho^B(\omega)\big)^{1,1}$  denotes the $(1,1)$-part of the
% {\alert is there only one ?}
Ricci form $\rho^B(\omega)$ corresponding to the Bismut connection $\nabla^B$ of $(M^{2n}, J, \omega)$  ---the unique connection making $J$ and $g$ parallel and having totally skew-symmetric torsion---.  Along the article we will use  both $g$ and $\omega(\cdot, \cdot) = g(J \cdot , \cdot)$ to denote a Hermitian metric. If $\omega_0$ is not Kähler, a key point in considering this connection instead of the Levi-Civita connection of $g$ is that the complex structure and the pluriclosed condition are preserved along the flow.

In recent years the pluriclosed flow has been an active subject of study, and already many regularity and convergence results have been established \cite{Str13,Str16}, as well as connections with generalized K\"ahler geometry \cite{ST12}. Remarkably, still not much is known in the homogeneous case. We say a Hermitian manifold $(M^{2n}, J, \omega)$ is \emph{homogeneous}, if there is a real Lie group $\G$ acting transitively on it by isometric biholomorphisms. Previous results in this regard include a complete description of the long-time behavior of the flow on locally homogeneous compact complex surfaces \cite{Boling}, long-time existence on $2$-step nilmanifolds of arbitrary dimension \cite{EFV15}, and on two $6$-dimensional solvmanifolds \cite{FV15}.

It was noticed in \cite{EFV15} that pluriclosed flow solutions on $2$-step nilmanifolds become more and more flat  as $t\to \infty$. It is thus natural to ask what is their asymptotic behavior after an appropriate normalization. In this direction, our first main result is

\begin{teointro}\label{mainthm_nil}
Let $(\N, J)$ be a simply-connected, $2$-step nilpotent Lie group with left-invariant complex structure $J$, and let $(g(t))_{t\in [0,\infty)}$ be a pluriclosed flow solution of left-invariant pluriclosed metrics on $(\N, J)$. Then, the rescaled metrics $(1+t)^{-1} \cdot g(t)$ converge in the Cheeger-Gromov sense  to a non-flat, left-invariant, pluriclosed soliton $(\bar \N, \bar J, \bar g)$, as $t\to\infty$ .
\end{teointro}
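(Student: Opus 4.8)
The plan is to recast the problem as a dynamical system on the variety of Lie brackets and analyze the induced flow on a sphere. Fix the real vector space $V = \RR^{2n}$ with the standard inner product $\ip$ and a compatible complex structure $J$, and encode a left-invariant structure as a point $\mu \in \Lambda^2 V^* \otimes V$, namely the Lie bracket. By the standard gauge-equivalence for homogeneous geometric flows, the pluriclosed flow $\partial_t \omega = -(\rho^B(\omega))^{1,1}$ is equivalent, up to a time-dependent curve of automorphisms, to the \emph{bracket flow}
\[
  \ddt \mu = -\pi\big(P_\mu\big)\,\mu,
\]
where $P_\mu \in \End(V)$ is the symmetric endomorphism representing the $(1,1)$-part of the Bismut--Ricci form of $(V, J, \ip, \mu)$, and $\pi(A)\mu := A\mu(\cdot,\cdot) - \mu(A\cdot,\cdot) - \mu(\cdot,A\cdot)$ is the natural $\glg(V)$-action. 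The metrics $g(t)$ and the brackets $\mu(t)$ are then equivariantly isometric up to pullback by automorphisms, so they define the same Riemannian manifolds and share the same Cheeger--Gromov limits; it therefore suffices to understand the asymptotics of $\mu(t)$.

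First I would exploit the $2$-step nilpotent condition to control $P_\mu$. Writing $\ngo = \vg \oplus \zg$ with $[\ngo,\ngo]\subseteq \zg$ central, the Bismut--Ricci endomorphism becomes algebraically explicit, and one can read off that the relevant curvature quantities are comparable to $\|\mu\|^2$. This should yield two-sided control $c_1(1+t)^{-1} \le \|\mu(t)\|^2 \le c_2(1+t)^{-1}$ with $c_1>0$ along the flow; here the lower bound, which prevents the normalized structure from degenerating to the flat metric, is the nontrivial half. This rate is exactly the one matched by the normalization $(1+t)^{-1}\cdot g(t)$: rescaling the metric by $(1+t)^{-1}$ corresponds to rescaling the bracket by $(1+t)^{1/2}$, so the normalized bracket $\lambda(t) := (1+t)^{1/2}\mu(t)$ stays bounded and, thanks to $c_1>0$, bounded away from $0$.

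Next I would pass to the normalized bracket flow, reparametrizing time so that $\lambda$ evolves on a fixed sphere $\{\|\lambda\| = \text{const}\}$. The goal is to show that $\lambda$ converges to a single limit $\bar\mu \neq 0$, which is then a pluriclosed algebraic soliton, characterized by $\pi(P_{\bar\mu})\bar\mu = c\,\bar\mu$ for some $c\in\RR$. I expect the cleanest route to be a monotonicity argument: the normalized flow is, up to the $\Aut$-gauge, the negative gradient flow of a natural curvature functional (the pluriclosed analogue of $\mu \mapsto \scalar(\mu)/\|\mu\|^2$) restricted to the sphere, whose critical points are precisely the algebraic solitons. Combining the resulting monotonicity with the moment-map (real GIT) stratification of $\Lambda^2 V^* \otimes V$ should pin down the $\omega$-limit set and identify it with soliton critical points; non-flatness of $\bar\mu$ then follows from the lower bound $\|\lambda\| \ge \delta > 0$ established above. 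The final step is routine: convergence of brackets $\lambda \to \bar\mu$ with $\bar\mu \neq 0$ upgrades to Cheeger--Gromov convergence of the associated simply-connected homogeneous manifolds $(\bar\N, \bar J, \bar g)$, by continuity of the homogeneous metric under bracket convergence; note that $\bar\N$ need not be isomorphic to $\N$, since $\bar\mu$ may be a degeneration of $\mu_0$.

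The main obstacle is the convergence-to-a-single-limit step. A priori the monotonicity only yields subconvergence to the critical set, and one must rule out that $\lambda$ drifts along it. Here the special algebraic structure of $2$-step nilpotent brackets---together with the fact that the soliton critical points organize into well-separated $\Aut$-orbits within each stratum---should supply the rigidity needed to promote subconvergence to genuine convergence, either via a \L ojasiewicz-type inequality or via direct control of the $\Aut$-orbit of $\bar\mu$.
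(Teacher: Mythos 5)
Your proposal follows essentially the same route as the paper: recast the flow as the bracket flow, recognize the norm-normalized flow as the negative gradient flow of the real-analytic functional $F(\nu)=\Vert \mmm_{\Gl(\vg,J)}(\nu)\Vert^2 = 16\Vert P_\nu\Vert^2/\Vert\nu\Vert^4$ (Corollary \ref{cor_BFgradf}), apply {\L}ojasiewicz's theorem to promote subconvergence to convergence to a fixed point (an algebraic soliton), establish $\Vert\mu(t)\Vert^2\sim t^{-1}$ by two-sided comparison from $\ddt\Vert\mu\Vert^2=-8\Vert P_\mu\Vert^2$, and upgrade bracket convergence to Cheeger--Gromov convergence. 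The only cosmetic differences are that the paper gets non-flatness from the moment-map identity $\tr P_{\nu_\infty}=-\unc\neq 0$ rather than from a norm lower bound, and that no GIT stratification or $\Aut$-orbit rigidity is needed---the {\L}ojasiewicz argument you mention as one option is exactly what closes the ``single limit'' gap.
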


A \emph{pluriclosed soliton} is a pluriclosed metric $\bar g$ whose pluriclosed flow evolution is given only by scaling and the action of time-depenedent bi-holomoprhisms --- in other words, a self-similar solution of the flow. The limit group $\bar \N$ in Theorem \ref{mainthm_nil} is again $2$-step nilpotent and simply-connected, but may be non-isomorphic to $\N$. Cheeger-Gromov convergence here means that for any sequence of times there exists a subsequence $(t_k)_{k\in \NN}$ for which the corresponding Hermitian metrics converge in the following sense: there exist maps $\varphi_k : \Omega_k \subset (\bar \N, \bar J) \to (\N, J)$ with $\varphi_k(\bar e) = e$ ($\bar e$ and $e$ being the identity elements in $\bar \N$ and $\N$, respectively) which are bi-holomoprhisms onto its images, defined on open subsets $\Omega_k$ that exhaust $\bar \N$, and such that $\varphi_k^* g(t_k) \to \bar g$ as $k\to\infty$, in $C^\infty$ topology uniformly over compact subsets.

The next question we are interested in is what happens on solvable Lie groups. More precisely, which left-invariant metrics are pluriclosed? And among those, what is the behavior of the pluriclosed flow? We give a complete answer to these questions in the case of \emph{almost-abelian} solvable Lie groups. A Lie group is called almost-abelian if its Lie algebra has a codimension-one abelian ideal. In spite of having a simple Lie-theoretical description, its compact quotients provide a rich family of examples of compact complex manifolds, including for instance hyperelliptic surfaces, Inoue surfaces of type $S^0$, primary Kodaira surfaces (these are also quotients of $2$-step nilpotent Lie groups), and Fino and Tomassini's $6$-dimensional solvmanifold admitting a generalized K\"ahler structure but no K\"ahler structures \cite{FT09}, just to name a few.

To describe our results in this direction we need to introduce first some notation. A left-invariant Hermitian structure $(J,g)$ on a connected Lie group $\G$ is determined by the corresponding tensors on its Lie algebra $\ggo$. If $\G$ is almost-abelian, there is a $g$-orthonormal basis $\bca := \{e_1, \ldots, e_{2n} \}$ for $\ggo$ such that $\{e_1, \ldots, e_{2n-1} \}$ spans the codimension-one abelian ideal $\ngo$. In this way, $(\G, J, g)$ is determined by the real $(2n-1)\times (2n-1)$ matrix corresponding to $\ad(e_{2n})|_{\ngo}$ via $\bca$:
\begin{equation}
  (\ad e_{2n})|_{\ngo} = \twomatrix{a}{0}{v}{A},\qquad a\in \RR, \quad v\in \RR^{2n-2}, \quad A \in \RR^{(2n-2)\times (2n-2)},
\end{equation}
where the blocks correspond to the subspaces spanned by $e_1$ and $ \{e_2,\ldots,e_{2n-1}\}$. Moreover, $A$ commutes with the restriction of $J$ to $\RR^{2n-2}$. This fact and the $0$ in the upper-right block are due to the integrability of $J$ (Lemma \ref{lem_integrmuA}). Accordingly, we denote by $g_{a,v,A}$ (or $\omega_{a,v,A}$) the corresponding left-invariant Hermitian metric on $(\G,J)$. The reader is referred to Section \ref{sec_almostabelian} for further details.

Our second main result characterizes the pluriclosed condition $\partial \bar \partial \, \omega_{a,v,A} = 0$ in terms of the algebraic data $a, v$ and $A$:

\begin{teointro}\label{mainthm_sktaa}
A left-invariant Hermitian structure $(J, g_{a,v,A})$ on an almost-abelian Lie group $\G$ is pluriclosed if and only if $A$ is a normal matrix that commutes with the complex structure, and its eigenvalues have real part equal to $0$ or $-a/2$.
\end{teointro}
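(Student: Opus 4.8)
The plan is to reduce the second-order condition $\partial\bar\partial\omega=0$ (equivalently $dd^c\omega=0$) to a single algebraic identity for $A$ by an explicit Chevalley--Eilenberg computation, and then to unpack that identity into the two stated conditions by an eigenspace argument.

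First I would fix the algebraic setup. Since $J$ preserves both the inner product and, by Lemma \ref{lem_integrmuA}, the $J$-invariant subspace $\vspan\{e_2,\dots,e_{2n-1}\}=:W$, it also preserves $W^\perp=\vspan\{e_1,e_{2n}\}$; on the latter we may assume $Je_1=e_{2n}$. Writing $\omega_W$ for the restriction of $\omega$ to $W$, the fundamental form splits with no cross terms as $\omega=e^1\wedge e^{2n}+\omega_W$. The only nonzero brackets are the $[e_{2n},e_k]$, encoded by $M=\twomatrix{a}{0}{v}{A}$, so the differential satisfies $de^{2n}=0$, $de^1=-a\,e^{2n}\wedge e^1$ (where the upper-right $0$ block of $M$ is used), and, for any $\sigma$ in the exterior algebra of $\ngo^*$, $d\sigma=-e^{2n}\wedge\delta\sigma$, with $\delta$ the derivation extending $\xi\mapsto\xi\circ M$ on $\ngo^*$. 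I record the two facts organizing everything: $[A,J|_W]=0$ (again Lemma \ref{lem_integrmuA}, so ``commutes with $J$'' is already forced by integrability), and $J^*e^{2n}=e^1$, $J^*e^1=-e^{2n}$.

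Next I would compute $d\omega$, then $d^c\omega$ (using $d^c\omega(X,Y,Z)=-d\omega(JX,JY,JZ)$), and finally $dd^c\omega$. One checks $d(e^1\wedge e^{2n})=0$, so $d\omega=d\omega_W=-e^{2n}\wedge\big(L_A\omega_W+e^1\wedge\iota_v\omega_W\big)$, where $L_A$ is the derivation of $\bigwedge^\bullet W^*$ extending $A^*$. Applying $J^*$, using that $L_A\omega_W$ is $J^*$-invariant (a consequence of $[A,J|_W]=0$ together with $J^*\omega_W=\omega_W$), and then applying $d$ once more, the terms containing $v$ and the $e^1\wedge e^{2n}$ terms all drop out because of a repeated factor $e^{2n}$, leaving
\[
  dd^c\omega=-\,e^{2n}\wedge e^1\wedge\big(a\,L_A\omega_W+L_A^2\omega_W\big).
\]
Thus $(J,g_{a,v,A})$ is pluriclosed if and only if $a\,L_A\omega_W+L_A^2\omega_W=0$; in particular the condition does not involve $v$, as the theorem asserts.

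It remains to put this in matrix form and decode it. Setting $S=\tfrac12(A+A^T)$, $K=\tfrac12(A-A^T)$ and using $[A,J|_W]=0$, a short computation gives $L_A\omega_W(X,Y)=g\big((A+A^T)JX,Y\big)=2\,g(SJX,Y)$ and, iterating, $L_A^2\omega_W(X,Y)=2\,g\big((SA+A^TS)JX,Y\big)$. Since $J$ is invertible, the pluriclosed condition becomes the single symmetric-matrix equation
\[
  SA+A^TS+aS=0,\qquad\text{equivalently}\qquad 2S^2+[S,K]=-aS .
\]
The final and most delicate step is to see that this one equation is equivalent to the conjunction ``$A$ normal and the eigenvalues of $A$ have real part in $\{0,-a/2\}$''. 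For this I would decompose $W=\bigoplus_\lambda V_\lambda$ into eigenspaces of the symmetric operator $S$ and pair the equation with $x\in V_\lambda$, $y\in V_\mu$. On the diagonal $\lambda=\mu$ it forces $2\lambda^2+a\lambda=0$, i.e. $\lambda\in\{0,-a/2\}$ --- the eigenvalue condition, since the real parts of the eigenvalues of $A$ coincide with the eigenvalues of $S$; off the diagonal $\lambda\ne\mu$ it forces $\langle Kx,y\rangle=0$, i.e. $K$ preserves each $V_\lambda$, which is exactly $[S,K]=0$, i.e. $A$ normal. The converse is immediate: normality gives $[S,K]=0$ and the eigenvalue condition gives $2S^2=-aS$, so the displayed equation holds. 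The main obstacles I anticipate are bookkeeping the signs and the repeated-$e^{2n}$ cancellations in the $dd^c$ computation (making sure $v$ genuinely disappears), and recognizing that the lone symmetric equation secretly encodes both stated conditions through its diagonal and off-diagonal blocks.
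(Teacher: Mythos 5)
Your proof is correct, but the way you extract the two stated conditions from the algebraic identity is genuinely different from the paper's. The first half --- reducing pluriclosedness to a single matrix equation --- is essentially the paper's Lemma \ref{lem_SKT}: the paper computes the torsion form $c=-d^c\omega$ and its differential via \eqref{eqn_formulac}--\eqref{eqn_formuladc}, while you compute $dd^c\omega$ directly by Chevalley--Eilenberg; either way one lands on $aS+SA+A^tS=0$, which is exactly the paper's condition $aA+A^2+A^tA\in\sog(\ngo_1)$, since $aS(A)+S(A)A+A^tS(A)=S(aA+A^2+A^tA)$. The second half is where you diverge. The paper first proves the eigenvalue statement for arbitrary, possibly non-normal, $A$ by a complexification argument (Lemma \ref{lem_realpart}), and then obtains normality by taking traces to get $\Vert S(A)\Vert^2=\unm k\,a^2=\sum\Re(\lambda_i)^2$ and invoking the equality case of $\Vert S(E)\Vert^2\geq\sum\Re(\lambda_i)^2$ (Corollary \ref{cor_app}), whose proof in the appendix uses a gradient flow and {\L}ojasiewicz's theorem. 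You instead rewrite the equation as $2S^2+[S,K]+aS=0$ and decompose $\ngo_1$ into $S$-eigenspaces: the diagonal blocks give $\Spec(S)\subset\{0,-a/2\}$ and the off-diagonal blocks give $[S,K]=0$, i.e.\ normality --- both conditions at once, with no complexification and no appendix, so your route is more elementary and self-contained (at the price of not producing the paper's intermediate Lemma \ref{lem_realpart}, which is valid without normality). One point of care in your write-up: the parenthetical justification ``the real parts of the eigenvalues of $A$ coincide with the eigenvalues of $S$'' is false for general $A$ (take a nilpotent Jordan block, whose eigenvalues are $0$ while $S$ has nonzero eigenvalues); it becomes valid only after normality is known. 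Since the off-diagonal part of the very same equation delivers $[S,K]=0$, your deduction is sound, but it should be ordered explicitly: first conclude normality, then $\Spec(S)\subset\{0,-a/2\}$, and only then convert the latter into the condition on the real parts of the eigenvalues of $A$ via the orthogonal block-diagonalization of the real normal operator $A$ --- the same fact is also what makes your converse direction ($2S^2=-aS$ from the eigenvalue hypothesis) legitimate.
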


Recall that a matrix is called \emph{normal} if it commutes with its transpose. Theorem \ref{mainthm_sktaa} yields examples of pluriclosed metrics on simply-connected solvable Lie groups in arbitrarily high dimensions. It is worthwhile mentioning that many of them admit cocompact lattices, see e.g.~ \cite{FT09}.

Left-invariant pluriclosed Hermitian structures on Lie groups (and its compact quotients) were also studied by other authors. The classification for $4$-dimensional Lie groups was obtained in \cite{MadsenSwann}. In dimension $6$, all nilpotent examples were classified in \cite{FinoPartonSalamon} (see also \cite{Ugarte07}), and some partial results on the solvable case are given in \cite{FinoOtalUgarte}. Regarding higher dimensions, besides some non-existence results proved in \cite{EFV12}, \cite{FinoKasuyaVezzoni15} for nilpotent and solvable Lie groups respectively, to the best of our knowledge no general existence results were previously known.

% We address this problem within almost-abelian Lie groups in Section \ref{sec_almostabelian}, see Theorem \ref{thm_SKT}.

In our third main result we describe the long-time behavior of the pluriclosed flow on almost-abelian Lie groups. We refer the reader to Theorem \ref{thm_aa} for a more precise statement.

\begin{teointro}\label{mainthm_aa}
Let $(g(t))_{t\in[0,T)}$ be a maximal pluriclosed flow solution of left-invariant pluriclosed metrics on a simply-connected, almost-abelian Lie group with left-invariant complex structure $(\G,J)$. If $\G$ is unimodular, then the flow exists for all positive times ($T=\infty$). On the other hand, for non-unimodular $\G$ there exist examples with finite extinction time ($T<\infty$). In any case, suitably normalized solutions converge in the Cheeger-Gromov sense to pluriclosed solitons.
\end{teointro}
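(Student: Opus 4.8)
The plan is to convert the flow into a finite-dimensional dynamical system and analyze it using the normal form of Theorem \ref{mainthm_sktaa}. Fixing $\ggo = \RR^{2n}$ together with its inner product and complex structure $J$, I would first pass to the bracket flow: the pluriclosed flow of left-invariant metrics is equivalent, up to pullback by time-dependent biholomorphisms, to an ODE of the form $\tfrac{\dd}{\dd t}\mu = \delta_\mu(P_\mu)$ on the variety of Lie brackets, where $P_\mu$ is the symmetric endomorphism defined (up to normalization) by $(\rho^B)^{1,1} = g(P_\mu \cdot, \cdot)$ and $\delta_\mu$ is the natural $\End(\ggo)$-action on brackets. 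Because $J$ and the pluriclosed condition are preserved, and because $P_\mu$ respects the $J$-adapted splitting underlying the block form $(\ad e_{2n})|_\ngo = \twomatrix{a}{0}{v}{A}$, the almost-abelian normal form persists; the flow thus descends to an ODE for $(a(t), v(t), A(t))$ confined to the locus of Theorem \ref{mainthm_sktaa}. This is the precise statement I would record as Theorem \ref{thm_aa}.

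Next I would make this ODE explicit by computing $(\rho^B)^{1,1}$ for $g_{a,v,A}$. Since the bracket is carried entirely by $\ad e_{2n}$, the Bismut torsion and curvature depend only on $a$, $v$, and the symmetric and skew parts of $A$; using $[A,J]=0$ and normality to put $A$ and $J$ simultaneously into standard $2\times 2$ blocks, the system decouples into a few scalar equations that can be integrated. I expect the imaginary parts of the eigenvalues of $A$ to be constant, while $a$, the real parts (valued in $\{0,-a/2\}$), and $|v|$ carry the dynamics. In the unimodular case $a + \tr A = 0$, the eigenvalue constraint controls the growth of $|\mu(t)|$ and prevents finite-time blow-up, so the solution extends to $[0,\infty)$. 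For non-unimodular $\G$ I would exhibit an explicit instance with $a \neq 0$ for which $P_\mu$ stays positive-definite along the trajectory, forcing a finite-time singularity at some $T<\infty$ where the metric collapses; since the same group also carries pluriclosed data whose solution is immortal, one fixed homogeneous space displays both behaviors.

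Finally, I would normalize --- by $1+t$ in the immortal case and by the blow-up factor $T-t$ near a finite-time singularity --- and study the resulting autonomous system. Its rest points are exactly the algebraic pluriclosed solitons, characterized by $P_\mu = c\,\Id + \tfrac12(D + D^t)$ for some $D \in \Der(\mu)$, and the sign of $c$ distinguishes expanding, steady, and shrinking limits. The eigenvalue constraint of Theorem \ref{mainthm_sktaa} is precisely what permits $c<0$, producing the unexpected shrinking solitons on these solvmanifolds. Cheeger-Gromov convergence of the metrics then follows from convergence of the normalized brackets via the standard dictionary between bracket convergence and Cheeger-Gromov convergence.

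The step I expect to be the main obstacle is this last one: upgrading subconvergence of the normalized bracket flow to convergence of the full trajectory while simultaneously determining the sign of $c$. In the absence of a Lyapunov function tailored to the Bismut (rather than the Chern or Levi-Civita) Ricci form, I would lean on the explicit integrability obtained in the second step to solve the normalized ODE directly, which both forces full convergence and identifies each limit soliton.
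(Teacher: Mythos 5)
Your overall strategy (bracket flow, reduction to an ODE in $(a,v,A)$, identification of rest points with algebraic solitons, Cheeger--Gromov convergence from bracket convergence) is the same as the paper's, but there is a genuine gap at the crucial reduction step: it is \emph{not} true that $P_\mu$ respects the splitting $\ggo = \RR e_1 \oplus \ngo_1 \oplus \RR e_{2n}$, and consequently the plain bracket flow $\ddt \mu = -\pi(P_\mu)\mu$ does \emph{not} preserve the locus of brackets of the form $\mu(a,v,A)$. Indeed, the paper computes (Lemma \ref{lem_Pmuaa}) that
\[
P_{\mu} = \threematrix{c}{w^t}{0}{w}{0}{J_1 w}{0}{-w^t J_1}{c}, \qquad w = -\unc\, A^t v,
\]
so whenever $A^t v \neq 0$ the endomorphism $P_\mu$ maps $\ngo_1$ outside of $\ngo$ (it produces an $e_{2n}$-component), and the codimension-one abelian ideal of $\mu(t)$ drifts away from the fixed $\ngo$: your ODE for $(a(t),v(t),A(t))$ does not close up as written. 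The paper's remedy --- and this is precisely the technical idea your proposal is missing --- is the \emph{gauged} bracket flow of Theorem \ref{thm_gaugedBF}: since unitary modifications of the bracket yield the same Hermitian manifold (Remark \ref{rmk_gauge}), one may subtract a carefully chosen $U_\mu \in \ug(\ggo,J)$, whose off-diagonal blocks are built exactly from $w$, so that $P_\mu - U_\mu$ preserves $\ngo$ and $\ngo_1$. Only after this gauging does the flow become the ODE system \eqref{eqn_gbfaa}, in which moreover $a' = c\,a$ and $A' = c\,A$, so $a$ and $A$ evolve by a common scaling; note in particular that the imaginary parts of the eigenvalues of $A$ are not constant along the unnormalized flow, contrary to your expectation, although $A$ does freeze under the natural normalization.

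A second, more minor, problem: your proposed mechanism for finite extinction time (``an instance for which $P_\mu$ stays positive-definite'') cannot occur, since $\la P_\mu u, u\ra = 0$ for every $u \in \ngo_1$ (the middle diagonal block of $P_\mu$ vanishes), so $P_\mu$ is never positive definite in dimension $\geq 4$. What actually forces $T<\infty$ in the paper (case $k>2$, $v_0 \in \Im A_0$ of Theorem \ref{thm_aa}) is that the normalized analysis gives $c/a^2 \to \tilde c_\infty > 0$, whence $\ddt a^2 = 2\,c\,a^2 \geq 2\epsilon\, a^4$ blows up in finite time. Your remaining steps do align with the paper: in the normalization keeping $a^2 + \Vert A\Vert^2$ constant, the system reduces to $\tilde v' = S\tilde v - \unm \Vert \tilde v \Vert^2 \tilde v$ with $S$ a constant symmetric map, which is analyzed eigencomponent by eigencomponent (this is the ``explicit integrability'' you hoped for), and its limits are algebraic solitons by Proposition \ref{prop_solitonfixedpoint}.
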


A connected Lie group $\G$ is called \emph{unimodular} if all the adjoint maps of its Lie algebra are traceless. Let us mention here that the limit solitons in Theorems \ref{mainthm_nil} and \ref{mainthm_aa} are in fact \emph{algebraic solitons}, that is, the bi-holomporhisms giving the corresponding flow evolution are also Lie group automorphisms, see Definition \ref{def_soliton} and Proposition \ref{prop_solitonevol}. Another interesting feature is that for  $\G$ unimodular, the limit Hermitian structure is compatible with a \emph{generalized K\"ahler structure} (see Section \ref{sec_SKTaa} for the definition).

\begin{corointro}
There exist a simply-connected, almost-abelian Lie group $(\G, J)$ with left-invariant complex structure $J$, on which the pluriclosed flow of left-invariant metrics has some solutions with finite extinction time and some that exist for all positive times.
\end{corointro}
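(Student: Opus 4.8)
The plan is to deduce the statement from Theorem~\ref{mainthm_aa}, and more precisely from its sharp form Theorem~\ref{thm_aa}, by producing a single non-unimodular almost-abelian pair $(\G,J)$ that carries two left-invariant pluriclosed metrics whose flows behave differently. Recall from Theorem~\ref{mainthm_sktaa} that $(J,g_{a,v,A})$ is pluriclosed exactly when $A$ is normal, commutes with $J$, and $\Spec(A)\subset\{\Re z=0\}\cup\{\Re z=-a/2\}$, with \emph{no} restriction on the off-diagonal vector $v$. Since the block form $\ad(e_{2n})|_{\ngo}=\twomatrix{a}{0}{v}{A}$ is only read off from the $g$-orthonormal frame adapted to $J$, while the underlying operator $\ad(e_{2n})|_{\ngo}$ is fixed up to positive scaling by the Lie algebra $\ggo$, the spectrum of $\ad(e_{2n})|_{\ngo}$---hence the collection $\{a\}\cup\Spec(A)$---is rigid up to a common positive scale on a fixed $(\G,J)$; the genuine freedom among pluriclosed metrics lies in $v$ and in the choice of adapted frame. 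This already shows that the dichotomy we seek cannot be produced by the mean curvature $a+\tr A=\tr\ad(e_{2n})|_{\ngo}$, whose sign is scale-invariant and hence the same for every metric on the group, and must instead be produced by the metric itself.

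First I would reduce the flow to a finite-dimensional ODE. The proof of Theorem~\ref{thm_aa} shows that the almost-abelian pluriclosed class is preserved, so the evolution is an ODE for $(a(t),v(t),A(t))$ driven by the explicit expression for $(\rho^B)^{1,1}$ in terms of $M=\twomatrix{a}{0}{v}{A}$ derived while proving Theorem~\ref{mainthm_sktaa}. I would then single out the scalar $\operatorname{tr}_\omega(\rho^B)^{1,1}$, whose sign controls the contraction of the metric: where it is positive, I expect a metric coefficient $\lambda$ to satisfy $\tfrac{\dd}{\dd t}\lambda\le -c<0$, forcing $\lambda$---and with it the metric---to degenerate at a finite time $T$; where it is non-positive, a~priori bounds keep the solution eternal. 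The crucial point is that, unlike $a+\tr A$, this scalar is not scale-rigid on a fixed group: it depends on $v$ and on the relative weight of the $e_1=Je_{2n}$ direction, so its sign can be flipped by changing the initial metric.

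Second I would fix the example. Take $\ngo$ abelian with $Je_{2n}=e_1$ and $A$ skew-symmetric commuting with $J$ (so $\Spec(A)\subset i\RR$, i.e.\ all real parts equal to $0$) and $a\neq0$; then $\tr A=0$, hence $a+\tr A=a\neq0$ and $(\G,J)$ is non-unimodular, while $(J,g_{a,v,A})$ is pluriclosed by Theorem~\ref{mainthm_sktaa}. Using the freedom in $v$ and in the weight of the $Je_{2n}$ direction, I would exhibit one initial pluriclosed metric along which $\operatorname{tr}_\omega(\rho^B)^{1,1}$ stays positive, so that the flow collapses in finite time with a shrinking soliton as normalized limit (the phenomenon highlighted in the abstract), and a second one for which the flow exists for all $t\ge 0$, its normalized limit being a non-shrinking (steady or expanding) soliton as in Theorem~\ref{mainthm_aa}.

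The hard part will be checking that both metrics genuinely live on the \emph{same} group $(\G,J)$, rather than merely on groups of the same algebraic type: one must verify that the two inner products induce the \emph{same} operator $\ad(e_{2n})|_{\ngo}$ up to automorphism and positive scaling, and simultaneously derive the two opposite analytic conclusions---finite-time degeneration in one case and uniform curvature bounds ruling out any singularity in the other---from the same ODE. Tracking how $(a,v,A)$ and the normalized volume evolve, and pinning down the sign of $\operatorname{tr}_\omega(\rho^B)^{1,1}$ along each solution, is where essentially all of the computation will be concentrated.
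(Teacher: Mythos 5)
Your overall strategy---fix one almost-abelian group, exploit that the metric-dependent freedom sits in $v$ (and the adapted frame) while the spectrum of $\ad(e_{2n})|_{\ngo}$ is rigid up to scale, and read off the dichotomy from the ODE of Theorem \ref{thm_aa}---is the same as the paper's, which proves the corollary via the explicit Example \ref{exa_shrinksteady}. But your concrete example cannot work, and this is not a technicality. You take $A$ skew-symmetric, so all eigenvalues of $A$ have real part $0$; in the paper's notation this means $2k=\rank(A+A^t)=0$. By Lemma \ref{lem_Pmuaa}, the scalar you propose to control, $\tr_\omega \big(\rho^B\big)^{1,1}$, is a positive multiple of $c=(\tfrac{k}{4}-\unm)a^2-\unm\Vert v\Vert^2$, which for $k=0$ equals $-\unm a^2-\unm\Vert v\Vert^2<0$ for \emph{every} pluriclosed metric on your group: changing $v$ or the weight of the $Je_{2n}$-direction only makes it more negative, and rescaling does not affect its sign. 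So the sign cannot be ``flipped by changing the initial metric'', and indeed Theorem \ref{thm_aa} (case (ii) of Table \ref{tab_almostabelian}) shows that on your group every left-invariant pluriclosed solution is immortal; no initial metric there has finite extinction time. More generally the same obstruction holds whenever $k\leq 2$.

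To realize both behaviors one must be in the regime $k>2$, i.e.\ $a\neq 0$ and $A$ has at least six eigenvalues (with multiplicity) of real part $-a/2$; only then can $c$ be positive. Moreover, for $k>2$ the correct dichotomy is not the initial sign of $c$ but whether $v_0\in\Im A_0$ (finite extinction time, case (vi)) or $v_0\notin\Im A_0$ (immortal, case (v)): for instance $v_0\in\Im A_0$ with $\Vert v_0\Vert$ large gives $c(0)<0$, yet the solution still becomes extinct in finite time, because along the normalized flow $c/a^2$ converges to a positive limit. For both alternatives to occur on the same group one also needs $\ker A\neq 0$ (so that vectors $v\notin\Im A$ exist), which together with $\rank(A+A^t)\geq 6$ forces $\dim\ngo_1\geq 8$, i.e.\ total dimension at least $10$. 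This is exactly the paper's Example \ref{exa_shrinksteady}: $a=2$, $A=\twomatrix{-\Id_{\RR^6}}{0}{0}{0}$, where $v=0$ gives a shrinking pluriclosed soliton (finite extinction time) and $v=(0,\ldots,0,1,1)\in\ker A$ gives a steady one (immortal), by Corollary \ref{cor_plurisolitons}. Finally, the point you correctly flag as ``the hard part''---that the two metrics live on the same $(\G,J)$---is left unresolved in your proposal; the paper settles it by invoking Proposition \ref{prop_movingbrackets} together with \cite[Prop.~4.3]{LW17}.
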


As far as we know, this is the first example of a geometric flow on a homogeneous space exhibiting such an unexpected behavior. A concrete example is given in Example \ref{exa_shrinksteady}.

The simply-connectedness assumption in all our main results is justified by two reasons. On one hand, the local geometry of a pluriclosed flow evolution of left-invariant structures on a compact solvmanifold  is completely determined by the behavior of the corresponding pull-back solution on the universal cover, as they are locally isomorphic. On the other hand, the assumption has proved to be extremely useful for realizing the potential limits under smooth convergence, by avoiding collapsing situations (cf.~\cite{Lot07} for the Ricci flow case).

% Discuss compact quotients and all that, and say that inside the paper we work with simply-connected Lie groups.

% \end{remark}

% Say we have new examples of static metrics on Lie groups (check these are in fact new, mainly against \cite{Enr13}).

We turn now to the proofs of our main results. Theorems \ref{mainthm_nil} and  \ref{mainthm_aa} are obtained using Lauret's \emph{bracket flow} approach: the  flow of left-invariant pluriclosed structures is equivalent (up to pull-back by bi-holomorphisms) to an ODE on the variety of Lie algebras. The latter is viewed as an algebraic subset of the vector space $V_{2n} := \Lambda^2 (\RR^{2n})^* \otimes \RR^{2n}$. Resembling the Ricci flow (see \cite{nilRF}), in Theorem \ref{mainthm_nil} the ODE analysis is made possible by a close link with Geometric Invariant Theory applied to the natural $\Gl_{2n}(\RR)$-action on $V_{2n}$: the bracket flow coincides with the negative gradient flow of the norm squared of the moment map corresponding to a certain group action on $V_{2n}$. For Theorem \ref{mainthm_aa}, we follow the ideas in \cite[$\S$3]{BL17} and find an appropriate \emph{gauge} for the bracket flow so that the equation can be written as an ODE for the data $a, v$ and $A$. This ODE simplifies significantly after a suitable normalization. Finally, Theorem \ref{mainthm_sktaa} follows by expressing the pluriclosed condition in terms of $a$ and $A$ (see \eqref{eqn_SKTcondition}) and then applying some linear algebra estimates relating the norm of $A$ and the real part of its eigenvalues.

The organisation of the article is as follows. In Section \ref{sec_prelim} we give the necessary preliminaries on pluriclosed metrics, the bracket flow approach with gauging, and pluriclosed solitons. Section~ \ref{sec_nilmfd} is devoted to the proof of Theorem \ref{mainthm_nil}. Theorems \ref{mainthm_sktaa} and \ref{mainthm_aa} are proved in Section \ref{sec_almostabelian}, and a summarized form of the different types of behaviors of the flow in this case is presented in Table~ \ref{tab_almostabelian}. Finally, Appendix \ref{app} contains some basic linear algebra estimates for which we could not find a reference.

% \vs \noindent {\it Acknowledgements.}

\section{Preliminaries} \label{sec_prelim}

Let $(M^{2n},J,g)$ be a Hermitian manifold with complex structure $J$ and compatible Riemannian metric $g$, and denote by $\omega(\cdot,\cdot)=g(J \cdot,\cdot)$ its fundamental $2$-form. The Bismut connection $\nabla^B$ on $M$ is the unique Hermitian connection ($J$ and $g$ are parallel) with totally skew-symmetric torsion, that is, the tensor $c(X,Y,Z) := g(X, T^B(Y,Z))$ is a $3$-form, where $T^B(Y,Z) = \nabla^B _Y Z - \nabla^B_Z Y - [Y,Z]$ is the torsion of $\nabla^B$ (see \cite{Bis89,Gau97}).

We say $\omega$ (or $g$) is \emph{pluriclosed}, also \emph{strong K\"ahler with torsion} ---SKT for short---, if $\partial \bar \partial \omega =~ 0$. This condition is equivalent to the torsion $3$-form $c$ being closed.  Indeed,  $c = -d^c \omega =  d \omega(J \cdot , J\cdot , J\cdot)$ and $d d^c = 2 \sqrt{-1} \partial \overline \partial$, where $d^c = \sqrt{-1}(\overline \partial - \partial)$ is the real Dolbeault operator.

The \emph{pluriclosed flow}, introduced in \cite{ST10}, is the parabolic flow for SKT metrics defined by
\begin{equation}\label{eqn_pluriflow}
  \frac{\partial}{\partial t} \omega = - \left( \rho^B\right)^{1,1}, \qquad \omega(0) = \omega_0.
\end{equation}
Here, $(\rho^B)^{1,1}(\cdot, \cdot) = \unm( \rho^B(\cdot, \cdot) + \rho^B(J \cdot , J \cdot) )$ denotes the $(1,1)$-part of the Bismut-Ricci form $\rho^B = \rho^B(\omega)$, defined by
\begin{equation}\label{eqn_defBismutRic}
  \rho^B(X,Y)=-\sum _{k=1}^n g(R^B(X,Y)e_k,J e_k),
\end{equation} where $R^B(X,Y) = [\nabla^B_X,\nabla^B_Y] - \nabla^B_{[X,Y]}$ is the curvature tensor of $\nabla^B$ and $\{e_k, Je_k\}$ is a local $g$-orthonormal real frame.
It is an analog of the K\"ahler-Ricci flow in the non-K\"ahler setting, and it preserves the SKT condition. Moreover, if the initial condition $\omega_0$ is K\"ahler both flows coincide, see \cite{ST10}.

\subsection{SKT metrics on Lie groups}

We call a Hermitian manifold $(M^{2n},J,g)$ \emph{left-invariant} if its universal cover $\tilde M$ is diffeomorphic to a simply-connected Lie group $\G$, and $\pi^* J$, $\pi^* g$ are left-invariant tensors defining a Hermitian structure on $\G$, where $\pi : \G \to M$ denotes the covering map. This is for instance the case for invariant Hermitian structures on $M = \Gamma \backslash \G$, where $\Gamma$ is a cocompact discrete subgroup of $\G$.

If we denote the Lie algebra of $\G$ by $\ggo$, then according to \cite[(3.2)]{EFV12} the torsion $3$-form of the Bismut connection of a left-invariant Hermitian manifold can be computed by
\begin{equation}\label{eqn_formulac}
  c(X,Y,Z) = -g([JX, JY], Z) - g([JY,JZ], X) -g([JZ,JX], Y), \qquad X, Y, Z \in \ggo.
\end{equation}
Its exterior derivative is thus given by
\begin{align}\label{eqn_formuladc}
  dc(W,X,Y,Z) =& -c([W,X],Y,Z) + c([W,Y],X,Z) -c([W,Z],X,Y) \\
                &- c([X,Y],W,Z) + c([X,Z],W,Y) - c([Y,Z],W,X). \nonumber
\end{align}
The SKT condition $dc=0$ can thus be written as a system of equations on $\ggo$ involving the Lie bracket and the complex structure.

\subsection{The bracket flow approach to the homogeneous pluriclosed flow}\label{section_BF}

We now briefly introduce the principle of varying brackets instead of geometric structures, which has been applied extensively and quite successfully for studying locally homogeneous geometric structures. In the case of (almost) Hermitian structures, this was developed in detail by Lauret in \cite{Lau15}. See also \cite{LW17}.

Let $(\G, J, g)$ be a $2n$-dimensional simply-connected Lie group with a left-invariant Hermitian structure, and denote its Lie algebra by $\ggo$. This object is uniquely determined by the infinitesimal  data
\[
  J : \ggo \to \ggo,  \qquad \ip : \ggo \times \ggo \to \RR, \qquad \mu : \Lambda^2 \ggo \to \ggo,
\]
where $J$ is a linear endomorphism with $J^2 = -\Id_\ggo$, $\ip$ is a scalar product, $\la J \cdot , J \cdot\ra = \ip$, and $\mu$ denotes the Lie bracket, regarded as an element of the vector space
\[
  \Vg := \Lambda^2\ggo^* \otimes \ggo
\]
which satisfies in addition the Jacobi identity.

In order to emphasize the Lie bracket we will often consider $\ggo$ just as the underlying real vector space, and denote a Lie algebra structure by $(\ggo,\mu)$ or simply $\mu$. In this case, $\G_\mu$ will denote  the simply-connected Lie group  with Lie algebra $(\ggo,\mu)$, and $(J_\mu, g_\mu)$ the left-invariant Hermitian structure on $\G_\mu$ determined by $(J,\ip)$ on $(\ggo,\mu) \simeq T_e \G_\mu$.

The idea of the `varying brackets principle' is as follows: one fixes once and for all $J$, $\ip$ on~ $\ggo$. Any $J$-compatible left-invariant metric $g'$ on $\G$ is determined by a scalar product on $\ggo$ of the form $\ip' = \la h \, \cdot , h \, \cdot\ra$, with $h$ an element of
\[
    \Gl(\ggo, J)  := \{ \phi \in \Gl(\ggo) : [\phi ,J] = 0 \}\simeq \Gl_n(\CC).
\]
In other words, $(\G, J, g')$ is determined by the data $\big(\ggo,\mu, J, \ip' = \la h\cdot, h \cdot \ra \big)$. The linear map
\[
    h : \big(\ggo,\mu, J, \la h\cdot, h \cdot \ra \big) \to \big(\ggo,h\cdot \mu, J, \ip \big)
\]
is at the same time orthogonal, holomorphic, and a Lie algebra isomorphism. Here,
\begin{equation}\label{eqn_actionbrackets}
  (h\cdot \mu)(\cdot, \cdot ) := h \mu(h^{-1} \cdot , h^{-1} \cdot ), \qquad h\in \Gl(\ggo), \qquad \mu \in \Vg,
\end{equation}
denotes the linear `change of basis' action of $\Gl(\ggo)$ on $\Vg$. Finally, the Lie group isomorphism $\varphi : \G = \G_\mu \to \G_{h\cdot \mu}$ determined by $d \varphi |_e = h$ is an equivalence between the Hermitian structures
\[
    \varphi : (\G, J, g')  \to (\G_{h\cdot \mu}, J_{h\cdot \mu}, g_{h\cdot \mu})
\]
defined respectively by the data $\big(\ggo,\mu, J, \la h\cdot, h \cdot \ra \big)$, $\big(\ggo,h\cdot \mu, J, \ip \big)$. Moreover, $\varphi$ is equivariant with respect to the transitive actions of $\G$, $\G_{h\cdot \mu}$ by left multiplication.
 % and it is defined precisely so that the latter holds. The notation
 This yields

\begin{proposition}\label{prop_movingbrackets}
% {\alert citation ok?}
Let $\G$ be a simply-connected Lie group with Lie algebra $(\ggo, \mu)$. After fixing $J$, $\ip$ on $\ggo$, the space of left-invariant Hermitian structures on $\G$ may be parameterized by the orbit $\Gl(\ggo, J) \cdot \mu$ in the space of brackets $\Vg$.
\end{proposition}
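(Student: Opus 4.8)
The plan is to read the statement as a parameterization of the left-invariant $J$-compatible metrics on the fixed group $\G=\G_\mu$, and to reduce everything to one linear-algebra fact, since the construction in the paragraph preceding the statement already supplies all the geometry. Indeed, with $J$ fixed as the complex structure, a left-invariant Hermitian structure on $\G$ is precisely a left-invariant metric compatible with $J$, i.e.\ an inner product $\ip'$ on $\ggo$ with $\la J\cdot,J\cdot\ra'=\ip'$; and that preceding discussion shows that for each $h\in\Gl(\ggo,J)$ the map $h$ is simultaneously orthogonal, holomorphic and a Lie algebra isomorphism $(\ggo,\mu)\to(\ggo,h\cdot\mu)$, which integrates---using that $\G_\mu$ and $\G_{h\cdot\mu}$ are simply-connected---to an isometric biholomorphism $\varphi\colon(\G_\mu,J,g')\to(\G_{h\cdot\mu},J,g)$, where $g'$ is given by $\ip'=\la h\cdot,h\cdot\ra$. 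Thus the single point to verify is that $h\mapsto\la h\cdot,h\cdot\ra$ carries $\Gl(\ggo,J)$ \emph{onto} the set of $J$-compatible inner products.

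First I would prove this surjectivity. Given a $J$-compatible $\ip'$, write $\ip'=\la P\cdot,\cdot\ra$ for the unique $\ip$-self-adjoint, positive-definite operator $P$. Because $J$ is $\ip$-orthogonal with $J^2=-\Id$, its $\ip$-transpose is $-J$, and comparing the compatibility relations $\la J\cdot,J\cdot\ra=\ip$ and $\la J\cdot,J\cdot\ra'=\ip'$ then forces $JPJ=-P$, that is $[P,J]=0$. Taking $h:=P^{1/2}$ by functional calculus, $h$ is again positive-definite and $\ip$-self-adjoint and commutes with $J$, so $h\in\Gl(\ggo,J)$ and $\la h\cdot,h\cdot\ra=\la h^2\cdot,\cdot\ra=\la P\cdot,\cdot\ra=\ip'$, as needed.

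Combining the two, every left-invariant Hermitian structure on $\G$ is equivalent to $(\G_{h\cdot\mu},J,g)$ for some $h\in\Gl(\ggo,J)$, hence corresponds to the bracket $h\cdot\mu$ in the orbit $\Gl(\ggo,J)\cdot\mu$; this is the asserted parameterization. To see it is faithful rather than merely surjective, I would compute the fibers: $h_1\cdot\mu=h_2\cdot\mu$ holds exactly when $h_2^{-1}h_1\in\Aut(\ggo,\mu)\cap\Gl(\ggo,J)$, a holomorphic automorphism of $\G_\mu$, under which the associated metrics $\la h_1\cdot,h_1\cdot\ra$ and $\la h_2\cdot,h_2\cdot\ra$ correspond and thus define equivalent Hermitian structures; so orbit points match $J$-compatible metrics up to automorphism. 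The only genuine obstacle beyond the bookkeeping already prepared before the statement is the square-root lemma of the second paragraph, together with the integration of the Lie algebra isomorphism $h$ to the group isomorphism $\varphi$, which is exactly where simple-connectedness of $\G$ is used.
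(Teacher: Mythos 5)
Your proposal is correct and takes essentially the same route as the paper: there the proposition is presented as an immediate consequence of the paragraph preceding it, whose one unproved ingredient is precisely the surjectivity of $h\mapsto\la h\,\cdot,h\,\cdot\ra$ from $\Gl(\ggo,J)$ onto the $J$-compatible inner products, which you supply via the positive square root $P^{1/2}$ commuting with $J$. Your closing fiber discussion is also sound, and complements the unitary ambiguity that the paper records separately in Remark \ref{rmk_gauge}.
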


% {\alert Explain the fact that by moving brackets (not within $\Gl(\ggo,J)$ but in $\Gl(\ggo)$) one considers all the different complex structures at once? }

\begin{remark}\label{rmk_gauge}
Under the correspondence described in Proposition \ref{prop_movingbrackets}, two brackets $\mu_1$, $\mu_2$ satisfying $\mu_1 = k \cdot \mu_2$, with $k$ in the unitary group
\[
   \U(\ggo,J) := \{ h\in \Gl(\ggo, J) : h^{-1} = h^t  \},
\]
give rise to the \emph{same} Hermitian manifold. Indeed, if $\mu_2 = h_2\cdot \mu$ for $h_2 \in \Gl(\ggo,J)$, then $\mu_1 = k h_2 \cdot \mu$, and the corresponding scalar products $\ip_1 = \la k h_2 \cdot, k h_2 \cdot \ra$ and $\ip_2 = \la h_2 \cdot, h_2 \cdot\ra$ on $\ggo$ coincide.
\end{remark}

With regards to pluriclosed metrics, it was shown in \cite[$\S$4]{EFV15} that the SKT condition translated into the brackets setting is a set of polynomial equations in $\Vg$. Thus, the set of brackets giving rise to SKT metrics is a real algebraic subset of the orbit $\Gl(\ggo,J) \cdot \mu$.

It is natural to ask how the pluriclosed flow  of left-invariant SKT structures looks like in the space of brackets. The first answer to this question is given by the so called \emph{bracket flow},
\begin{equation}\label{eqn_pluriBF}
    \ddt \mu = -\pi(P_\mu) \mu, \qquad \mu(0) = \mu_0,
\end{equation}
an ODE for a curve $\mu = \mu(t)$ in $\Vg$, where $P_\mu \in \End(\ggo)$ are implicitly defined by
\begin{equation}\label{eqn_defPmu}
  \omega(P_\mu \, \cdot \, , \cdot )  = \unm  \big(\rho_\mu^B \big)^{1,1} (\cdot, \cdot),
\end{equation}
$\rho^B_\mu \in \Lambda^2 \ggo^*$ here denotes the Bismut-Ricci form of the Hermitian manifold associated with $\mu$ and $\pi$ is the representation induced by the action (\ref{eqn_actionbrackets}) defined by
\begin{equation}
(\pi(A)\mu)(X,Y) = A\mu(X,Y)-\mu(AX,Y)-\mu(X,AY), \quad A\in \Gl(\ggo), \quad X,Y \in \ggo.
\end{equation}
It follows from \cite[Thm.~ 4.2]{EFV15}, \cite[Thm.~ 5.1]{Lau15} that the unique solution $\mu(t)$ to \eqref{eqn_pluriBF} defines ---as explained above--- a family of Hermitian manifolds that coincides, up to pull-back by time-dependent biholomorphisms, with the pluriclosed flow starting at the corresponding SKT metric. Following \cite[$\S$3]{BL17} we will prove a slightly more general result, which will allow us to write down the bracket flow equations using fewer variables. This will be of great help in our study of the flow on almost-abelian solvmanifolds.  Recall that
\[
  \ug(\ggo, J) := \{ A\in \glg(\ggo, J) : A = -A^t \} \simeq \ug(n)
\]
is the Lie algebra of the unitary group $\U(\ggo, J) \simeq \U(n)$.

\begin{theorem}\label{thm_gaugedBF}
Let $(\G, J, g_0)$ be a left-invariant SKT structure on the simply-connected Lie group $\G$, with associated bracket $\mu_0 \in \Vg$. Let $\omega(t)(\cdot,\cdot) = g(t) (J \cdot, \cdot)$ and $\bar \mu(t)$ denote respectively the solutions to the pluriclosed flow \eqref{eqn_pluriflow} and to the \emph{gauged bracket flow}
\begin{equation}\label{eqn_gaugedBF}
    \ddt \bar \mu = -\pi(P_{\bar \mu} - U_{\bar \mu}) \bar \mu, \qquad \bar \mu(0) = \mu_0,
\end{equation}
where $\Vg \ni \bar \mu \mapsto U_{\bar \mu} \in \ug(\ggo, J)$ is an arbitrary smooth map. Then, both solutions exist for the same interval of time $I\subset \RR$, and if $(\G_{\bar \mu}, J_{\bar \mu}, g_{\bar \mu})$ denotes the Hermitian manifold associated with $\bar \mu$ (defined as above, after fixing $J := J(e)$, $\ip := g_0(e)$ on $\ggo$), then there exist time-dependent, equivariant diffeomorphisms
\[
    \varphi_t : (\G, J, g(t))  \, \to    \,   \big(\G_{\bar \mu(t)}, J_{\bar \mu(t)}, g_{\bar \mu(t)} \big),
\]
such that $\varphi_t^* J_{\bar \mu(t)} = J$ and $\varphi_t^* g_{\bar \mu(t)} = g(t)$ for all $t \in I$.
\end{theorem}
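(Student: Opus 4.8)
The plan is to encode the whole correspondence in a single curve $h(t)\in\Gl(\ggo,J)$, following the scheme of \cite{BL17}, the only novelty being that the extra term $U_{\bar\mu}$ lies in $\ug(\ggo,J)$ and is therefore invisible to the fundamental form. Fix $\bar\omega:=\langle J\cdot,\cdot\rangle$, the fundamental $2$-form of $(J,\ip)$ on $\ggo$, and consider the coupled ODE
\[
  \ddt h=-\big(P_{\bar\mu}-U_{\bar\mu}\big)h,\qquad \bar\mu=h\cdot\mu_0,\qquad h(0)=\Id.
\]
Since $\bar\omega$ and $(\rho^B)^{1,1}$ are $J$-invariant $2$-forms, \eqref{eqn_defPmu} forces $P_{\bar\mu}$ to commute with $J$, and $U_{\bar\mu}\in\ug(\ggo,J)$ does too; hence the right-hand side takes values in $\glg(\ggo,J)$, is tangent to $\Gl(\ggo,J)$, and depends smoothly on $h$, so there is a unique maximal solution $h(t)\in\Gl(\ggo,J)$ on an interval $I\ni 0$. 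Differentiating the action via $\ddt(h\cdot\mu_0)=\pi(\dot h\,h^{-1})(h\cdot\mu_0)$ shows that $\bar\mu(t)=h(t)\cdot\mu_0$ solves \eqref{eqn_gaugedBF}. As $h(t)$ is then a Lie algebra isomorphism $(\ggo,\mu_0)\to(\ggo,\bar\mu(t))$, simple-connectedness integrates it to an equivariant Lie group isomorphism $\varphi_t:\G\to\G_{\bar\mu(t)}$ with $d\varphi_t|_e=h(t)$, and $[h(t),J]=0$ yields $\varphi_t^*J_{\bar\mu(t)}=J$.

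The heart of the proof is the evolution of $\omega(t):=\varphi_t^*\omega_{\bar\mu(t)}=h(t)^*\bar\omega$. Writing $B(t):=\dot h\,h^{-1}\in\glg(\ggo,J)$, one has
\[
  \ddt\omega(t)=h(t)^*\big(\bar\omega(B\,\cdot,\cdot)+\bar\omega(\cdot,B\,\cdot)\big).
\]
Two short linear-algebra facts finish the computation. First, for $U\in\ug(\ggo,J)$, skew-symmetry together with $[U,J]=0$ gives $\bar\omega(U\,\cdot,\cdot)+\bar\omega(\cdot,U\,\cdot)=0$, so the gauge term disappears entirely. Second, from \eqref{eqn_defPmu} and the antisymmetry of both $\bar\omega$ and $(\rho^B_{\bar\mu})^{1,1}$, one gets $\bar\omega(P_{\bar\mu}\,\cdot,\cdot)+\bar\omega(\cdot,P_{\bar\mu}\,\cdot)=(\rho^B_{\bar\mu})^{1,1}$. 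Substituting $B=-(P_{\bar\mu}-U_{\bar\mu})$ gives $\ddt\omega(t)=-h(t)^*(\rho^B_{\bar\mu})^{1,1}$. Since $h(t)$ is an isomorphism of Hermitian Lie algebras from the data of $g(t)$ onto that of $\bar\mu(t)$, naturality of the Bismut connection and of its Ricci form yields $h(t)^*\rho^B_{\bar\mu}=\rho^B_{g(t)}$, while $[h,J]=0$ lets $h^*$ commute with the $(1,1)$-projection. Hence $\ddt\omega(t)=-(\rho^B_{g(t)})^{1,1}$ with $\omega(0)=\omega_0$, so $g(t)$ is a pluriclosed flow solution and, by uniqueness, equals the given one.

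To see that the two maximal intervals coincide, I would compare the gauged flow with the ungauged one ($U\equiv0$), whose equivalence with the pluriclosed flow is already known (\cite[Thm.~4.2]{EFV15}, \cite[Thm.~5.1]{Lau15}). If $h_0(t)$ drives the ungauged flow, then using the naturality $P_{k\cdot\mu}=kP_\mu k^{-1}$ one checks that $h(t)=k(t)h_0(t)$, where $k(t)\in\U(\ggo,J)$ solves $\dot k=U_{k\cdot\mu}\,k$ with $k(0)=\Id$. As $\U(\ggo,J)$ is compact, $k(t)$ exists exactly as long as $h_0(t)$, so the gauged and ungauged bracket flows share their maximal interval, which in turn equals that of the pluriclosed flow.

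The step I expect to be the main obstacle is conceptual rather than computational: the naturality identity $h^*\rho^B_{\bar\mu}=\rho^B_{g(t)}$. Everything rests on the Bismut connection being a canonical object attached to $(\mu,J,\ip)$, so that an orthogonal, holomorphic bracket-isomorphism intertwines connection, curvature and Bismut--Ricci form; this must be verified carefully from the defining formula \eqref{eqn_formulac}. A secondary point requiring care is that the two maximal intervals genuinely coincide --- not merely that one contains the other --- which is exactly what the compactness of the unitary gauge curve $k(t)$ guarantees.
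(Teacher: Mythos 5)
Your proposal is correct, but it is organized differently from the paper's proof, so a comparison is in order. The paper's entire proof is essentially what you relegate to your third paragraph: it defines $k(t)\in\U(\ggo,J)$ by $k'=U_{k\cdot\mu}\,k$, $k(0)=\Id$, checks via the $\U(\ggo,J)$-equivariance of $\mu\mapsto P_\mu$ that $k(t)\cdot\mu(t)$ solves the gauged equation \eqref{eqn_gaugedBF} (hence equals $\bar\mu(t)$ by ODE uniqueness), and then concludes at once from the ungauged equivalence \cite[Thm.~5.1]{Lau15} together with Remark \ref{rmk_gauge} --- unitarily related brackets define the \emph{same} Hermitian manifold --- with no computation involving $(\rho^B)^{1,1}$ at all. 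Your first two paragraphs instead re-prove the bracket-flow/pluriclosed-flow correspondence directly in the gauged setting: the driving curve $h(t)$ (which is the paper's Remark \ref{rmk_ht}), the fact that elements of $\ug(\ggo,J)$ pair to zero against $\bar\omega$, the identity $\bar\omega(P_{\bar\mu}\,\cdot,\cdot)+\bar\omega(\cdot,P_{\bar\mu}\,\cdot)=(\rho^B_{\bar\mu})^{1,1}$, and the naturality $h^*\rho^B_{\bar\mu}=\rho^B_{g(t)}$. All of these check out, and the naturality step you flag as the main obstacle is indeed true and standard: the Bismut connection is canonically attached to $(J,g)$, so an orthogonal, holomorphic bracket isomorphism intertwines the Bismut--Ricci forms, and $[h,J]=0$ lets $h^*$ commute with the $(1,1)$-projection. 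The trade-off: the paper's route is much shorter, hiding the analytic content inside \cite{Lau15}; yours makes geometrically transparent \emph{why} unitary gauging is invisible to the metric, and is self-contained for the main equivalence, but you still need the $k(t)$-conjugation plus \cite{Lau15}, \cite{EFV15} for the equality of maximal intervals, so the reduction argument cannot be dispensed with entirely. One small caveat, which applies equally to the paper's wording: compactness of $\U(\ggo,J)$ directly gives only that $k$, and hence $\bar\mu=k\cdot\mu$, exists as long as $\mu$ does; the reverse containment of intervals needs the symmetric un-gauging argument (or the observation that $\Vert\bar\mu\Vert=\Vert\mu\Vert$ traps $\mu$ in a compact set up to any finite time where $\bar\mu$ survives), which is routine but deserves a sentence rather than the phrase ``exists exactly as long as.''
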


\begin{proof}
The result will follow from \cite[Thm.~ 5.1]{Lau15} and Remark \ref{rmk_gauge} once we show that $\bar \mu(t) = k(t) \cdot \mu(t)$  for some one-parameter family $\{k(t) \}_{t\in I} \subset \U(\ggo, J)$, where $\mu(t)$ is the solution to the bracket flow \eqref{eqn_pluriBF}. To do this, as in the proof of \cite[Prop.~ 3.1]{BL17} we let $k(t)$ solve the ODE
\[
  k'(t) = U_{k(t) \cdot \mu(t)} \, k(t), \qquad k(0) = \Id_\ggo.
\]
Since $k' k^{-1}$ always belongs to the Lie algebra of $\U(\ggo,J)$, a standard ODE argument shows that one has $k(t) \in \U(\ggo,J)$ for all $t$ for which the bracket flow $\mu(t)$ is defined. After setting $\tilde \mu(t) := k(t) \cdot \mu(t)$ and using linearity of the action one obtains
\[
    \tilde \mu' = -\pi\big(k \, P_{\mu}  \, k^{-1} - U_{k\cdot \mu} \big) (k \cdot \mu) = - \pi(P_{\tilde \mu} - U_{\tilde \mu}) \, \tilde \mu.
\]
The second equality follows by the $\U(\ggo,J)$-equivariance of the map $\mu \mapsto P_\mu$, which can be checked by a routine computation. From uniqueness of ODE solutions it follows that $\bar \mu(t) = \tilde \mu(t)$, as we wanted to prove.
\end{proof}

\begin{remark}\label{rmk_ht}
It follows from \cite[Thm.~5.1]{Lau15} and the proof of Theorem \ref{thm_gaugedBF} that the solution $\bar \mu(t)$ to \eqref{eqn_gaugedBF} may be written as  $\bar \mu(t) = h(t) \cdot \mu_0$, where $\{h(t) \}\subset \Gl(\ggo)$ solves
\[
      h' = -\left(P_{\bar \mu(t)} - U_{\bar \mu(t)} \right) h, \qquad h(0) = \Id_\ggo.
\]
\end{remark}

In order to understand the asymptotic behavior of a geometric flow it is important to study suitably normalized solutions. To that end, recall that the scalar product on $\ggo$ induces naturally a scalar product on $\Vg$, see e.g.~\cite[(20)]{homRF}. Following the ideas in \cite{homRF}, we explain how the normalized solutions can be regarded as solutions to a similar equation:

\begin{lemma}\label{lem_BFnormeqn}
If $\bar \mu(t)$ is a solution to \eqref{eqn_gaugedBF} then, up to time reparameterization, the family $\nu(t) := \bar \mu(t)/\Vert \bar \mu(t) \Vert$ is a solution to the \emph{normalized gauged bracket flow}
\begin{equation}\label{eqn_BFnorm}
  \ddt \nu = -\pi (P_\nu - U_\nu + r_\nu \Id_\ggo) \nu,
\end{equation}
where $r_\nu :=  \la  \pi (P_\nu - U_\nu) \nu, \nu\ra$.
\end{lemma}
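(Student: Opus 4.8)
The plan is to show that the normalized curve $\nu(t) = \bar\mu(t)/\Vert\bar\mu(t)\Vert$ satisfies \eqref{eqn_BFnorm} up to a reparameterization of time, by differentiating the quotient directly and identifying the extra terms that appear. First I would set $s(t) := \Vert\bar\mu(t)\Vert$ and compute $\ddt \nu = \tfrac{1}{s}\ddt\bar\mu - \tfrac{s'}{s^2}\bar\mu$. Using the gauged bracket flow \eqref{eqn_gaugedBF}, the first term is $-\tfrac{1}{s}\pi(P_{\bar\mu}-U_{\bar\mu})\bar\mu$, and since the representation $\pi$ commutes with scaling in $\mu$ (it is linear in $\mu$) and the maps $\mu\mapsto P_\mu$, $\mu\mapsto U_\mu$ are scale-equivariant in the appropriate degree, I expect $\pi(P_{\bar\mu}-U_{\bar\mu})\bar\mu = s\,\pi(P_\nu - U_\nu)\nu$. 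The scaling behavior of $P_\mu$ follows from its defining relation \eqref{eqn_defPmu}: the Bismut-Ricci form $\rho^B_\mu$ is quadratic in the bracket, so replacing $\mu$ by $\nu = \mu/s$ rescales $P_\mu$ by $s$, matching the homogeneity needed here. This is the key scaling bookkeeping step and the most error-prone part, since one must track exactly how each object transforms under $\mu \mapsto \mu/s$.

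Next I would compute the normalization factor $s'/s$. Differentiating $s^2 = \la\bar\mu,\bar\mu\ra$ gives $s\,s' = \la \ddt\bar\mu,\bar\mu\ra = -\la\pi(P_{\bar\mu}-U_{\bar\mu})\bar\mu, \bar\mu\ra$. Dividing by $s^2$ and again invoking the scaling identities, this becomes $s'/s = -\la\pi(P_\nu-U_\nu)\nu,\nu\ra = -r_\nu$ with $r_\nu$ as defined in the statement. Substituting back, the second term $-\tfrac{s'}{s^2}\bar\mu = -\tfrac{s'}{s}\nu = r_\nu\,\nu$. Combining, I obtain
\[
  \ddt \nu = -\pi(P_\nu - U_\nu)\nu + r_\nu\,\nu.
\]
It then remains to absorb the $r_\nu\,\nu$ term into the $\pi$-operator. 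The point is that $\pi(\Id_\ggo)\mu = -\mu$ for every $\mu$: indeed, from the definition $(\pi(\Id)\mu)(X,Y) = \mu(X,Y) - \mu(X,Y) - \mu(X,Y) = -\mu(X,Y)$. Hence $-\pi(r_\nu\Id_\ggo)\nu = r_\nu\,\nu$, which lets me rewrite the equation as $\ddt\nu = -\pi(P_\nu - U_\nu + r_\nu\Id_\ggo)\nu$, exactly \eqref{eqn_BFnorm}.

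Finally I would address the phrase ``up to time reparameterization.'' The computation above is carried out in the original time variable $t$, but since $s(t)$ may vary, the normalized curve traverses its image at a different speed than the canonical unit-speed parameterization implicit in \eqref{eqn_BFnorm}; one introduces a new time $\tau$ with $\dd\tau/\dd t = 1/s(t)$ (or the reciprocal, depending on the convention of \cite{homRF}) so that the rescaled equation holds in $\tau$. This is the standard normalization trick and requires only that $s(t) > 0$ along the flow, which holds since $\bar\mu(t) \neq 0$ whenever $\G$ is non-abelian. The main obstacle, as noted, is not conceptual but the careful verification of the homogeneity degree of $P_\mu$ under scaling; once that identity $P_{\lambda\mu} = \lambda P_\mu$ (for $\lambda > 0$) is pinned down from \eqref{eqn_defBismutRic} and \eqref{eqn_defPmu}, the rest is a direct substitution.
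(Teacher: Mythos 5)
Your strategy --- differentiate the quotient, compute $s'/s$ for $s(t):=\Vert\bar\mu(t)\Vert$, absorb the radial term via $\pi(\Id_\ggo)\nu=-\nu$, then reparameterize --- is sound, and it is essentially the computation behind the paper's own proof (which phrases it as projecting the integral curves of the bracket flow vector field onto the unit sphere of $\Vg$). But the step you yourself single out as ``the key scaling bookkeeping'' is carried out incorrectly, and the error propagates. Since the metric is fixed and only the bracket scales, the Bismut--Ricci form is quadratic in $\mu$ --- as you correctly say --- and therefore \eqref{eqn_defPmu} gives $P_{\lambda\mu}=\lambda^{2}P_{\mu}$, \emph{not} $P_{\lambda\mu}=\lambda P_{\mu}$; this degree-$2$ homogeneity is exactly what the paper uses in the proof of Proposition \ref{prop_solitonevol}. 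Consequently $\pi(P_{\bar\mu})\bar\mu=s^{3}\,\pi(P_{\nu})\nu$ rather than $s\,\pi(P_{\nu})\nu$: the flow's vector field is homogeneous of degree $3$. Note also that your degree-$1$ claim would make the normalized equation hold \emph{exactly in the original time variable} (this is what your second paragraph derives), and then your third paragraph, which insists a reparameterization is still needed, contradicts your own conclusion; this incoherence is a symptom of the wrong homogeneity. Redone correctly, both terms acquire the same factor: $s'/s=-s^{2}r_{\nu}$ and $\tfrac1s\ddt\bar\mu=-s^{2}\,\pi(P_{\nu}-U_{\nu})\nu$, so that
\[
  \ddt\nu \;=\; s^{2}\,\bigl(-\pi(P_{\nu}-U_{\nu}+r_{\nu}\Id_\ggo)\nu\bigr),
\]
and the positive factor $s(t)^{2}$ is removed by the reparameterization $\dd\tau=s^{2}\,\dd t$ (not $\dd\tau=s^{-1}\,\dd t$, nor its reciprocal). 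So the skeleton of your argument is correct and fixable, but as written the central identity is false and the proof does not close.

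A secondary gap: Theorem \ref{thm_gaugedBF} allows $\mu\mapsto U_{\mu}$ to be an \emph{arbitrary} smooth map, so your assertion that $U$ is ``scale-equivariant in the appropriate degree'' has no justification in general. For \eqref{eqn_BFnorm} to hold verbatim with the gauge evaluated at the unit-norm bracket one needs $U_{\lambda\mu}=\lambda^{2}U_{\mu}$ as well; this does hold for the gauges actually used in the paper (the gauge of Section \ref{sec_almostabelian} is quadratic in $(a,v,A)$, and $U\equiv 0$ in the nilpotent case), but for a general gauge one should either impose this homogeneity or note that $\nu(t)$ solves \eqref{eqn_BFnorm} with the rescaled gauge map $\nu\mapsto s^{-2}U_{s\nu}$. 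This deserves at least a sentence, since it is precisely the point where the arbitrariness of the gauge interacts with the scaling.
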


\begin{proof}
Since $\pi(\Id_\ggo) \nu = - \nu$, the right-hand-side in \eqref{eqn_BFnorm} is nothing but $X(\nu) + r_\nu \, \nu$, where $X(\nu)$ is the vector field on $V(\ggo)$ defining the gauged bracket flow ODE. Observe that $r_\nu \, \nu$ is minus the projection of $X(\nu)$ onto the line spanned by $\nu$, thus $X(\nu) + r_\nu \, \nu$ is tangent to the unit sphere in $V(\ggo)$. Hence its integral curves are nothing but the projection of the integral curves of $X$ onto the sphere, and the lemma follows. We refer the reader to \cite[$\S$3.3]{homRF} for a more detailed explanation.
\end{proof}

 \begin{remark}\label{rem_BFnorm}
It is also possible to consider other normalizations of the bracket flow. They are all of the form $c(t)  \, \bar \mu(t)$, for some real valued function $c(t) > 0$. It follows exactly as in \cite[$\S$3.3]{homRF} that, up to a time reparameterization, they solve the ODE \eqref{eqn_BFnorm} for a suitably chosen $r_\nu$.
 \end{remark}

\subsection{Static SKT metrics and pluriclosed solitons}\label{section_solitons}

In \cite{ST10} the authors define the notion of a \emph{static SKT metric}: a Hermitian metric $g$ on a complex manifold $(M^{2n}, J)$ is called static if its Bismut-Ricci form satisfies
\begin{equation}\label{eqn_static}
    (\rho^B)^{1,1} = \alpha \, \omega, \qquad \alpha \in \RR.
\end{equation}
Static metrics are to the pluriclosed flow what Einstein metrics are to the Ricci flow. When $\alpha\neq 0$, \eqref{eqn_static} implies the existence of a so called \emph{Hermitian-symplectic} structure, see \cite{ST10}.

In the homogeneous case these metrics were studied in \cite{Enr13}, where it is shown that there are no examples on 2-step nilmanifolds other than torii. In Section \ref{sec_almostabelian} we prove that the same is true on almost-abelian solvmanifolds when $\alpha \neq 0$, see Remark \ref{rem_staticaa}. On the other hand, for $\alpha = 0$ there exist some new examples: see Corollary \ref{cor_plurisolitons} and Remark  \ref{rem_steadystatic}.
% {\alert This relies on Thm. 1.3 in \cite{EFV12} but according to errata this result still holds}.
% ERRATA

The non-existence of invariant static metrics in these spaces suggests the idea of enlarging the class of ``special solutions'' to the pluriclosed flow. A natural way of saying that a structure is ``special'' with respect to a geometric flow is that its corresponding evolution is \emph{self-similar}, meaning that the flow starting at said structure evolves only by scaling and pull-back by diffeomorphisms preserving the underlying geometric structure (in our case the complex structure~ $J$). Moreover, in the presence of symmetries one may even require the latter to be equivariant under the group action. In this direction and following \cite{Lau15} we define

\begin{definition}\label{def_soliton}
A Lie group endowed with a left-invariant SKT structure $(\G, J, g)$ is an \emph{algebraic pluriclosed soliton} if the endomorphism $P\in \End(\ggo)$ defined by $\omega(P \cdot, \cdot) = \unm (\rho^B)^{1,1}(\cdot,\cdot)$ satisfies
\begin{equation}\label{eqn_defsoliton}
P = \alpha \Id_\ggo + \unm \left( D + D^t \right),  \qquad \alpha \in \RR, \quad D \in \Der(\ggo), \quad [D,J] =0.
\end{equation}
\end{definition}

\begin{remark}\label{rem_algebraic}
In the Ricci flow context, the analogous to \eqref{eqn_defsoliton} is called a \emph{semi-algebraic soliton}, and they are called algebraic when $D = D^t$, cf.~also \cite[Def.~4.11]{Lau15b}. Since in the case of the pluriclosed flow these notions are not equivalent (see Remark \ref{rem_semialg}), we decided to save the word algebraic for \eqref{eqn_defsoliton} because it seems more natural, mainly due to Proposition \ref{prop_solitonevol} below.
\end{remark}

In connection with the previous section we have the following useful characterization:

\begin{proposition}\label{prop_solitonfixedpoint}
A left-invariant SKT structure $(\G,J,g)$ is an algebraic pluriclosed soliton if and only if it is a fixed point of a normalized bracket flow \eqref{eqn_BFnorm}, for some choice of gauge.
\end{proposition}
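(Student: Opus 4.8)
The plan is to translate both conditions into a single algebraic identity for operators acting on the bracket, exploiting that the kernel of $A \mapsto \pi(A)\mu$ consists exactly of the derivations of $(\ggo,\mu)$. Writing $\nu_0 := \mu/\Vert \mu\Vert$ for the normalized bracket attached to $(\G,J,g)$, I would first record three facts. First, $\pi(A)\mu = 0$ if and only if $A \in \Der(\ggo,\mu)$, straight from the definition of $\pi$; and since $\pi(A)(c\mu)=c\,\pi(A)\mu$, derivations are insensitive to rescaling the bracket, so $\Der(\ggo,\nu_0)=\Der(\ggo,\mu)$. Second, $\pi(\Id_\ggo)\nu_0 = -\nu_0$, which is already used in the proof of Lemma \ref{lem_BFnormeqn}. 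Third, and crucially, $P_\mu$ is $g$-symmetric and commutes with $J$: the $J$-invariance of the $(1,1)$-form $(\rho^B)^{1,1}$ in \eqref{eqn_defPmu} forces $[P_\mu,J]=0$, and combined with the fact that $JP_\mu$ is $g$-skew (as $\omega$ and $(\rho^B)^{1,1}$ are $2$-forms) this yields $P_\mu^t = P_\mu$. By contrast, any admissible gauge value $U_{\nu_0}\in \ug(\ggo,J)$ is skew-symmetric and commutes with $J$. It is this mismatch of symmetry types (symmetric $J$-commuting for $P$, skew $J$-commuting for $U$) that drives the whole argument.

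For the forward implication I would assume $(\G,J,g)$ is an algebraic soliton. Since $P_{c\mu}=c^2 P_\mu$ and $\Der$ is scale-invariant, the soliton condition \eqref{eqn_defsoliton} also holds for $\nu_0$: $P_{\nu_0} = \alpha\,\Id_\ggo + \unm(D+D^t)$ with $D\in\Der(\ggo,\nu_0)$ and $[D,J]=0$. I would then prescribe the gauge so that $U_{\nu_0}=\unm(D^t-D)\in\ug(\ggo,J)$ (this lies in $\ug(\ggo,J)$ because $D-D^t$ is skew and commutes with $J$), extending it to an arbitrary smooth map off $\nu_0$. Then $P_{\nu_0}-U_{\nu_0}=\alpha\,\Id_\ggo + D$, so that $\pi(P_{\nu_0}-U_{\nu_0})\nu_0 = \alpha\,\pi(\Id_\ggo)\nu_0 + \pi(D)\nu_0 = -\alpha\,\nu_0$, giving $r_{\nu_0}=-\alpha$ and hence $\pi(P_{\nu_0}-U_{\nu_0}+r_{\nu_0}\Id_\ggo)\nu_0 = -\alpha\,\nu_0+(-\alpha)(-\nu_0)=0$. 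Thus $\nu_0$ is a fixed point of \eqref{eqn_BFnorm} for this gauge.

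For the converse I would assume $\nu_0$ is a fixed point of \eqref{eqn_BFnorm} for some gauge, that is, $\pi(P_{\nu_0}-U_{\nu_0}+r_{\nu_0}\Id_\ggo)\nu_0 = 0$. Setting $D := P_{\nu_0}-U_{\nu_0}+r_{\nu_0}\Id_\ggo$, the first fact gives $D\in\Der(\ggo,\nu_0)=\Der(\ggo,\mu)$, while $[D,J]=0$ follows because $P_{\nu_0}$, $U_{\nu_0}$ and $\Id_\ggo$ all commute with $J$. To recover the soliton decomposition I would use $P_{\nu_0}^t=P_{\nu_0}$ and $U_{\nu_0}^t=-U_{\nu_0}$ to compute $\unm(D+D^t)=P_{\nu_0}+r_{\nu_0}\Id_\ggo$, so that $P_{\nu_0}=(-r_{\nu_0})\Id_\ggo+\unm(D+D^t)$, which is exactly \eqref{eqn_defsoliton} with $\alpha=-r_{\nu_0}$. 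Scale-invariance then transfers the conclusion from $\nu_0$ back to $\mu$, i.e.\ to $(\G,J,g)$.

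The step I expect to be the main obstacle is establishing the two structural properties of $P_\mu$ — its $g$-symmetry and its commuting with $J$ — since these are precisely what make the symmetric/skew splitting of $D$ align with the gauge freedom on one side and with $P_{\nu_0}$ on the other; everything else is short bookkeeping once they are in place. A secondary point to pin down carefully is the reading of ``fixed point of the normalized bracket flow'': it means the pointwise vanishing of the right-hand side of \eqref{eqn_BFnorm} at $\nu_0$, so that only the single value $U_{\nu_0}$ of the gauge is relevant and may be prescribed freely, which is exactly what the forward direction requires.
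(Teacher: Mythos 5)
Your proposal is correct and takes essentially the same route as the paper's proof: the same gauge choice $U_\nu = -\unm(D-D^t)$ in the forward direction, and the same symmetric/skew splitting of $D = P_\nu - U_\nu + r_\nu \Id_\ggo$ in the converse. The only difference is that you spell out details the paper leaves implicit, namely the $g$-symmetry and $J$-invariance of $P_\mu$, the explicit computation $r_{\nu_0} = -\alpha$ verifying the fixed-point equation, and the scaling bookkeeping between $\mu$ and its normalization $\nu_0$.
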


\begin{proof}
If the  bracket $\nu$ corresponding to $g$ is a fixed point of \eqref{eqn_BFnorm} for some gauging maps $U_\nu$, then $P_\nu -U_\nu + r_\nu \Id_\ggo =: D \in \Der(\ggo,\nu)$. Using that $U_\nu$ is skew-symmetric and $P_\nu + r_\nu \Id_\ggo$ symmetric, we easily obtain \eqref{eqn_defsoliton}. Recall that both $U_\nu$ and $P_\nu$ commute with $J$.

Conversely, if $\nu$ satisfies \eqref{eqn_defsoliton} then one chooses $U_\nu :=- \unm (D-D^t) \in \ug(\ggo,J)$ as gauging map, and obtains a fixed point of the corresponding normalized flow \eqref{eqn_BFnorm}.
\end{proof}

As mentioned above, the relevance of these special SKT structures is that they give rise to self-similar solutions to the pluriclosed flow:

\begin{proposition}\label{prop_solitonevol}
If $(\G, J, \omega_0)$ is an algebraic pluriclosed soliton then the solution $\omega(t)$ to the pluriclosed flow \eqref{eqn_pluriflow} is given by $\omega(t) = c(t) \, \varphi_t^* \omega_0$, where $c(t) \in \RR$ and $\varphi_t$ is a one-parameter family of automorphisms of the complex manifold $(\G,J)$ which are also Lie automorphisms of~ $\G$.
\end{proposition}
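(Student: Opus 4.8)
The plan is to run the gauged bracket flow of Theorem~\ref{thm_gaugedBF} with a gauge adapted to the soliton derivation $D$, show that the resulting bracket evolves purely by scaling, and then translate this back through the varying-brackets dictionary into the asserted self-similar evolution of $\omega(t)$.

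First I would fix $J := J(e)$, $\ip := g(e)$ on $\ggo$, let $\mu_0$ be the bracket of $(\G,J,\omega_0)$, and record from the soliton condition \eqref{eqn_defsoliton} that $P_{\mu_0} = \alpha\Id_\ggo + \unm(D + D^t)$ with $D \in \Der(\ggo,\mu_0)$ and $[D,J]=0$. I would choose the gauging map so that along the ray through $\mu_0$ it equals $U_{s\mu_0} := -\unm s^2 (D - D^t)$, which lies in $\ug(\ggo,J)$ precisely because $[D,J]=0$, extended smoothly and arbitrarily to the rest of $\Vg$. Let $c(t)$ solve $c' = \alpha c^3$, $c(0)=1$, i.e.\ $c(t) = (1-2\alpha t)^{-1/2}$ (with $c\equiv 1$ if $\alpha = 0$). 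Using that $P_\mu$ is homogeneous of degree two in $\mu$ (the Bismut--Ricci form is quadratic in the bracket when $J,\ip$ are held fixed), that $\pi(\Id_\ggo)\mu = -\mu$, and that $\pi(D)\mu_0 = 0$ because $D$ is a derivation of $\mu_0$, a direct substitution shows that $\bar\mu(t) := c(t)\,\mu_0$ solves the gauged bracket flow \eqref{eqn_gaugedBF}: indeed $P_{c\mu_0} - U_{c\mu_0} = c^2(\alpha\Id_\ggo + D)$, so $-\pi(P_{c\mu_0}-U_{c\mu_0})(c\mu_0) = \alpha c^3\mu_0 = \ddt(c\mu_0)$. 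By uniqueness of ODE solutions this is the gauged bracket flow starting at $\mu_0$; equivalently, $\mu_0$ is a fixed point of the associated normalized flow \eqref{eqn_BFnorm}, as in Proposition~\ref{prop_solitonfixedpoint}.

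Next I would invoke Remark~\ref{rmk_ht} to write $\bar\mu(t) = h(t)\cdot\mu_0$ with $h' = -(P_{\bar\mu}-U_{\bar\mu})h = -c^2(\alpha\Id_\ggo + D)h$, $h(0) = \Id_\ggo$. Seeking $h(t) = s(t)\exp(f(t)D)$ with $s$ a scalar, the equation separates into $s'/s = -\alpha c^2$ and $f' = -c^2$, whose solutions give $s(t) = c(t)^{-1}$ and $f(0)=0$. Since $D \in \Der(\ggo,\mu_0)$ and $[D,J]=0$, each $\exp(f(t)D)$ is a Lie algebra automorphism of $(\ggo,\mu_0)$ commuting with $J$; by simple-connectedness it integrates to a holomorphic Lie group automorphism $\Psi_t$ of $(\G,J)$ with $d\Psi_t|_e = \exp(f(t)D)$.

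Finally, Theorem~\ref{thm_gaugedBF} yields $g(t) = \varphi_t^* g_{\bar\mu(t)}$, where $\varphi_t$ is the equivalence with $d\varphi_t|_e = h(t)$. By the varying-brackets dictionary preceding Proposition~\ref{prop_movingbrackets}, $\varphi_t^* g_{\bar\mu(t)}$ is the left-invariant metric on $\G$ with inner product $\la h(t)\cdot, h(t)\cdot\ra$ at $e$; since $h(t) = c(t)^{-1}\exp(f(t)D)$ this equals $c(t)^{-2}\la\exp(f(t)D)\cdot,\exp(f(t)D)\cdot\ra = c(t)^{-2}(\Psi_t^* g_0)(e)$, and the latter metric is $c(t)^{-2}\,\Psi_t^* g_0$ because $\Psi_t$ is an automorphism. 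Hence $g(t) = c(t)^{-2}\,\Psi_t^* g_0$ and $\omega(t) = c(t)^{-2}\,\Psi_t^*\omega_0$, which is the asserted self-similar form with scaling function $c(t)^{-2} = 1-2\alpha t$ and $\Psi_t \in \Aut(\G,J)$. The main obstacle is bookkeeping: correctly identifying the abstract equivalence $\varphi_t$ of Theorem~\ref{thm_gaugedBF} with the Lie-group isomorphism determined by $h(t)$, and cleanly splitting $h(t)$ into its scaling part (responsible for the rescaling of $g$) and its $J$-holomorphic automorphism part $\Psi_t$; one must also verify that the chosen gauge genuinely takes values in $\ug(\ggo,J)$, which is exactly where $[D,J]=0$ enters.
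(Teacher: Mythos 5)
Your proof is correct and follows essentially the same route as the paper's: the same choice of gauge $U = -\unm c^2 (D - D^t)$, the same verification that $\bar\mu(t) = c(t)\,\mu_0$ solves the gauged bracket flow \eqref{eqn_gaugedBF}, and the same appeal to Theorem \ref{thm_gaugedBF} to translate back to the metric. Your explicit factorization $h(t) = c(t)^{-1}\exp(f(t)D)$ via Remark \ref{rmk_ht} merely spells out in detail the step the paper states tersely, namely that after removing the scaling factors the equivalences $\varphi_t$ become holomorphic Lie group automorphisms.
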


\begin{proof}
Let us fix $J:= J(e)$, $\ip:= g_0(e)$ on $\ggo$, and denote by $\mu_0 \in \Vg$ the bracket corresponding to $g_0$. We have
\[
  P_{\mu_0} = \alpha \Id_\ggo + \unm (D + D^t), \qquad \alpha\in \RR, \qquad  D\in \Der(\ggo, \mu_0) \cap \glg(\ggo,J).
\]
We claim that the solution of the gauged bracket flow equation \eqref{eqn_gaugedBF} is given simply by scaling $\bar \mu(t) = c(t) \mu_0$, for suitable scalars $c(t) \in \RR$ and a suitable gauging $\{U_{t}\} \subset \ug(\ggo,J)$. Indeed, let $c(t) = (1-2\alpha \, t)^{-1/2}$ be the solution to
\[
    c' = \alpha \, c^3, \qquad c(0) = 1,
\]
and consider
\[
  U_t := -\unm c(t)^2  (D-D^t) \in \ug(\ggo,J).
\]
Using that $P_{c\cdot \bar \mu} = c^2 P_{\bar \mu}$ and $\pi(D)\mu_0 = 0$ we  obtain
\[
    - \pi (P_{\bar \mu(t)} - U_t) \bar \mu(t) = - c(t)^3 \, \pi(P_{\mu_0} - U_0) \mu_0 = -c(t)^3 \, \pi(\alpha \Id_\ggo + D) \mu_0 =  \alpha \, c(t)^3 \mu_0 = (\bar \mu(t))',
\]
from which our claim follows. The proposition now also follows by Theorem \ref{thm_gaugedBF}. Indeed, in the notation of that theorem, up to scaling we have that $(\G_{\bar \mu(t)}, J_{\bar\mu(t)}, g_{\bar \mu(t)})$ is constant equal to $(\G, J, g_0)$. Thus, after modifying them with the scaling factors, the equivariant diffeomorphisms $\varphi_t$ given by Theorem \ref{thm_gaugedBF} become Lie automorphisms of $\G$, and also automorphisms of the complex manifold $(\G,J)$, as they preserve $J$.
\end{proof}

\section{Nilmanifolds}\label{sec_nilmfd}

In this section we study the pluriclosed flow of invariant SKT structures on a simply-connected, non-abelian, nilpotent Lie group $(\N,J)$ endowed with a left-invariant complex structure $J$.

\subsection{SKT nilmanifolds}

All known examples of nilpotent Lie groups admitting a left-invariant SKT structure are $2$-step nilpotent. The latter were thought to exhaust all nilpotent examples, see \cite[Thm.~1.2]{EFV12}. However, it was pointed out to us by the authors of \cite{EFV12} in a personal communication that there is a gap in the proof of the above mentioned theorem. In particular, to the best of our knowledge there could a priori exist examples in $k$-step nilpotent Lie groups with $k>2$. A quick inspection of the proofs shows that \cite[Prop.~3.1]{EFV12} is still valid, and so are the results in \cite{EFV15} provided one adds the $2$-step nilpotent assumption.

By \cite[Prop.~3.1]{EFV12} the center  $\zg$ of the Lie algebra $\ngo$ of a $2$-step nilpotent Lie group with SKT structure is preserved by $J$.
 % that is, $J(\zg) \subset \zg$.
Thus, let us fix $(\ngo, J)$ a $2$-step nilpotent Lie algebra with a complex structure $J$ such that $J(\zg) \subset \zg$. Let $g$ be a $J$-compatible metric on $(\N, J)$, determined by the scalar product $\ip$ on $\ngo$, and denote by $\vg$ the orthogonal complement of $\zg$ in $\ngo$. The 2-step condition may be rewritten as
\[
  \ngo = \vg \oplus \zg, \qquad [\ngo,\ngo] = [\vg,\vg] \subseteq \zg.
\]

\begin{lemma}\label{lem_SKTonlyzg}
The fact that $g$ is SKT depends only on the restriction of $g$ to $\zg$.
\end{lemma}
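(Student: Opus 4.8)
The plan is to show that the SKT condition $dc=0$ for a $2$-step nilpotent Lie algebra $\ngo=\vg\oplus\zg$ with $J(\zg)\subset\zg$ depends only on $\ip|_{\zg}$. The key structural fact is that the Lie bracket is concentrated: $[\ngo,\ngo]=[\vg,\vg]\subseteq\zg$, and $[\zg,\ngo]=0$. My strategy is to first simplify the torsion form $c$ using \eqref{eqn_formulac}, then feed the result into $dc$ via \eqref{eqn_formuladc}, and track exactly which inner-product values survive.

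First I would examine $c(X,Y,Z)=-g([JX,JY],Z)-g([JY,JZ],X)-g([JZ,JX],Y)$. Since $J$ preserves both $\zg$ and $\vg$ (the latter because $\vg=\zg^\perp$ and $J$ is an isometry commuting with orthogonal projection onto the $J$-invariant subspace $\zg$), every bracket appearing lands in $\zg$, and a bracket $[Ja,Jb]$ is nonzero only when both $Ja,Jb$ have nonzero $\vg$-components, i.e.\ only the $\vg$-components of the arguments matter for the brackets. Crucially, each term pairs a bracket (living in $\zg$) against one of $X,Y,Z$ via $g$; because $[\,\cdot\,,\cdot\,]$ takes values in $\zg$, only the $\zg$-component of the paired argument contributes. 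Thus $c(X,Y,Z)$ is built from brackets $[Ja,Jb]$ with $a,b\in\vg$ (depending on $J$ and $\mu$ but not on $\ip$ beyond fixing $\vg$) paired against $\zg$-components, so $c$ depends on $\ip$ only through $\ip|_{\zg}$.

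Next I would substitute into \eqref{eqn_formuladc}. Every term there has the form $c([\,\cdot\,,\cdot\,],\,\cdot\,,\,\cdot\,)$, where the first slot of $c$ is now a bracket, hence lies in $\zg$. I would use the observation that in the expression for $c(W',X',Y')$ with $W'\in\zg$, the first argument $W'$ enters only through terms $g([JY',JW'],X')$ and $g([JW',JX'],Y')$; but $JW'\in\zg$ and $[\,\zg,\ngo]=0$, so both of these vanish, leaving $c(W',X',Y')=-g([JX',JY'],W')$, a single pairing of a $\zg$-valued bracket against $W'\in\zg$. Therefore each summand of $dc$ reduces to an expression of the form $g([J\,\cdot\,,J\,\cdot\,],[\,\cdot\,,\cdot\,])$, a pairing of two elements of $\zg$, which manifestly depends on $\ip$ only via $\ip|_{\zg}$. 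Consequently the entire equation $dc=0$ is a condition involving $\ip|_{\zg}$ alone, establishing the lemma.

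The main obstacle I anticipate is purely bookkeeping: verifying cleanly that $J$ restricts to $\vg$ (so that $J$-images of $\vg$-elements stay in $\vg$ and contribute to brackets, while $J$-images of $\zg$-elements stay in $\zg$ and are bracket-annihilated), and then carefully tracking, across all six terms of \eqref{eqn_formuladc} combined with the three terms of each $c$, that no surviving term ever pairs a bracket against a $\vg$-component. The potential pitfall is a cross term where a $\vg$-component sneaks into a $g$-pairing; the resolution is the repeated use of $[\zg,\ngo]=0$ and $[\vg,\vg]\subseteq\zg$, which forces every nonzero pairing to be between two $\zg$-elements. Once this is organized, the dependence on $\ip|_{\zg}$ only is immediate.
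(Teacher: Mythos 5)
Your proposal is correct and follows essentially the same route as the paper: both arguments hinge on the observation that when one slot of $c$ is central, formula \eqref{eqn_formulac} collapses to the single term $-\la [J\cdot,J\cdot],\,\cdot\,\ra$ (the paper's \eqref{eqn_cXcenter}), so that every summand of $dc$ in \eqref{eqn_formuladc} becomes an inner product of two brackets, i.e.\ of two elements of $\zg$, and hence $dc=0$ is a condition on $\ip|_{\zg}$ alone. Your intermediate discussion of how $c$ itself depends on the splitting $\ngo=\vg\oplus\zg$ is harmless but not needed, since your final formula for $dc$ involves no projections at all.
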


\begin{proof}
Notice that in  formula \eqref{eqn_formulac} for the torsion $3$-form $c$, if one of the entries, say $Z$, lies in the center $\zg$, then one gets
\begin{equation}\label{eqn_cXcenter}
  c(X,Y,Z) = -\big\la  [JX,JY], Z  \big\ra, \qquad Z\in \zg, \quad X, Y\in \ngo,
\end{equation}
whereas if two of them are in $\zg$, then $c$ vanishes. Since $[\ngo,\ngo]\subseteq \zg$, it follows from \eqref{eqn_formuladc} that
\[
  dc(\cdot,\cdot,\cdot,Z) = 0, \qquad  \forall \, \,Z\in \zg.
\]
Thus, $dc=0$ if and only if $dc(W,X,Y,Z) =0$, $\forall W,X,Y,Z \in \vg$. Using \eqref{eqn_cXcenter} one gets
\begin{align*}
  dc(W,X,Y,Z) = & \,\, \big\la [W,X],[JY,JZ] \big\ra - \big\la[W,Y],[JX,JZ]\big\ra + \big\la[W,Z],[JX,JY]\big\ra \\
                & + \big\la[X,Y],[JW,JZ]\big\ra - \big\la[X,Z],[JW,JY]\big\ra + \big\la[Y,Z],[JW,JX]\big\ra,
\end{align*}
from which the lemma follows.
\end{proof}

Notice that for a given SKT metric $g$ on $(\N, J)$, by the previous lemma there is a family of SKT metrics on $(\N,J)$ naturally associated to $g$, determined by the scalar products $ \big\{ \la h \, \cdot, h \, \cdot  \ra \,\, : \,\,  h \in  \Gl(\vg,J) \big\}$ on $\ngo$.

Here the group $\Gl(\vg,J)$ is given by
\begin{equation}\label{eqn_defglvJ}
  \Gl(\vg,J) :=   \left\{ h\in \Gl(\vg) : h \, J|_\vg = J|_\vg \, h  \right\} \simeq \Gl_{\dim \vg / 2}(\CC),
\end{equation}
considered as a subgroup of $\Gl(\ngo)$ via the embedding $\Gl(\vg) \subset \Gl(\ngo)$, $h\mapsto  \minimatrix{h}{0}{0}{\Id}$, where the blocks are in terms of $\ngo = \vg \oplus \zg$.

\subsection{Long-time behavior of the pluriclosed flow on nilmanifolds}

Let $(\N,J, g)$ be a left-invariant SKT structure with associated bracket $\mu \in V(\ngo) := \Lambda^2(\ngo^*) \otimes \ngo$. Consider the $\ip$-orthogonal decomposition $\ngo = \vg \oplus \zg$, where $\zg$ is the center of $(\ngo,\mu)$. Recall that by \cite{EFV15} (see also \cite{PV17}) the endomorphism $P_\mu$  defined in \eqref{eqn_defPmu} is given by
\begin{equation}\label{eqn_Pmunil}
  P_\mu = \twomatrix{\left(\Ricci_\mu\right)_\vg^{1,1}}{0}{0}{0},
\end{equation}
where $(A)_\vg := \proy_\vg \circ A \, |_\vg$ denotes the orthogonal projection $\End(\vg\oplus \zg) \to \End(\vg)$, and $\Ricci_\mu$ is the Ricci endomorphism of $(\ngo, \mu, g)$, that is, the Ricci curvature endomorphism of the corresponding left-invariant Riemannian metric $g$ on the nilpotent Lie group $\N_\mu$ whose Lie algebra is given by $(\ngo,\mu)$ (see \cite[Lemma 4.1]{nilRF}).

Following the discussion of Section \ref{section_BF}, we study the solutions to the ODE on $V(\ngo)$ given by
\begin{equation}\label{eqn_BFnil}
  \ddt \mu = -\pi \left( P_\mu\right) \mu, \qquad \mu(0) = \mu_0.
\end{equation}
Let $\mu(t)$ be the unique solution to \eqref{eqn_BFnil}. We first explain why the center is preserved.

\begin{lemma}\cite{EFV15}\label{lem_centerpreserved}
The center $\zg$  of $(\ngo,\mu_0)$ is also the center of $(\ngo,\mu(t))$, for all $t$.
\end{lemma}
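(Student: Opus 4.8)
The plan is to exploit the fact that the bracket flow \eqref{eqn_BFnil} is realized by a one-parameter family of Lie algebra isomorphisms, combined with the block structure \eqref{eqn_Pmunil} of $P_\mu$. By Remark \ref{rmk_ht}, applied with $U\equiv 0$ (so that \eqref{eqn_BFnil} becomes the ungauged case of \eqref{eqn_gaugedBF}), the solution can be written as $\mu(t) = h(t)\cdot \mu_0$, where $h(t)\in \Gl(\ngo)$ solves $\ddt h = -P_{\mu(t)}\, h$ with $h(0) = \Id_\ngo$. By the discussion of Section \ref{section_BF}, each $h(t)$ is a Lie algebra isomorphism from $(\ngo,\mu_0)$ onto $(\ngo,\mu(t))$, hence it carries the center of $(\ngo,\mu_0)$ onto the center of $(\ngo,\mu(t))$. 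In particular the center of $(\ngo,\mu(t))$ equals $h(t)\,\zg$. Thus the whole statement reduces to showing that $h(t)$ preserves $\zg$; I will in fact show that it fixes $\zg$ pointwise.

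The key observation is that $P_{\mu(t)}$ annihilates the center of $(\ngo,\mu(t))$. This is immediate from \eqref{eqn_Pmunil}: relative to the orthogonal splitting of $\ngo$ into that center and its complement, $P_{\mu(t)}$ is block-diagonal with vanishing block on the center. Note that \eqref{eqn_Pmunil} does apply to $\mu(t)$, since the pluriclosed flow preserves the SKT condition and $h(t)$ is an isomorphism, so $(\ngo,\mu(t))$ is again a $2$-step nilpotent SKT Lie algebra. Because the center of $(\ngo,\mu(t))$ is $h(t)\,\zg$, we obtain $P_{\mu(t)}\,h(t)\,Z = 0$ for every $Z\in\zg$. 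Therefore, for such $Z$,
\[
    \ddt \big( h(t)\,Z \big) = \big(\ddt h\big)\,Z = -P_{\mu(t)}\,h(t)\,Z = 0,
\]
so $h(t)\,Z = h(0)\,Z = Z$ for all $t$. Hence $h(t)|_\zg = \Id$, which gives $h(t)\,\zg = \zg$, and consequently the center of $(\ngo,\mu(t))$ equals $\zg$ for all $t$, as claimed.

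The only point where one must be careful — and the main potential obstacle — is a circularity: formula \eqref{eqn_Pmunil} is written relative to the center of the \emph{current} bracket, so a direct attempt to verify that the fixed subspace $\zg$ stays central appears to require knowing in advance that $\zg$ remains the center, which would force a continuity/bootstrap argument (and a check that the dimension of the center cannot jump). This is circumvented by differentiating $h(t)\,Z$ instead of testing centrality of the fixed $\zg$: the endomorphism $P_{\mu(t)}$ is only ever evaluated on the center $h(t)\,\zg$ of the bracket that actually occurs at time $t$, where \eqref{eqn_Pmunil} makes it vanish unconditionally. This reduces the claim to the trivial constancy statement above and avoids any open–closed argument entirely.
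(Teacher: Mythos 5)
Your proof is correct, and it rests on the same two pillars as the paper's own argument: the representation $\mu(t)=h(t)\cdot\mu_0$ with $h'=-P_{\mu(t)}\,h$, $h(0)=\Id$, and the vanishing of $P_\nu$ on the center of $\nu$ coming from \eqref{eqn_Pmunil}. The difference is in how the argument is closed. The paper checks that the set of maps of the form $\minimatrix{h_\vg}{0}{0}{\Id}$ is invariant under the ODE for $h$ (for such $h$ the bracket $h\cdot\mu_0$ still has center $\zg$, so $P_{h\cdot\mu_0}$ is again block-diagonal with respect to the \emph{fixed} splitting $\ngo=\vg\oplus\zg$) and then invokes uniqueness of ODE solutions; this yields at once $h(t)|_\zg=\Id$ and $h(t)\vg\subseteq\vg$, and the latter is what allows the paper, right after the lemma, to deduce preservation of the SKT condition from Lemma \ref{lem_SKTonlyzg}, since the metrics $g_0(h(t)\cdot,h(t)\cdot)$ then change only on $\vg$. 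You instead differentiate $h(t)Z$ directly and evaluate $P_{\mu(t)}$ on the center of the bracket actually present at time $t$, which kills it unconditionally; this is more elementary (no ansatz, no uniqueness-of-solutions step) and, as you note, bypasses any bootstrap. The price is a reversal of logical order: to invoke \eqref{eqn_Pmunil} along the flow you must already know each $\mu(t)$ is SKT, which you import from the Streets--Tian preservation result together with the bracket-flow equivalence (Theorem \ref{thm_gaugedBF} with trivial gauge); this is legitimate and non-circular, whereas in the paper SKT-preservation comes out as a \emph{consequence} of the lemma's proof rather than being an input. Finally, if you later need the full block form of $h(t)$ (it is used in the rest of Section \ref{sec_nilmfd}), it follows from your conclusion in two lines: once the center of $\mu(t)$ equals $\zg$, formula \eqref{eqn_Pmunil} makes $P_{\mu(t)}$ block-diagonal in the fixed splitting, so the ODE forces $h(t)\vg\subseteq\vg$ as well.
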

\begin{proof}
By \cite[Thm. 5.1]{Lau15} the solution to the bracket flow can be expressed as $\mu(t) = h(t) \cdot \mu_0$, where $h(t)$ solves
\[
    \ddt h = - P_\mu \, h , \qquad h(0) = \Id.
\]
(Recall that here $Q_\mu^{ac} = 0$ in the notation of that theorem, by integrability.) Notice that if $h = \minimatrix{h_\vg}{0}{0}{\Id}$ then $P_{h\cdot \mu_0}$ has the form \eqref{eqn_Pmunil} with respect to the fixed decomposition $\ngo = \vg \oplus \zg$. Thus, by existence and uniqueness of ODE solutions it is clear that $h(t)$ must be in block form as above, and hence the center of $\mu(t)$ is $\zg$ for all $t$.
\end{proof}

From Lemma \ref{lem_SKTonlyzg} and the proof of Lemma \ref{lem_centerpreserved}, it follows that ---as expected--- the SKT condition is preserved by the flow. Indeed, the corresponding family of metrics $g(t) = g_0 (h(t) \cdot, h(t) \cdot)$ vary only on $\vg$.

It was shown in \cite{EFV15} (see also \cite{PV17}) that pluriclosed flow solutions consisting of left-invariant SKT metrics on $2$-step nilpotent Lie groups are \emph{immortal}, i.e.~defined for all $t\in [0,\infty)$. The main result of this section is the following

\begin{theorem}\label{thm_limitnil}
Let $(\ngo,\mu_0)$ be a $2$-step nilpotent Lie algebra, and let $\mu(t)$ denote the solution to the pluriclosed bracket flow \eqref{eqn_BFnil}. Then, $\mu(t)/\Vert \mu(t) \Vert$ converges as $t\to\infty$ to a non-flat algebraic pluriclosed soliton $\mu_\infty$.
\end{theorem}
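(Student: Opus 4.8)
The plan is to realize the normalized bracket flow as the negative gradient flow of a moment-map norm-squared functional and then invoke the convergence theory for such flows. Fix $J$ and $\ip$ on $\ngo$, and let $G := \Gl(\vg, J)$ act on $V(\ngo)$ through \eqref{eqn_actionbrackets} and the block embedding $h\mapsto\minimatrix{h}{0}{0}{\Id}$ of Section \ref{sec_nilmfd}. This is a real reductive group, with Lie algebra $\glg(\vg,J) = \ug(\vg,J)\oplus\sym(\vg,J)$ decomposed into its skew- and self-adjoint parts. Recall from \cite{nilRF} that $m(\mu) := \tfrac{4}{\Vert\mu\Vert^2}\Ricci_\mu \in \sym(\ngo)$ is the moment map for the full $\Gl(\ngo)$-action on $V(\ngo)$; the moment map for the subgroup $G$ is then the orthogonal projection of $m(\mu)$ onto $\sym(\vg, J)$ with respect to the trace inner product. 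Since that projection is exactly the operation $(\,\cdot\,)_\vg^{1,1}$, formula \eqref{eqn_Pmunil} yields
\[
  m_G(\mu) = \tfrac{4}{\Vert\mu\Vert^2}\,(\Ricci_\mu)_\vg^{1,1} = \tfrac{4}{\Vert\mu\Vert^2}\,P_\mu .
\]
Thus $P_\mu$ is a positive multiple of the $G$-moment map, and, exactly as in \cite[\S 3]{homRF}, this identification shows that, taking $U_\nu\equiv 0$ in Lemma \ref{lem_BFnormeqn}, the normalized solution $\nu(t) = \mu(t)/\Vert\mu(t)\Vert$ is, up to time reparameterization, the negative gradient flow on the unit sphere $S\subset V(\ngo)$ of the functional $F(\nu) := \Vert m_G(\nu)\Vert^2$, which on $S$ coincides with the polynomial $16\,\Vert P_\nu\Vert^2$.

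The heart of the argument is convergence. By \cite{EFV15} the pluriclosed flow on $2$-step nilmanifolds is immortal, so $\mu(t)$ --- and hence $\nu(t)$ --- is defined for all $t\geq 0$ and stays in the compact sphere $S$. As $\Ricci_\nu$, and therefore $P_\nu$, is a quadratic expression in the structure constants of $\nu$, the functional $F$ is real-analytic on $S$. The \L ojasiewicz gradient inequality for real-analytic functions then guarantees that a negative gradient trajectory of $F$ contained in the compact set $S$ has finite length, and hence converges, as the gradient-flow time tends to infinity, to a single limit $\mu_\infty\in S$, necessarily a critical point of $F|_S$. Since, as in \cite[\S 3.3]{homRF}, the reparameterization relating $\nu(t)$ to this gradient flow is proper, we conclude that $\nu(t)\to\mu_\infty$ as $t\to\infty$. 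Upgrading subconvergence --- which would follow merely from compactness of $S$ and monotonicity of $F$ --- to this genuine convergence to a single limit is the main obstacle, and it is precisely what the analyticity of $F$ buys us.

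It remains to identify the limit and to verify non-flatness. A critical point of $F|_S$ is a fixed point of its (reparameterized) negative gradient flow, which by the first paragraph is the normalized bracket flow \eqref{eqn_BFnorm} with gauge $U_\nu\equiv 0$; hence $\mu_\infty$ is such a fixed point, and Proposition \ref{prop_solitonfixedpoint} shows that $\mu_\infty$ is an algebraic pluriclosed soliton. Finally, being $2$-step nilpotent is a closed condition on $V(\ngo)$, so $\mu_\infty$ is again $2$-step nilpotent; as $\Vert\mu_\infty\Vert = 1$ it is nonzero, hence non-abelian. Since a non-abelian nilpotent Lie group admits no flat left-invariant metric, the soliton $(\N_{\mu_\infty}, J_{\mu_\infty}, g_{\mu_\infty})$ is non-flat, which completes the proof.
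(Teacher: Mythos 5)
Your proposal is correct and follows essentially the same route as the paper: the identification $\mmm_{\Gl(\vg,J)}(\mu) = \tfrac{4}{\Vert\mu\Vert^2}\, P_\mu$ (the paper's Lemma \ref{lem_nilPmumm}), the resulting interpretation of the normalized bracket flow as the negative gradient flow of $\Vert \mmm_{\Gl(\vg,J)}\Vert^2$ (Corollary \ref{cor_BFgradf}), {\L}ojasiewicz's theorem to upgrade subconvergence on the compact sphere to convergence to a single limit, and Proposition \ref{prop_solitonfixedpoint} to conclude that the fixed point is an algebraic pluriclosed soliton. The only difference is the final non-flatness step, where the paper reads it off directly from $\tr P_{\nu_\infty} \neq 0$ via Lemma \ref{lem_nilPmumm}, whereas you argue that the limit is a nonzero, hence non-abelian, $2$-step nilpotent bracket and invoke Milnor's result that non-abelian nilpotent Lie groups admit no flat left-invariant metric; both arguments are valid.
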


In what remains of this section we work towards a proof of this theorem. To that end, we explain a connection between the Bismut-Ricci form of left-invariant SKT metrics on $2$-step nilpotent Lie groups and real geometric invariant theory. Even though all what is needed for proving Theorem \ref{thm_limitnil} is Corollary \ref{cor_BFgradf}, which may be stated without any GIT terminology, we believe it is still worthwhile to explain these links that inspired our results.

Consider the group $\Gl(\vg, J)$ described in \eqref{eqn_defglvJ}, and denote its Lie algebra by $\glg(\vg,J)$.
If $\Or(\ngo) = \Or(\ngo,\ip)$, $\pg = \sym(\ngo,\ip)$ are the symmetric endomorphisms, and $\exp$ is the Lie exponential map, then $\Gl(\ngo) = \Or(\ngo) \exp( \pg)$  is a  Cartan decomposition, inducing the following Cartan decomposition on  $\Gl(\vg,J)$:
\[
  \Gl(\vg, J) = \Or(\vg,J) \exp(\pg({\vg,J})).
\]
Here $\Or(\vg, J) := \Or(\ngo) \cap \Gl(\vg, J)$ and $\pg(\vg,J) = \pg \cap \glg(\vg,J)$. The inner product $\ip$ on $\ngo$ induces naturally inner products on $V(\ngo)$, $\glg(\ngo)$ and $\glg(\vg, J) \subset \glg(\ngo)$, which we also denote by $\ip$.

Recall that for $\mu\in V(\ngo)$ a non-abelian nilpotent Lie bracket, it was shown in \cite[Rmk.~4.9]{Lau2003} that the \emph{moment map} (see \eqref{eqn_defmm} below for its definition) for the $\Gl(\ngo)$-action on $V(\ngo)$ \eqref{eqn_actionbrackets} is related to the Ricci curvature of the corresponding Riemannian metric by the formula
\begin{equation}\label{eqn_Ricnil}
  \mmm_{\Gl(\ngo)} (\mu) =  \frac{4}{\Vert\mu\Vert^2} \cdot \Ricci_\mu.
\end{equation}
Notice that the $2$-step assumption implies in particular that $\mu$ is non-abelian, hence \eqref{eqn_Ricnil} applies.

\begin{lemma}\label{lem_nilPmumm}
If $\mu \in V(\ngo)$ is a $2$-step nilpotent Lie bracket with center $\zg$ and $\vg := \zg^\perp \subset \ngo$, then the moment map for the action of $\Gl(\vg,J)$ on $V(\ngo)$ is given by
\[
  \mmm_{\Gl(\vg,J)} (\mu) = \frac{4}{\Vert \mu \Vert^2} \cdot P_\mu.
\]
In particular, $\tr P_\mu = -\tfrac{\Vert \mu \Vert^2}{2}$. 
\end{lemma}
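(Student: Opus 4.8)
The plan is to deduce both assertions from the known formula \eqref{eqn_Ricnil} for the ambient group together with the general principle that \emph{the moment map of a reductive subgroup is the orthogonal projection of the ambient moment map onto its $\pg$-part}. Recall (cf.~\eqref{eqn_defmm}) that the moment map of a reductive group $G$, with Cartan decomposition $\Lie(G) = \kg_G \oplus \pg_G$ into the compact part and the symmetric endomorphisms, is the $\pg_G$-valued map characterized by $\la \mmm_G(v), A\ra = \Vert v\Vert^{-2}\la \pi(A)v, v\ra$ for all $A \in \pg_G$ (both sides vanish on $\kg_G$, since $\pi(\kg_G)$ acts skew-symmetrically). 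If $H \subseteq G$ is a reductive subgroup whose Cartan decomposition is compatible, i.e.\ $\pg_H = \pg_G \cap \Lie(H)$, then for every $A \in \pg_H \subseteq \pg_G$ one has $\la \mmm_H(v), A\ra = \Vert v\Vert^{-2}\la\pi(A)v,v\ra = \la\mmm_G(v), A\ra$; as both $\mmm_H(v)$ and $\proy_{\pg_H}\mmm_G(v)$ lie in $\pg_H$, this forces $\mmm_H(v) = \proy_{\pg_H}\mmm_G(v)$. The compatibility needed here is exactly the induced Cartan decomposition $\Gl(\vg,J) = \Or(\vg,J)\exp(\pg(\vg,J))$ recorded above, with $\pg(\vg,J) = \pg \cap \glg(\vg,J)$, so the principle applies to $G = \Gl(\ngo)$, $H = \Gl(\vg,J)$ and $v = \mu$.

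Combining this with \eqref{eqn_Ricnil} I obtain
\[
  \mmm_{\Gl(\vg,J)}(\mu) = \proy_{\pg(\vg,J)}\, \mmm_{\Gl(\ngo)}(\mu) = \frac{4}{\Vert\mu\Vert^2}\, \proy_{\pg(\vg,J)}\Ricci_\mu,
\]
so it remains only to check $\proy_{\pg(\vg,J)}\Ricci_\mu = P_\mu$. For this I would observe that the orthogonal projection of a symmetric endomorphism onto $\pg(\vg,J)$ factors as two successive orthogonal projections: first onto the $\vg$-block, sending $R \in \sym(\ngo)$ to $(R)_\vg = \proy_\vg \circ R|_\vg$ (the off-diagonal and $\zg$-blocks being $\ip$-orthogonal to $\pg(\vg,J)$), and then onto the $J$-commuting symmetric endomorphisms, sending $S$ to $\tfrac12(S - JSJ)$. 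The latter is precisely the $(1,1)$-part: for $b(X,Y) = \la SX, Y\ra$ one has $b^{1,1}(X,Y) = \tfrac12(\la SX,Y\ra + \la SJX, JY\ra) = \la \tfrac12(S - JSJ)X, Y\ra$, using $J^t = -J$. Hence $\proy_{\pg(\vg,J)}\Ricci_\mu = (\Ricci_\mu)_\vg^{1,1} = P_\mu$ by \eqref{eqn_Pmunil}, which establishes the moment-map formula.

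For the trace identity I would pair the resulting formula with $\proy_\vg = \minimatrix{\Id}{0}{0}{0} \in \pg(\vg,J)$. On one hand $\la \mmm_{\Gl(\vg,J)}(\mu), \proy_\vg\ra = \tfrac{4}{\Vert\mu\Vert^2}\tr(P_\mu\,\proy_\vg) = \tfrac{4}{\Vert\mu\Vert^2}\tr P_\mu$, since $P_\mu$ is supported on the $\vg$-block. On the other hand the defining property gives $\la\mmm_{\Gl(\vg,J)}(\mu), \proy_\vg\ra = \Vert\mu\Vert^{-2}\la\pi(\proy_\vg)\mu, \mu\ra$. Here the $2$-step condition does the work: since $\mu$ takes values in $\zg$ (so $\proy_\vg\,\mu(\cdot,\cdot) = 0$), while $\proy_\vg$ restricts to the identity on $\vg$ and annihilates $\zg$, a direct evaluation on an orthonormal basis adapted to $\ngo = \vg\oplus\zg$ shows $\pi(\proy_\vg)\mu = -2\mu$ as elements of $V(\ngo)$. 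Therefore $\la\pi(\proy_\vg)\mu,\mu\ra = -2\Vert\mu\Vert^2$, and comparing the two expressions yields $\tr P_\mu = -\tfrac{\Vert\mu\Vert^2}{2}$.

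The main obstacle I anticipate is conceptual rather than computational: justifying that $\mmm_{\Gl(\vg,J)}$, defined intrinsically via the Cartan decomposition of $\Gl(\vg,J)$, genuinely coincides with the $\pg(\vg,J)$-projection of $\mmm_{\Gl(\ngo)}$. This rests on the compatibility $\pg(\vg,J) = \pg \cap \glg(\vg,J)$ and on $\pi(\kg)$ acting skew-symmetrically, so that the compact components contribute nothing to either moment map. Once this is in place, the identification of the projection with the operation ``$\vg$-block followed by $(1,1)$-part'' and the short $2$-step verification $\pi(\proy_\vg)\mu = -2\mu$ are routine.
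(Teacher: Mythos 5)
Your proposal is correct and follows essentially the same route as the paper's proof: both deduce $\mmm_{\Gl(\vg,J)}(\mu) = \proy_{\pg(\vg,J)}\mmm_{\Gl(\ngo)}(\mu)$ from the defining property \eqref{eqn_defmm}, combine this with \eqref{eqn_Ricnil} and \eqref{eqn_Pmunil}, and obtain the trace identity by testing against $A = \proy_\vg$ together with $\pi(\proy_\vg)\mu = -2\mu$. The only difference is that you spell out details the paper leaves implicit (the subgroup-projection principle and the identification of $\proy_{\pg(\vg,J)}$ with taking the $\vg$-block followed by the $(1,1)$-part), which is a faithful elaboration rather than a different argument.
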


\begin{proof}
By definition of moment map for the action of a real reductive Lie group $\G = \K \exp(\pg)$ on an inner-product vector space $(V,\ip)$ (see \cite{GIT}) we have that
\begin{equation}\label{eqn_defmm}
  \la \mmm_\G (\mu), A \ra = \frac{1}{\Vert \mu \Vert^2} \, \la \pi(A) \mu, \mu \ra, \qquad \forall A\in \pg.
\end{equation}
Since the data involved in this equation for $\G = \Gl(\vg,J)$ is induced from the corresponding data for $\G = \Gl(\ngo)$, it follows that $\mmm_{\Gl(\vg,J)} (\mu) = \proy(\mmm_{\Gl(\ngo)} (\mu))$, where $\proy : \pg \to \pg(\vg,J)$ denotes orthogonal projection. This fact, together with \eqref{eqn_Pmunil} and \eqref{eqn_Ricnil} yield the desired formula for $P_\mu$. The claim for the trace follows by applying \eqref{eqn_defmm} for $A = \proy_\vg : \ngo \to \ngo$ the orthogonal projection onto $\vg$, and using that $\pi(\proy_\vg)\mu = -2 \mu$ for a two-step nilpotent $\mu$. 
\end{proof}

As explained in Section \ref{sec_prelim} (see Lemma \ref{lem_BFnormeqn}), the normalized solution $\nu(t) :=  \, \mu(t)/\Vert \mu(t) \Vert$ can be interpreted as a solution of the normalized bracket flow equation
\[
  \ddt \nu = - \pi(P_\nu + r_\nu \Id_\ngo) \nu,
\]
where the scalar functional $r_\nu$ is given by $r_\nu = \tr (P_\nu \mmm_{\Gl(\ngo)}(\nu))$. Lemma \ref{lem_nilPmumm} implies that $r_\nu = \unc \,  \Vert \mmm_{\Gl(\vg,J)}(\nu)\Vert^2$, and from this we can easily deduce the following

\begin{corollary}\label{cor_BFgradf}
The norm-normalized pluriclosed bracket flow for $2$-step nilpotent Lie brackets coincides, up to time reparameterization, with the negative gradient flow of the real-analytic functional
\[
    F(\nu) = \big\Vert \mmm_{\Gl(\vg,J)}(\nu) \big\Vert^2 = \frac{16}{\Vert \nu\Vert^4} \cdot  {\Vert P_\nu \Vert^2}.
\]
\end{corollary}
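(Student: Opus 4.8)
The statement to prove is Corollary \ref{cor_BFgradf}, asserting that the norm-normalized pluriclosed bracket flow for $2$-step nilpotent Lie brackets coincides, up to time reparameterization, with the negative gradient flow of $F(\nu) = \Vert \mmm_{\Gl(\vg,J)}(\nu) \Vert^2 = 16 \Vert P_\nu \Vert^2 / \Vert \nu \Vert^4$. The key inputs are already assembled in the preceding lemmas: Lemma \ref{lem_nilPmumm} identifies $\mmm_{\Gl(\vg,J)}(\mu) = \tfrac{4}{\Vert\mu\Vert^2} P_\mu$, and Lemma \ref{lem_BFnormeqn} describes the normalized flow. My plan is to compute the Euclidean gradient of $F$ on the unit sphere in $V(\ngo)$ and match it, up to a positive scalar, with the vector field $-\pi(P_\nu + r_\nu \Id_\ngo)\nu$ driving the normalized flow.

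\emph{First}, I would record the two equivalent expressions for $F$ in the corollary and verify the second equality: since $\mmm_{\Gl(\vg,J)}(\nu) = \tfrac{4}{\Vert\nu\Vert^2} P_\nu$ by Lemma \ref{lem_nilPmumm}, taking norms gives $\Vert \mmm_{\Gl(\vg,J)}(\nu)\Vert^2 = \tfrac{16}{\Vert\nu\Vert^4}\Vert P_\nu\Vert^2$, which is exactly the claimed identity. On the unit sphere $\Vert\nu\Vert=1$, both expressions reduce to $16\Vert P_\nu\Vert^2$. \emph{Second}, the standard fact from real GIT (as in \cite{GIT}) is that on the unit sphere the gradient of the square-norm of the moment map $\Vert\mmm_{\G}\Vert^2$ is, up to a positive constant, the tangential part of $\pi(\mmm_\G(\nu))\nu$; more precisely one uses the defining relation \eqref{eqn_defmm} together with the self-adjointness of $\pi(A)$ for $A\in\pg$ to differentiate $F$. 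Carrying out this differentiation for the group $\Gl(\vg,J)$ and substituting $\mmm_{\Gl(\vg,J)}(\nu) = \tfrac{4}{\Vert\nu\Vert^2}P_\nu$, one obtains that the (spherical) gradient of $F$ is a positive multiple of the tangential component of $\pi(P_\nu)\nu$.

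\emph{Third}, I would match this with the normalized flow field. By Lemma \ref{lem_BFnormeqn} (with the trivial gauge $U_\nu=0$, as in \eqref{eqn_BFnil}), the normalized solution satisfies $\ddt\nu = -\pi(P_\nu + r_\nu\Id_\ngo)\nu$, where $r_\nu = \la\pi(P_\nu)\nu,\nu\ra$; by the remark in the excerpt, $-\pi(P_\nu + r_\nu\Id_\ngo)\nu$ is exactly the projection of the unnormalized field $-\pi(P_\nu)\nu$ onto the tangent space of the sphere. Since $\pi(P_\nu)$ is self-adjoint (as $P_\nu\in\pg(\vg,J)$ is symmetric), the computation from the previous step shows this tangential projection equals a negative multiple of $\grad^{\text{sphere}} F$. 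Thus the normalized flow is the negative gradient flow of $F$ up to the positive time-reparameterizing factor, and real-analyticity of $F$ follows because $\nu\mapsto P_\nu$ is a rational (hence real-analytic) map of $\nu$ away from the locus where the relevant structures degenerate, which does not occur on the orbit under consideration.

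\emph{The main obstacle} I anticipate is the careful identification of the gradient computation with the moment-map formalism: one must be precise about which inner product is used on $\pg(\vg,J)$, confirm that the projection $\proy:\pg\to\pg(\vg,J)$ intertwines correctly with $\pi$ and with restriction of $\mu$ to the $2$-step structure, and track the positive constants so that the sign of the gradient flow comes out correctly. The equivariance and self-adjointness properties of $\pi$ restricted to the symmetric part, already invoked in the proof of Lemma \ref{lem_nilPmumm} via \eqref{eqn_defmm}, are what make this matching clean; the real content is bookkeeping rather than a new idea, which is why the corollary follows \emph{easily} once Lemma \ref{lem_nilPmumm} is in hand.
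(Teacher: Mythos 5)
Your proposal is correct and takes essentially the same route as the paper's proof: both substitute $\mmm_{\Gl(\vg,J)}(\nu) = \tfrac{4}{\Vert\nu\Vert^2}P_\nu$ from Lemma \ref{lem_nilPmumm} into the GIT formula for the gradient of $\Vert \mmm_{\Gl(\vg,J)}\Vert^2$ (the paper cites \cite[Lemma 7.2]{GIT}, whose term $\Vert \mmm(\nu)\Vert^2\,\Id_\ngo$ is precisely your tangential-projection correction, since $\pi(\Id_\ngo)\nu = -\nu$), and then observe that this gradient is a negative constant multiple of the normalized bracket flow field $-\pi(P_\nu + r_\nu \Id_\ngo)\nu$ from Lemma \ref{lem_BFnormeqn}. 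The two arguments match step for step, differing only in phrasing.
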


\begin{proof}
According to \cite[Lemma 7.2]{GIT}, if $\mmm(\nu) := \mmm_{\Gl(\vg,J)}(\nu)$ then we have that
\[
  (\nabla F)_\nu = 4 \, \pi \big( \mmm(\nu) + \Vert \mmm(\nu) \Vert^2 \, \Id_\ngo \big) \, \nu.
\]
On the other hand, the above remarks imply that the norm-normalized bracket flow equation is given by
\[
%\begin{equation}\label{eqn_normnormBF}
    \ddt \nu = - \frac14 \pi( \mmm(\nu) + \Vert \mmm(\nu) \Vert^2 \, \Id_\ngo) \, \nu.
%\end{equation}
\]
Since both vector fields differ only by a scalar multiple, the corollary follows.
\end{proof}

We are now in a position to prove the main result of this section, and to deduce Theorem \ref{mainthm_nil} from it.

\begin{proof}[Proof of Theorem \ref{thm_limitnil}]
By compactness of the unit sphere in $V(\ngo)$, the family $\nu(t) := \mu(t) / \Vert \mu(t)\Vert$ has an accumulation point $ \nu_\infty$.
By Corollary \ref{cor_BFgradf} and {\L}ojasiewicz's theorem on real-analytic gradient flows \cite{Loj63}, the length of the curve $[0,\infty) \to V(\ngo)$, $t \mapsto \nu(t)$, is finite. In particular, $\nu(t) \to  \nu_\infty$ as $t\to \infty$, and $\nu_\infty$ is a fixed point of the norm-normalized bracket flow equation~ \eqref{eqn_BFnorm}. Since in this case there is no gauge, i.e.~$U_{\nu(t)} \equiv 0$, this implies that
\[
  P_{\nu_\infty} + r_{\nu_\infty} \Id_\ngo \in \Der(\nu_\infty).
\]
Therefore, $\nu_\infty$ is an algebraic pluriclosed soliton (see Definition \ref{def_soliton}). It is non-flat because  $\tr P_{\nu_\infty} = -\unc \neq 0$ by Lemma \ref{lem_nilPmumm}.
 % see Section \ref{sec_prelim}). {\alert better reference for the last statement?}
\end{proof}

\begin{proof}[Proof of Theorem \ref{mainthm_nil}]
In the notation of Theorem \ref{thm_limitnil} we have that $\nu(t):=\mu(t)/\Vert \mu(t)\Vert \to \mu_\infty$ as $t\to\infty$, where $\mu_\infty$ is a nilpotent Lie bracket (possibly non-isomorphic to $\mu(0)$) corresponding to an algebraic pluriclosed soliton on a simply-connected nilpotent Lie group $\N_{\mu_\infty}$. By \cite[Cor.~6.20(v)]{spacehm} this yields $C^\infty$ convergence of $(\N_{\nu(t)}, g_{\nu(t)})$ to $(\N_{\mu_\infty}, g_{\mu_\infty})$ (recall that $\N_{\nu(t)}$ and $\N_{\mu\infty}$ are diffeomorphic to $\RR^{2n}$, so convergence hear simply means $C^{\infty}$ uniformly over compact subsets of $\RR^{2n}$). Theorem \ref{mainthm_nil} would now follow from this and Theorem \ref{thm_gaugedBF} (no gauging), provided we show that $\Vert \mu(t) \Vert \sim  t^{-1/2}$ as $t\to \infty$. This is a well-known property of the moment map's negative gradient flow, but for convenience of the reader we provide a proof in our particular case.

We compute the evolution equation for $\Vert \mu(t) \Vert^2$ along the pluriclosed bracket flow \eqref{eqn_BFnil}. And for that we use Lemma \ref{lem_nilPmumm}, which gives the formula $\la P_\mu, A \ra = \unc \, \la \pi(A)\mu, \mu \ra$:
\[
	\ddt \Vert \mu \Vert^2  = 2 \, \big\la \ddt \mu, \mu \big\ra = - 2 \, \la \pi(P_\mu) \mu, \mu\ra  = - 8 \, \Vert P_\mu \Vert^2. 
\]
Now on one hand, $\Vert P_\mu \Vert^2$ can be bounded below by $(\dim \vg)^{-1} \, (\tr P_\mu)^2$, and again by Lemma \ref{lem_nilPmumm} the latter equals $(4 \dim \vg)^{-1} \, \Vert \mu \Vert^4$. On the other hand, 
\[
	\Vert P_\mu \Vert^2 = \unc \, \la \pi(P_\mu) \mu, \mu \ra \leq \unc \, C_\pi \, \Vert P_\mu \Vert \, \mu^2,
\]
where $C_\pi >0$ is some constant depending only on the dimension. The above gives an upper bound for $\Vert P_\mu \Vert^2$, of the form $C \, \Vert \mu \Vert^4$. Therefore, the evolution equation for $\Vert \mu \Vert^2$ can be compared in both directions with an ODE of the form $y' = - c \, y^2$, and from this it is clear that $\Vert \mu(t) \Vert^2 \sim t^{-1}$ as $t\to \infty$, thus concluding the proof.
\end{proof}

\section{Almost-abelian solvmanifolds}\label{sec_almostabelian}

In this section we study the existence of SKT metrics and their evolution under the pluriclosed flow on a simply-connected, \emph{almost-abelian} Lie group $\G$, i.e.~ one whose Lie algebra $\ggo$ has a codimension-one abelian ideal $\ngo$. Notice that such $\G$ is in particular solvable.

If $(J,g)$ is a left-invariant Hermitian structure on $\G$, one easily sees that there exists an orthonormal basis $\{e_1, \ldots , e_{2n}\}$ for $\ggo$ such that
\[
\ngo = \vspan_\RR \la e_1, \ldots , e_{2n-1} \ra , \quad J e_1 = e_{2n}, \quad J (\ngo_1) \subset \ngo_1,
\]
where $\ngo_1 = \vspan_\RR \la e_2, \ldots, e_{2n-1} \ra$. We also set $J_1 :=  J |_{\ngo_1}$.

\begin{lemma}\label{lem_integrmuA}
The complex structure $J$ is integrable if and only if $\ad e_{2n}$ leaves $\ngo_1$ invariant, and $A := (\ad e_{2n})|_{\ngo_1}$ commutes with $J_1.$ %the restriction of $J$ to that subspace.
If this is the case, then $\ad e_{2n}$ is given by
% the matrix of $\ad e_{2n}$ in the basis $\{ e_1, \ldots, e_{2n}\}$ is given by
\begin{equation}\label{eqn_ade2n}
  \ad e_{2n} = \threematrix{a}{0}{0}{v}{A}{0}{0}{0}{0},\qquad a\in \RR, \quad v\in \ngo_1, \quad A \in \glg(\ngo_1),  \quad [A,J_1]=0.
\end{equation}
\end{lemma}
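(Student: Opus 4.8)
The plan is to characterise integrability through the Nijenhuis tensor. A left-invariant almost complex structure on $\G$ is integrable if and only if
\[
  N(X,Y) := [JX,JY] - [X,Y] - J[JX,Y] - J[X,JY]
\]
vanishes for all $X,Y\in\ggo$. Two structural features do most of the work. First, $\ngo$ is an abelian ideal, so the only nonzero brackets are $[e_{2n},Z]=\ad e_{2n}(Z)$ with $Z\in\ngo$, and $\ad e_{2n}$ preserves $\ngo$. Second, $J$ interchanges $e_1$ and $e_{2n}$ (with $Je_1=e_{2n}$, $Je_{2n}=-e_1$) while preserving $\ngo_1$. I would record the components of $\ad e_{2n}|_{\ngo_1}$ relative to $\ngo=\RR e_1\oplus\ngo_1$ by writing $\ad e_{2n}(Y)=\beta(Y)\,e_1+AY$ for $Y\in\ngo_1$, where $\beta\in\ngo_1^*$ and $A:=\proy_{\ngo_1}\circ\,\ad e_{2n}|_{\ngo_1}\in\glg(\ngo_1)$. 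The content of the lemma is then that $J$ is integrable precisely when $\beta=0$ and $[A,J_1]=0$.

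Next I would cut down the pairs on which $N$ must be tested. On $\ngo_1\times\ngo_1$ all four brackets in $N$ involve only elements of the abelian subspace $\ngo$ (since $J$ preserves $\ngo_1$) and hence vanish; and $N(e_1,e_{2n})=N(e_1,Je_1)=-J\,N(e_1,e_1)=0$, using the standard identity $N(J\cdot,\cdot)=N(\cdot,J\cdot)=-J\,N(\cdot,\cdot)$. That same identity gives $N(e_{2n},Y)=N(Je_1,Y)=-J\,N(e_1,Y)$, so the pairs involving $e_{2n}$ reduce to those involving $e_1$. Hence the entire integrability condition collapses to the single requirement $N(e_1,Y)=0$ for every $Y\in\ngo_1$.

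The main computation is then short. Because $[e_1,\cdot]=0$ on $\ngo$, two of the four terms drop out and
\[
  N(e_1,Y) = \ad e_{2n}(J_1Y) - J\big(\ad e_{2n}(Y)\big).
\]
Substituting $\ad e_{2n}(Y)=\beta(Y)e_1+AY$ and $\ad e_{2n}(J_1Y)=\beta(J_1Y)e_1+AJ_1Y$, and using $Je_1=e_{2n}$ together with $J|_{\ngo_1}=J_1$, I would read off the three components along $\RR e_1$, $\ngo_1$ and $\RR e_{2n}$ as $\beta(J_1Y)$, $AJ_1Y-J_1AY$, and $-\beta(Y)$ respectively. Demanding that these vanish for all $Y\in\ngo_1$ forces $\beta=0$ and $[A,J_1]=0$, which establishes both implications at once. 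Finally, $\beta=0$ says exactly that $\ad e_{2n}$ leaves $\ngo_1$ invariant, so $A=\ad e_{2n}|_{\ngo_1}$; combining this with the facts that $\ad e_{2n}$ maps $\ngo$ into $\ngo$ (no $e_{2n}$-component) and that $\ad e_{2n}(e_1)=ae_1+v$ yields the block form \eqref{eqn_ade2n}.

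I do not expect a serious obstacle here; the only point requiring care is that $J$ does \emph{not} preserve $\ngo$, so the brackets $[JX,JY]$ can leave $\ngo$, and one must keep track of the $\RR e_1$- and $\RR e_{2n}$-components produced by $J$. It is precisely the vanishing of the off-diagonal covector $\beta$ (the $\RR e_1$-component of $\ad e_{2n}|_{\ngo_1}$) that is the non-obvious part of the statement, whereas $[A,J_1]=0$ is the expected K\"ahler-type compatibility and emerges from the $\ngo_1$-component.
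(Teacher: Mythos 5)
Your proposal is correct and takes essentially the same route as the paper: both reduce integrability, using that $\ngo$ is abelian, to the single condition $N(e_1,Y)=0$ for all $Y\in\ngo_1$, and then read off that $\ad e_{2n}$ preserves $\ngo_1$ and that $A$ commutes with $J_1$. The only differences are cosmetic --- you justify the reduction step (which the paper dismisses with ``one easily checks'') via the identity $N(J\cdot,\cdot)=-J\,N(\cdot,\cdot)$, and you extract the conclusion by splitting $\ad e_{2n}|_{\ngo_1}$ into the components $\beta$ and $A$, whereas the paper observes instead that $J[e_{2n},e_i]=[e_{2n},Je_i]\in\ngo\cap J\ngo=\ngo_1$.
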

\begin{proof}
One easily checks that the vanishing of the Nijenhuis tensor reduces to
\[
  0 = N_J(e_1, e_i) =  [e_1,e_i] + J [J e_1,e_i] + J [e_1, J e_i] -  [J e_1, J e_i]  = J [e_{2n},e_i] -  [e_{2n}, J e_i],
\]
for all $i=2, \ldots, 2n-1$. If this is the case then $[e_{2n}, J e_i] =J [e_{2n},e_i] \in \ngo \cap J \ngo = \ngo_1$, from which $\ad e_{2n}$ preserves $\ngo_1$ and $A$ commutes with $J_1$. The converse is clear.
% thus $\ad e_{2n}|_{\ngo_1} J_1= J_1 \ad e_{2n}|_{\ngo_1}.$ Moreover, since
% \[
%   [e_{2n}, J e_i] \subset \ngo, \quad J [e_{2n}, e_i] \subset J \ngo \subset \ngo_1 \oplus \RR \, e_{2n},
% \]
% we have that
% \[
%   [e_{2n}, J e_i] \subset \vspan \la e_1, \ldots, e_{2n-1} \ra \cap \vspan \la e_2, \ldots, e_{2n} \ra = \ngo_1.
% \]
% The matrix of $\ad e_{2n}$ with respect to the basis $\{e_1, \ldots , e_{2n}\}$ is then given as in the statement.
\end{proof}

% Since we identify $\ggo \simeq \RR^{2n}$ as vector spaces, with $g$ corresponding to

\begin{notation}\label{not_skt}
We fix now and for the rest of this section a real inner product vector space $(\ggo, \ip)$ with an orthogonal decomposition
% together with a direct sum decomposition into subspaces
\begin{equation}\label{eqn_decggo}
   \ggo = \RR e_1 \oplus \ngo_1 \oplus \RR e_{2n}, \qquad \ngo := \RR e_1 \oplus \ngo_1,
\end{equation}
% endowed with a fix inner product $\ip$ making the above decomposition orthogonal,
with $e_1, e_{2n}$ unitary. We also fix a $\ip$-compatible complex structure $J$ preserving $\ngo_1$ and with $J e_1 = e_{2n}$. We denote by $\mu = \mu(a,v,A)$ an almost-abelian Lie bracket on $\ggo$ with $\ad e_{2n}$ given as in \eqref{eqn_ade2n}, so that $J$ defines an integrable complex structure on the Lie algebra $(\ggo, \mu)$.
% By Lemma \ref{lem_integrmuA} this amounts to saying that $[A,J_1] = 0$.
\end{notation}

\begin{example}\label{ex_t2inoue}
In \cite[$\S$3]{FT09} the authors study a $6$-dimensional, almost-abelian solvable Lie algebra $\sg_{a,b}$ which according to Notation \ref{not_skt} may be described as $(\RR^6,\mu(a,v,A))$, with
 % $a,b\in \RR\backslash \{ 0\}$, with $v=0$ and
\[
   v = \left(  \begin{array}{c} 0 \\ 0 \\ 0 \\ 0 \end{array} \right), \qquad
    A = \left(\begin{array}{cccc}  -\tfrac{a}{2} & 0 & 0 & 0 \\ 0 & 0 & b & 0 \\  0 & -b & 0 & 0 \\ 0 & 0 & 0 &  -\tfrac{a}{2} \end{array} \right), \qquad a,b  \in \RR \backslash \{ 0\}.
\]
Notice that we have rearranged the basis for consistency within this article. The complex structure is given by $J e_1 = e_6$, $Je_2 = e_5,$ $Je_3 = e_4$, thus by Lemma \ref{lem_integrmuA}, it defines an integrable complex structure on the corresponding simply-connected Lie group $\Ss_{a,b}$. The group $\Ss_{1,\frac{\pi}{2}}$ admits a cocompact lattice $\Gamma$, and the compact manifold $M^6 = \Gamma \backslash \Ss_{1,\frac{\pi}{2}}$ is the total space of a $\mathbb{T}^2$-bundle over an Inoue surface $S_M$.
\end{example}

\subsection{SKT almost-abelian solvmanifolds}\label{sec_SKTaa}

In this section we determine necessary and sufficient conditions for an almost-abelian Lie group $\G$ to admit a left-invariant SKT metric. Following Notation \ref{not_skt}, we will write $\mu$ as short for the infinitesimal data $(\ggo, \ip, J, \mu)$. By Proposition~ \ref{prop_movingbrackets}, the orbit $\Gl(\ggo, J) \cdot \mu \subset V(\ggo)$ parameterizes all left-invariant Hermitian structures on $\G$. In this way, the SKT condition reduces to an algebraic condition on the Lie bracket:
% , up to equivariant biholomorphic isometry. The SKT condition
% for the data $(\ggo, \ip, J, \mu)$
 % thus reduces to a condition on the bracket $\mu$:

\begin{lemma}\label{lem_SKT}
If $\mu = \mu(a,v,A)$ is almost-abelian then  the metric $g$ is SKT if and only if
\begin{equation}\label{eqn_SKTcondition}
  aA+A^2+A^tA \in \sog(\ngo_1),
\end{equation}
where $(\cdot)^t$ denotes the transpose with respect to $\ip$.
%  is a skew-symmetric matrix.
% Let $(G_{\mu},J,\omega)$ be the Hermitian Lie group with $\mu = \mu_{A,a,v}.$ The metric $g = \omega (\cdot , J \cdot)$ is SKT-metric if and only if $aA+A^2+A^tA$ is a skew-symmetric matrix.
\end{lemma}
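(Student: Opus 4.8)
The plan is to compute the torsion 3-form $c$ explicitly using formula \eqref{eqn_formulac}, and then impose the closedness condition $dc=0$ via \eqref{eqn_formuladc}. Since $\G$ is almost-abelian with abelian ideal $\ngo$, the only nonzero brackets come from $\ad e_{2n}|_\ngo$, which is encoded by $a$, $v$, and $A$. This should drastically simplify both $c$ and $dc$: any bracket $\mu(X,Y)$ vanishes unless one of $X,Y$ equals $e_{2n}$ (up to the $\ngo$-components), so most terms in the multilinear expressions drop out. First I would write out $c$ on the basis $\{e_1,\ldots,e_{2n}\}$ using \eqref{eqn_formulac}, keeping in mind $Je_1 = e_{2n}$, $J e_{2n} = -e_1$, and $J$ preserving $\ngo_1$. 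Because $[\ngo,\ngo]=0$, the bracket $[JX,JY]$ is nonzero only when one of $JX, JY$ has a nonzero $e_{2n}$-component, which pins down which triples $(X,Y,Z)$ give a nonvanishing $c$.

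**Next I would compute $dc$** from \eqref{eqn_formuladc}. Here the key structural observation is that $dc(W,X,Y,Z)$ is a sum of terms $c([\cdot,\cdot],\cdot,\cdot)$, and each inner bracket $[\cdot,\cdot]$ already lands in $\ngo$ (in fact the $[e_{2n},\cdot]$ brackets land in $\ngo$ and all $\ngo$-$\ngo$ brackets vanish). Because $c$ requires an $e_{2n}$-direction to be nonzero, and the bracket has consumed the only available $e_{2n}$, many of the six terms vanish automatically. I expect $dc$ to reduce to a single scalar-valued condition indexed by pairs of vectors in $\ngo_1$, built from the operator $A$ together with the scalar $a$. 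Carrying out the contraction carefully, the combination $aA + A^2 + A^t A$ should emerge as the coefficient governing the antisymmetric part, so that $dc=0$ is equivalent to this operator lying in $\sog(\ngo_1)$, i.e.\ being skew-symmetric with respect to $\ip$.

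**The main obstacle** will be the bookkeeping of which of the many multilinear terms survive, and correctly tracking the action of $J$ and the transpose $A^t$ when passing from the bracket structure to the inner-product pairings. In particular, producing the symmetric combination $A^2 + A^t A$ requires careful handling: the $A^2$ piece comes from composing two $\ad e_{2n}$ actions, while the $A^t A$ piece arises from the metric pairing $\langle [JX,JY],\cdot\rangle$ after transposing $A$ across the inner product, and ensuring the $a A$ term appears with the right coefficient means tracking the $a$-eigendirection $e_1$ (with $J e_1 = e_{2n}$) consistently. A useful simplification is that the $v$ and $a$ data interact with $e_1$ in a controlled way, and one should check that the condition is independent of $v$ (this is forced by the gauge freedom of Remark \ref{rmk_gauge}, since $v$ can be absorbed by a unitary change of basis). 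Once the correct bilinear form is identified, the equivalence ``$dc=0 \iff aA+A^2+A^tA \in \sog(\ngo_1)$'' follows by noting that a matrix lies in $\sog(\ngo_1)$ precisely when its symmetric part vanishes, matching the symmetric condition extracted from $dc$.
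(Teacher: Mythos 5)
Your overall strategy---compute $c$ from \eqref{eqn_formulac}, compute $dc$ from \eqref{eqn_formuladc}, and let the almost-abelian structure kill most terms---is exactly the paper's route, and it does work. But two structural claims your plan leans on are incorrect, and the first, taken literally, would derail the computation. The vanishing criterion is backwards: since $c(X,Y,Z)=-g([JX,JY],Z)-g([JY,JZ],X)-g([JZ,JX],Y)$ brackets the \emph{$J$-images} of its arguments and $Je_1=e_{2n}$, what $c$ needs in order to be nonzero is an $e_1$-direction among its arguments, not an $e_{2n}$-direction: one checks that $c$ vanishes on $\Lambda^3\ngo_1$ and on triples $(e_{2n},W,X)$ with $W,X\in\ngo_1$, while $c(e_1,W,X)=-2\la S(A)JW,X\ra$ is generically nonzero, where $S(A)=\unm(A+A^t)$. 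Consequently, in the one identity that survives, $dc(e_{2n},e_1,Y,Z)=0$ for $Y,Z\in\ngo_1$, the term $-c(\mu(e_{2n},e_1),Y,Z)$ is nonzero precisely \emph{because} the inner bracket $\mu(e_{2n},e_1)=ae_1+v$ reintroduces an $e_1$-component; this is the source of the $aA$ summand in \eqref{eqn_SKTcondition}. Your heuristic (``the bracket has consumed the only available $e_{2n}$, so the term vanishes'') would declare all six terms of $dc(e_{2n},e_1,Y,Z)$ zero and lead to the false conclusion that every almost-abelian Hermitian metric is SKT. The three terms that genuinely vanish do so because $\ngo$ is abelian ($[e_1,Y]=[e_1,Z]=[Y,Z]=0$), not by any $e_{2n}$-counting.

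Second, the parenthetical you use to dispose of $v$ is false: $v$ cannot be absorbed by a unitary change of basis in the sense of Remark \ref{rmk_gauge}. The brackets $\mu(a,v,A)$ and $\mu(a,0,A)$ define the same Lie group but genuinely different Hermitian structures: in Example \ref{exa_shrinksteady} one is a shrinking and the other a steady pluriclosed soliton, and by Proposition \ref{prop_genk} the generalized K\"ahler condition forces $v=0$. The $v$-independence of the SKT condition is an \emph{outcome} of the computation, not an a priori gauge fact: the only place $v$ could enter $dc(e_{2n},e_1,Y,Z)$ is through $-c(v,Y,Z)$, which vanishes because $v,Y,Z\in\ngo_1$. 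With these two points repaired, your computation reduces exactly to the paper's: the surviving terms give $dc(e_{2n},e_1,Y,Z)=\la(aS(A)+S(A)A+A^tS(A))JY,Z\ra$, and since $aS(A)+S(A)A+A^tS(A)=S(aA+A^2+A^tA)$ and $J$ is invertible on $\ngo_1$, the condition $dc=0$ is equivalent to \eqref{eqn_SKTcondition}.
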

\begin{proof}
We apply the formulae for $c$ and $dc$ given in \eqref{eqn_formulac}, \eqref{eqn_formuladc}. Since $\ngo_1$ is an abelian ideal preserved by $J$, it is clear that $c(X,Y,Z) = 0$ when all the entries lie in $\ngo_1$. One quickly checks that this implies that $dc(W,X,Y,Z) = 0$ if three of the entries lie in $\ngo_1$. Therefore, the SKT condition $dc = 0$ is equivalent to
\[
  dc(e_{2n}, e_1, Y, Z) = 0, \qquad \forall \, Y,Z\in \ngo_1.
\]
Using that $\ngo$ is abelian it is clear that the last three terms in \eqref{eqn_formuladc} vanish, thus
\begin{align*}
  dc(e_{2n}, e_1, Y, Z) &= -c(\mu(e_{2n}, e_1), Y, Z) +  c(\mu(e_{2n},Y), e_1, Z) - c( \mu(e_{2n}, Z), e_1, Y) \\
  & = - a \, c (e_1, Y, Z) - c(v, Y , Z) + c(A Y, e_1, Z) - c(A Z, e_1, Y).
\end{align*}
As above we see that the second term in the right-hand-side vanishes. Regarding the other three terms, notice that for $W, X \in \ngo_1$ we have that
\begin{align*}
  c(e_1, W, X) &=   - \la \mu(e_{2n}, J W), X \ra + \la \mu(e_{2n}, J X), W\ra = - \la A J W, X\ra + \la A J X, W\ra   \\
    &   =  - \la A J W, X\ra - \la J A^t W , X \ra  =  -2 \,  \la S(A) J W, X\ra,
\end{align*}
where $S(A) = \unm(A + A^t)$ denotes the symmetric part of an endomorphism. Recall that $J|_{\ngo_1}$ commutes with $A$ and hence also with $A^t$ since $J = -J^t$. After using the alternancy of $dc$ all three remaining terms are of the type $c(e_1, W, X)$, and we immediately obtain
\[
    dc(e_{2n}, e_1, Y, Z) =  \left\la   \big( a S(A) + S(A) A + A^t S(A) \big) J\, Y, Z \right\ra.
\]
The lemma now follows, since $a S(A) + S(A) A + A^t S(A) = S(a A + A^2 + A^t A)$.
\end{proof}

This suggests the following

\begin{definition}
An almost-abelian Lie bracket $\mu = \mu(a,v,A)$ satisfying $aA+A^2+A^tA \in \sog(\ngo_1)$ is called an \emph{SKT bracket}.
\end{definition}

\begin{corollary}\label{cor_trace}
Any SKT bracket $\mu = \mu(a,v,A)$ satisfies $a \tr(A) \leq 0$, with equality if and only if $A\in \sog(\ngo_1)$.
\end{corollary}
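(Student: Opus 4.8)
The plan is to extract a scalar identity from the skew-symmetry condition \eqref{eqn_SKTcondition} by taking traces, exploiting that a skew-symmetric endomorphism of $\ngo_1$ has vanishing trace. Writing $M := aA + A^2 + A^tA$, the SKT hypothesis $M \in \sog(\ngo_1)$ gives in particular $\tr M = 0$, that is,
\[
  a \, \tr(A) + \tr(A^2 + A^tA) = 0.
\]
Everything will then reduce to understanding the second trace.

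The key algebraic observation is that $\tr(A^2 + A^tA)$ is controlled entirely by the symmetric part $S(A) = \unm(A + A^t)$ of $A$. Concretely, I would factor $A^2 + A^tA = (A + A^t)A = 2\,S(A)\,A$ and then use that the symmetric endomorphism $S(A)$ is orthogonal, with respect to the trace pairing, to the skew-symmetric part $A - S(A)$; this yields $\tr\big(S(A)\,A\big) = \tr\big(S(A)^2\big) = \Vert S(A) \Vert^2$. Substituting back into the displayed identity gives the clean relation
\[
  a \, \tr(A) = -2 \, \Vert S(A) \Vert^2.
\]

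The corollary then follows at once: the right-hand side is non-positive, so $a\,\tr(A) \le 0$, and equality holds precisely when $\Vert S(A)\Vert = 0$, i.e.~when $A$ is skew-symmetric, which is exactly the condition $A \in \sog(\ngo_1)$. I do not expect a genuine obstacle here; the only point deserving (minor) care is the orthogonality of symmetric and skew-symmetric endomorphisms under the trace form, namely the standard fact that $\tr(SK) = 0$ whenever $S = S^t$ and $K = -K^t$, which is what makes the skew part of $A$ drop out of $\tr\big(S(A)\,A\big)$.
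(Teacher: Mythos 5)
Your proof is correct and follows essentially the same route as the paper: take the trace of the SKT condition \eqref{eqn_SKTcondition} to get $a\tr(A) + 2\tr(S(A)^2) = 0$, then conclude from $\tr(S(A)^2) = \Vert S(A)\Vert^2 \geq 0$ with equality exactly when $S(A)=0$. Your factorization $A^2 + A^tA = 2\,S(A)\,A$ together with the orthogonality of symmetric and skew-symmetric parts is just a slightly more explicit justification of the trace identity the paper states directly.
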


\begin{proof}
Taking traces in \eqref{eqn_SKTcondition} yields $a \tr(A) + 2 \tr(S(A)^2)=0$, and $\tr (S(A)^2) \geq 0$ with equality if and only if $S(A) = 0$.
\end{proof}

\begin{remark}\label{rem_Aneq0}
If $\mu$ is almost-abelian and nilpotent, then $\ad e_{2n}$ is nilpotent, thus $a=0$ and $A$ is nilpotent. By the equality condition in Corollary \ref{cor_trace} we must have $A=0$, and then $\mu$ is in fact two-step nilpotent.
\end{remark}

\begin{lemma}\label{lem_realpart}
If $\mu = \mu(a, v, A)$ is an SKT bracket, the real part of the eigenvalues of $A$ is either $0$ or $-\tfrac{a}{2}$.
\end{lemma}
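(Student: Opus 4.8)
The plan is to diagonalize the SKT constraint using the complex structure. By Lemma \ref{lem_SKT} we know that $B := aA + A^2 + A^tA$ lies in $\sog(\ngo_1)$, i.e.\ $B^t = -B$. Since $A$ commutes with $J_1$, I would first regard $\ngo_1$ as a complex vector space by letting $J_1$ act as multiplication by $\im$, and endow it with the Hermitian inner product $h(x,y) := \la x,y\ra - \im \la J_1 x, y\ra$. A short check shows $h$ is conjugate-symmetric and $\CC$-linear in the first slot, and --- crucially --- that for any real endomorphism commuting with $J_1$ the real transpose coincides with the Hermitian adjoint, so $A^t = A^*$. Consequently the SKT condition becomes the statement that $B = aA + A^2 + A^*A$ is \emph{skew-Hermitian}, $B^* = -B$; in particular $h(Bx,x) \in \im\RR$ for every $x$.

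Next I would evaluate this on eigenvectors. Let $x$ be a unit $h$-eigenvector of the $\CC$-linear operator $A$, say $Ax = \lambda x$ with $\lambda = \alpha + \im\beta$. Using $h(A^*x,x) = \overline{h(x,A^*x)} = \overline{h(Ax,x)} = \bar\lambda$ one computes
\[
  h(Bx,x) = a\,h(Ax,x) + h(A^2x,x) + h(A^*Ax,x) = a\lambda + \lambda^2 + |\lambda|^2 .
\]
Since this must be purely imaginary, its real part vanishes, giving $a\alpha + (\alpha^2 - \beta^2) + (\alpha^2+\beta^2) = \alpha(a + 2\alpha) = 0$. Hence $\alpha = 0$ or $\alpha = -\tfrac a2$, which is exactly the claimed dichotomy for the real part $\alpha = \Re\lambda$.

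Finally I would translate back to the eigenvalues of $A$ as a real matrix. Complexifying $\ngo_1$ and splitting into the $\pm\im$-eigenspaces of $J_1$, the eigenvalues of the real operator $A$ are precisely the $\CC$-linear eigenvalues $\lambda_j$ together with their conjugates $\bar\lambda_j$; all of these share the real parts $\{\Re\lambda_j\}$, so the conclusion of the previous step covers every eigenvalue of $A$. The computation itself is a one-liner, so the only real care needed --- and the step I would treat as the main (if minor) obstacle --- is the bookkeeping around the complex structure: verifying the identity $A^t = A^*$ for the chosen $h$, and making sure the eigenvalue argument over $(\ngo_1, J_1)$ accounts for all eigenvalues of the underlying real matrix. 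A purely real alternative, restricting $B$ to the two-dimensional real $A$-invariant subspace attached to a conjugate pair $\alpha \pm \im\beta$, would avoid complexification but lead to the same quadratic $\alpha(a+2\alpha)=0$.
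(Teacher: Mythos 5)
Your proof is correct, and it takes a genuinely different (and in one respect cleaner) route than the paper's. The paper complexifies $\ngo_1$ to $\ngo_1^\CC$ and exploits the \emph{complex bilinear} extension of $\ip$: skew-symmetry of $B = aA + A^2 + A^tA$ is encoded as $\laH Bz, \bar z \raH = 0$ for all $z$, and evaluating on an eigenvector $Az = \lambda z$ gives $\lambda(a+2\lambda)\,\laH z,\bar z\raH = 0$. Because this bilinear pairing can vanish on eigenvectors, the paper must then treat the isotropic case separately: when $\lambda \neq 0, -\tfrac a2$ one has $\Vert u \Vert = \Vert v \Vert$, $\la u,v\ra =0$ for $z = u + \sqrt{-1}\,v$, and a second, purely real computation $\la Bu,u\ra = 0$ again produces $\alpha(a+2\alpha)=0$. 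You avoid this two-case structure entirely by working with the \emph{positive-definite} Hermitian form $h$ on $(\ngo_1, J_1)$ rather than the bilinear form on $\ngo_1^\CC$: since $h(x,x)=1$ on unit eigenvectors, the single identity $\Re\,h(Bx,x) = \alpha(a+2\alpha) = 0$ suffices. The trade-offs are real but minor: your argument uses $[A,J_1]=0$ (legitimate here, as it is built into Notation \ref{not_skt} via Lemma \ref{lem_integrmuA}) both to make $A$ and $A^t$ $\CC$-linear and to identify $A^t = A^*$, whereas the paper's proof never invokes the complex structure and thus proves the slightly more general linear-algebra statement that \emph{any} real matrix $A$ with $aA + A^2 + A^tA \in \sog(\ngo_1)$ has eigenvalues of real part $0$ or $-\tfrac a2$; in exchange, you need the (standard, and correctly handled) bookkeeping that the eigenvalues of $A$ as a real matrix are the $\CC$-linear eigenvalues on $(\ngo_1,J_1)$ together with their conjugates, so the dichotomy for real parts covers the full real spectrum.
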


\begin{proof}
Let $\ngo_1 ^\CC = \ngo_1  \otimes_\RR \CC = \ngo_1  \oplus \sqrt{-1}  \, \ngo_1 $ be the complexified vector space, and for $B\in \glg(\ngo_1 )$ denote also by $B\in \glg(\ngo_1 ^\CC)$ the corresponding $\CC$-linear endomorphism. The fixed inner product $\ip$ on $\ngo_1 $ induces a Hermitian inner product $\ipH$ on $\ngo_1 ^\CC$, and $B\in \sog(\ngo_1)$ if and only if
\[
    \laH B z, \bar z \raH = 0, \qquad \forall z\in \ngo_1^\CC.
\]
Let $\lambda \in \CC$ be an eigenvalue of $A$ with corresponding eigenvector $z = u + \sqrt{-1} v \in \ngo_1^\CC$, $u,v \in \ngo_1$. Applying the above for $B = a A + A^2 + A^t A$, and Lemma \ref{lem_SKT}, we have that
\[
     0 = \laH \big( a A + A^2 + A^t A \big) z, \bar z  \raH = \lambda (a + 2 \lambda) \cdot \laH z, \bar z\raH.
\]
If $\lambda \neq 0, -\tfrac{a}{2}$ then $\laH z, \bar z\raH = 0$. This is equivalent to saying that $\Vert u \Vert = \Vert v \Vert$ (say equal to $1$) and $\la u,v\ra = 0$. In this case, setting $\lambda = \alpha + \sqrt{-1} \beta$, and using that $A u = \alpha u - \beta v$, $Av = \beta u + \alpha v$, and the SKT condition, one gets
\[
   0 = \big\la \big( a A + A^2 + A^t A\big) u, u \big\ra = a \, \alpha + (\alpha^2 - \beta^2) + (\alpha^2 + \beta^2) = \alpha  (a + 2 \alpha),
\]
from which the lemma follows.
\end{proof}

We are now in a position to completely characterize the SKT condition:
 % for almost-abelian Lie brackets:

\begin{theorem}\label{thm_SKT}
An almost-abelian Lie bracket $\mu = \mu(a,v,A)$ is SKT if and only if $[A,A^t] = [A,J_1] = 0$ and each eigenvalue of $A$ has real part equal to $0$ or $-\tfrac{a}{2}$.
%  $\ngo_1 = \ngo_1' \oplus \ngo_1''$, where $\ngo_1'$ and $\ngo_1''$ are orthogonal subspaces invariant under $A$ and $J$, of dimensions $2$ and $2n-4$, respectively, such that
% \[
%   A = \left(
%     \begin{array}{ccc}
%       -a/2 & -b  & 0 \\
%       b   & -a/2 & 0 \\
%       0 & 0 & B
%     \end{array}
%   \right), \qquad B = -B^t,
% \]
% with $A$ commuting with $J|_{\ngo_1}$.
% where $B = -B^t$ and it commutes with $J|_{\ngo_1''}$.
% Let $(G_{\mu},J,\omega)$ be the Hermitian unimodular Lie group with $\mu = \mu_{A,a,v}.$ If the metric $g = \omega (\cdot , J \cdot)$ is a SKT-metric then $A$ is a normal matrix.
\end{theorem}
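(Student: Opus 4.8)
The plan is to reduce everything to the single algebraic condition furnished by Lemma~\ref{lem_SKT}: the bracket $\mu=\mu(a,v,A)$ is SKT if and only if $B:=aA+A^2+A^tA$ lies in $\sog(\ngo_1)$, i.e.\ $B+B^t=0$. Throughout, $J$ is integrable by hypothesis, so $[A,J_1]=0$ holds automatically by Lemma~\ref{lem_integrmuA}; this is why that condition appears in the statement even though it is built into the setup. Moreover, the assertion that the eigenvalues of $A$ have real part $0$ or $-a/2$ is, in the ``only if'' direction, exactly Lemma~\ref{lem_realpart}. Hence the genuinely new content is the normality $[A,A^t]=0$, together with the converse implication. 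I would write $A=S+K$ with $S=\unm(A+A^t)$ symmetric and $K=\unm(A-A^t)$ skew, and recall that $A$ is normal precisely when $[S,K]=0$.

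For the ``if'' direction I would assume $A$ normal with eigenvalues of real part in $\{0,-a/2\}$. Using $[S,K]=0$ one computes $A^2=S^2+2SK+K^2$ and $A^tA=S^2-K^2$, whence the pleasant identity
\[
  B=aA+A^2+A^tA=(a\Id+2S)\,A.
\]
Since $a\Id+2S$, $S$ and $K$ all commute, one also gets $B^t=(a\Id+2S)A^t$, and therefore $B+B^t=(a\Id+2S)(A+A^t)=2S(a\Id+2S)$. Now a normal matrix is orthogonally similar to a block-diagonal matrix with $1\times1$ blocks (real eigenvalues) and $2\times2$ blocks $\minimatrix{\alpha}{-\beta}{\beta}{\alpha}$, from which the eigenvalues of $S$ are exactly the real parts of the eigenvalues of $A$, hence all lie in $\{0,-a/2\}$. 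Consequently each eigenvalue $s$ of the symmetric matrix $S$ satisfies $s(a+2s)=0$, so $S(a\Id+2S)=0$ and $B+B^t=0$: the bracket is SKT by Lemma~\ref{lem_SKT}.

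For the ``only if'' direction I would start from $B+B^t=0$ and Lemma~\ref{lem_realpart}, which gives $\Re\lambda\,(a+2\Re\lambda)=0$ for every eigenvalue $\lambda$ of $A$, i.e.\ $(\Re\lambda)^2=-\tfrac a2\Re\lambda$. Summing over the complexified spectrum and using $\sum\Re\lambda=\tr A$ yields $\sum_\lambda(\Re\lambda)^2=-\tfrac a2\tr A$. On the other hand, taking the trace of the SKT condition (exactly as in Corollary~\ref{cor_trace}) gives $a\tr A+2\tr(S^2)=0$, that is $\tr(S^2)=-\tfrac a2\tr A$. Comparing the two identities produces the equality
\[
  \tr\big(S(A)^2\big)=\sum_{\lambda}(\Re\lambda)^2.
\]
The crux is then to invoke the linear-algebra estimate (Appendix~\ref{app}) stating that $\tr(S(A)^2)\ge\sum_\lambda(\Re\lambda)^2$ always holds, with equality \emph{if and only if} $A$ is normal; since equality holds here, it forces $[A,A^t]=0$, completing the proof.

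The main obstacle is precisely the equality (rigidity) case of this estimate. I would establish the inequality and its rigidity via a complex Schur decomposition $A=U(D+N)U^*$ with $U$ unitary, $D=\diag(\lambda_i)$ and $N$ strictly upper triangular: the Hermitian part $\unm(A+A^*)$ is then unitarily conjugate to $\diag(\Re\lambda_i)+\unm(N+N^*)$, whose squared Frobenius norm equals $\sum_i(\Re\lambda_i)^2+\unm\Vert N\Vert^2$. As $\tr(S(A)^2)$ is exactly this squared norm, the inequality follows at once, and equality holds iff $N=0$, i.e.\ iff $A$ is normal. Everything else in the argument is elementary, so this rigidity statement is the only delicate ingredient.
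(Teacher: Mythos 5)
Your proposal is correct, and its skeleton coincides with the paper's: both reduce everything to Lemma \ref{lem_SKT}, and in the ``only if'' direction both combine Lemma \ref{lem_realpart} with the trace identity of Corollary \ref{cor_trace} to force equality in the estimate $\Vert S(A)\Vert^2\geq\sum_i\Re(\lambda_i)^2$, whose rigidity then yields normality. Where you genuinely diverge is in how that rigidity is established. The paper proves it in Appendix \ref{app} (Lemma \ref{lem_appendix} and Corollary \ref{cor_app}) by running the negative gradient flow of $E\mapsto\Vert[E,E^t]\Vert^2$ along the conjugation orbit of $E$, invoking {\L}ojasiewicz's theorem to obtain convergence to a normal limit, and comparing the monotone quantity $\Vert E\Vert^2$ with the conserved eigenvalues. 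You instead use the complex Schur decomposition $A=U(D+N)U^*$, which gives the exact identity $\Vert S(A)\Vert^2=\sum_i\Re(\lambda_i)^2+\unm\Vert N\Vert^2$, so the inequality and its equality case ($N=0$, i.e.\ $A$ normal, using that a triangular normal matrix is diagonal) are immediate. Your route is more elementary and self-contained: it replaces a dynamical, real-analytic argument by classical linear algebra (this is Schur's inequality together with its equality case) and even quantifies the defect as $\unm\Vert N\Vert^2$; the paper's flow argument, for its part, echoes the GIT/moment-map machinery used elsewhere in the article. A second, smaller difference: for the ``if'' direction the paper only asserts that it ``follows by direct computation,'' whereas you make it explicit via the factorization $aA+A^2+A^tA=(a\Id+2S)A$ (valid because $[S,K]=0$ for normal $A$) and the observation that $\mathrm{Spec}(S)\subset\{0,-\tfrac{a}{2}\}$ forces $2S(a\Id+2S)=0$; this verification is correct and cleanly organized.
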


\begin{proof}
Integrability of $J$ is equivalent to $J(\ngo_1) \subset \ngo_1$ and $[A, J_1] = 0$ thanks to Lemma \ref{lem_integrmuA}.

Assume first that $\mu$ is SKT. Since $A$ commutes with $J_1$, by Lemma \ref{lem_realpart} its eigenvalues $\lambda_1, \ldots, \lambda_{2n-2}$ come in pairs and can be rearranged so that
\[
  \Re(\lambda_1) = \cdots = \Re(\lambda_{2k}) = -\tfrac{a}{2}, \qquad \Re(\lambda_{2k+1}) = \cdots = \Re(\lambda_{2n-2}) = 0.
\]
On the other hand, taking traces in \eqref{eqn_SKTcondition} one obtains $\Vert S(A)\Vert^2 = \unm k \, a^2$.
% On the other hand, it is also clear that $\tr (A^\RR)^2 = 2\, k \, (\tfrac{a}{2})^2 = \unm k \, a^2$, and
This yields equality in Corollary \ref{cor_app},
% \[
%   \Vert S(A) \Vert^2 \leq  \sum_{i=1}^{2n-2} \Re(\lambda_i)^2,
%    % \tr \left(A^{\RR}\right)^2.
% \]
thus $A$ is a normal endomorphism.

The converse assertion follows by direct computation using Lemma \ref{lem_SKT}.
\end{proof}

We conclude this section with a remark about generalized K\"ahler structures. Recall that a generalized K\"ahler manifold is a Riemannian manifold $(M^{2n},g)$ together with two $g$-compatible complex structures $J_+, J_-$ satisfying
\begin{equation}\label{eqn_genK}
    d^c_+ \omega_+ =  - d^c_- \omega_- =: H; \qquad d H = 0,
\end{equation}
see e.g.~\cite{ST12}, \cite{Gua14}. Here, $d^c_\pm = \sqrt{-1} \big(\overline\partial_\pm - \partial_\pm \big) = (-1)^r J_\pm d_\pm J_\pm$ on $r$-forms, with $(J \alpha)(\cdot, \ldots, \cdot) = (-1)^r \alpha(J\cdot, \ldots, J \cdot)$ for an $r$-form $\alpha$.  The $3$-form $H$ is called the torsion. Since $d d^c  = 2 \sqrt{-1} \partial \overline\partial$, a generalized K\"ahler manifold $(M,g,J_\pm)$ may be thought of as a pair of pluriclosed structures whose corresponding Riemannian metrics coincide, and which are compatible with each other in the sense that they satisfy the first equation in \eqref{eqn_genK}.

A natural question that arises in our context is  to determine which almost-abelian SKT brackets are compatible with a generalized K\"ahler structure. To that end, consider on $\ggo = \RR e_1 \oplus \ngo_1 \oplus \RR e_{2n}$ the following pair complex structures:
\[
  J_+=  \threematrix{0}{0}{-1}{0}{J_1}{0}{1}{0}{0}, \quad J_- = \threematrix{0}{0}{-1}{0}{-J_1}{0}{1}{0}{0},
\]
for some fixed $J_1$ on $\ngo_1$ with $J_1^2 = - \Id_{\ngo_1}$, compatible with $\ip$. For each almost-abelian Lie bracket $\mu = \mu(a,v,A)$ on $\ggo$ denote with the same names the left-invariant almost-complex structures defined by $J_+$, $J_-$ on the simply-connected Lie group $\G_\mu$ with Lie algebra $(\ggo,\mu)$. Notice that by Lemma \ref{lem_integrmuA}, $J_+$ is integrable if and only if $J_-$ is so, and in what follows we assume both of them to be integrable, i.e. $[A,J_1] = 0$.

\begin{proposition}\label{prop_genk}
An almost-abelian Lie bracket  $\mu=\mu(a,v,A)$ is compatible with a generalized K\"ahler structure if and only if $a A + A^2+ A^t A \in \sog(\ngo_1)$ and $v=0.$
\end{proposition}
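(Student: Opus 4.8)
The plan is to rephrase the generalized K\"ahler conditions \eqref{eqn_genK} purely in terms of the Bismut torsion $3$-forms $c_+$ and $c_-$ of the two Hermitian structures $(g, J_+)$ and $(g, J_-)$, where $g = \ip$ is the fixed inner product and both $J_\pm$ are integrable by the standing assumption $[A, J_1] = 0$. Since $c = -d^c\omega$ (see Section~\ref{sec_prelim}), we have $d^c_+\omega_+ = -c_+$ and $d^c_-\omega_- = -c_-$, so the first equation in \eqref{eqn_genK} reads $c_+ = -c_-$, while the closedness condition $dH = 0$ becomes $dc_+ = 0$ (equivalently $dc_- = 0$). Thus $\mu$ is compatible with a generalized K\"ahler structure if and only if $c_+ = -c_-$ and $dc_+ = 0$.

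Next I would compute $c_\pm$ on triples of basis vectors using formula \eqref{eqn_formulac}, exploiting that $J_+$ and $J_-$ agree on the invariant plane $\Pi := \vspan\{e_1, e_{2n}\}$ and differ only by a sign on $\ngo_1$. Because $\ngo = \RR e_1 \oplus \ngo_1$ is an abelian ideal and $J_\pm e_1 = e_{2n}$ is the only way these operators produce the direction $e_{2n}$, a short case analysis shows that the only possibly nonzero components are $c_\pm(e_1, w, w')$ and $c_\pm(e_1, e_{2n}, w)$ with $w, w' \in \ngo_1$; triples lying entirely in $\ngo_1$, triples of the form $(e_{2n}, w, w')$, and triples lying in $\Pi$ all vanish.

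The key observation is how these two surviving components behave under the sign change $J_1 \mapsto -J_1$. The computation of $c_+(e_1, w, w')$ is exactly the one carried out in the proof of Lemma~\ref{lem_SKT}, giving $c_+(e_1, w, w') = -2\,\la \unm(A + A^t) J_1 w, w'\ra$; since this depends linearly on $J_1$, replacing $J_1$ by $-J_1$ yields $c_-(e_1, w, w') = -c_+(e_1, w, w')$, so the equation $c_+ = -c_-$ holds automatically on these triples. On the other hand, a direct evaluation gives $c_\pm(e_1, e_{2n}, w) = \la v, w\ra$ for \emph{both} complex structures, because this component only sees the bracket $\mu(e_{2n}, e_1) = a e_1 + v$ and the plane $\Pi$, on which $J_+$ and $J_-$ coincide. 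Hence on these triples $c_+ = -c_-$ forces $\la v, w\ra = -\la v, w\ra$ for all $w \in \ngo_1$, i.e.\ $v = 0$; conversely $v = 0$ makes all surviving components satisfy $c_+ = -c_-$. Therefore $c_+ = -c_-$ is equivalent to $v = 0$.

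Finally, the remaining condition $dc_+ = 0$ is precisely the SKT condition for $(g, J_+)$, which by Lemma~\ref{lem_SKT} is equivalent to $aA + A^2 + A^t A \in \sog(\ngo_1)$ (and is independent of $v$). Combining the two equivalences yields the proposition. I expect the main obstacle to be the bookkeeping in the torsion computation, and in particular verifying that no component of $c_+ = -c_-$ beyond the $(e_1, e_{2n}, w)$-type imposes any condition other than $v = 0$; once the parity of the $J_1$-dependence is isolated, the rest reduces to the already-established Lemma~\ref{lem_SKT}.
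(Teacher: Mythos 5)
Your proposal is correct and takes essentially the same route as the paper's proof: both reduce generalized K\"ahler compatibility to the SKT condition (settled by Lemma~\ref{lem_SKT}, which is insensitive to the sign of $J_1$) together with $c_+ + c_- = 0$, and both use formula \eqref{eqn_formulac} to see that only the $(e_1, e_{2n}, Z)$-components impose a condition, namely $v = 0$. Your explicit parity argument in $J_1$ on the $(e_1,w,w')$-triples simply fills in the detail the paper compresses into the remark that $c_+ + c_-$ vanishes whenever two or three entries lie in $\ngo_1$.
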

\begin{proof}
By Lemma \ref{lem_SKT} the SKT condition for both complex structures is equivalent to $a A + A^2+ A^t A \in \sog(\ngo_1).$ Recall that $c_\pm = d_\pm^c \omega_\pm$, thus we need to check whether $c_+ + c_- = 0$. In order to compute $c_+ + c_-,$ we apply formula (\ref{eqn_formulac}). Notice that if either two or three of the entries lie in $\ngo_1,$ then $c_+ + c_-$ vanishes. Thus,  $c_+ + c_- = 0$ is equivalent to $(c_+ + c_-)(e_1, e_{2n},Z)=0,$ for all $Z\ \in \ngo_1$. Using the notation from Lemma \ref{lem_integrmuA} this becomes $\la v, Z \ra =0$ for all $Z\in \ngo_1,$ and the result follows.
 % It follows that, $c_+ + c_- = 0$ if and only if $v=0.$
% {\alert write down real proof.}
\end{proof}

It follows that the solvable Lie group $\Ss_{a,b}$ from Example \ref{ex_t2inoue} admits a left-invariant generalized K\"ahler structure, a fact  first observed in \cite{FT09}.

\subsection{The Bismut-Ricci form of almost-abelian solvmanifolds}

In this section we obtain a formula for the Bismut-Ricci form $\rho^B$ of a left-invariant Hermitian structure on an almost-abelian Lie group $\G$ which is not necessarily unimodular.

Let $(\ggo, \mu)$ be a $2n$-dimensional real Lie algebra endowed with an integrable Hermitian structure $(J,\ip)$. Let $\{ e_i\}_{i=1}^{2n}$ be a $\ip$-orthonormal basis such that $J e_i = e_{2n+1-i}$ for $i=1,\ldots, n$. The fundamental form $\omega = g(J\cdot ,\cdot)$ is given by
\[
  \omega = e^1 \wedge e^{2n} + \cdots + e^n \wedge e^{n+1},
\]
where $\{ e^i\}$ is the dual basis. We identify  $\ggo^*$ with the left-invariant 1-forms on $\G$. For any $\alpha \in \ggo^*$, $d \alpha$ is a left-invariant 2-form, determined by its values on $\Lambda^2 \ggo$. These are given by $ d \alpha(X,Y) = - \alpha (\mu(X,Y))$, for $X, Y\in \ggo$.
 % where $\mu$ denotes the Lie bracket in $\ggo$.

Let $(\cdot)^{\flat} : \ggo \to \ggo^*$ denote the usual isomorphism induced by $\ip$: $X^\flat(\cdot) := \la X, \cdot\ra$, $X\in \ggo$.

\begin{proposition}\label{prop_BisRicaa}
The Bismut-Ricci form $\rho^B_\mu$ of an almost-abelian Lie bracket $\mu = \mu(a,v,A)$ is given by
\[
  \rho^B_\mu =  - (a^2 + \unm a \tr A + \Vert v\Vert^2 ) \, e^1 \wedge e^{2n} - (A^t v)^\flat \wedge e^{2n}.
\]
\end{proposition}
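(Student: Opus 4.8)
The plan is to compute the Bismut-Ricci form $\rho^B_\mu$ directly from its definition, using the fact that in the homogeneous setting all the relevant tensors are determined by algebraic data on $\ggo$. A clean route is to avoid working with the Bismut curvature tensor $R^B$ explicitly and instead use the well-known expression for $\rho^B$ in terms of the Lie-theoretic data. Concretely, for a left-invariant Hermitian structure one has a formula of the shape $\rho^B(X,Y) = -\tr\big(J \, \ad_\mu(\mu(X,Y))\big) + (\text{lower-order torsion terms})$; more precisely I would start from the identity relating $\rho^B$ to the Chern-Ricci form and the torsion, or equivalently compute $\rho^B$ via the formula $\rho^B = d\theta$ where $\theta$ is an appropriate Lie-form built from the trace of $\ad$ and the torsion $c$. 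Either way, everything reduces to traces of compositions of $J$, $\ad e_{2n} = \mu(e_{2n}, \cdot)$, and the metric.

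The key computational input is the structure of $\ad e_{2n}$ given by \eqref{eqn_ade2n}: since $\ngo$ is abelian, the only nonzero brackets are $\mu(e_{2n}, e_1) = a e_1 + v$ and $\mu(e_{2n}, X) = AX$ for $X \in \ngo_1$. First I would record that $\omega = e^1 \wedge e^{2n} + \sum_{i=2}^{n} e^i \wedge e^{2n+1-i}$ and that $J$ pairs $e_1$ with $e_{2n}$ and acts as $J_1$ on $\ngo_1$. Since $\rho^B$ is a closed $(1,1)$-form (the SKT/Bismut formalism guarantees $\rho^B$ lies in the span we expect) and the geometry is concentrated in the $e_1, e_{2n}$ and $A$-directions, I expect $\rho^B_\mu$ to be supported on the two-forms $e^1 \wedge e^{2n}$ and $e^j \wedge e^{2n}$ for $j$ in the $\ngo_1$-block. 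The coefficient of $e^1 \wedge e^{2n}$ should assemble from three sources: the square of the $a$-eigendirection (giving $a^2$), the interaction of the trace $\tr A$ with $a$ through the $J$-twisting in \eqref{eqn_defBismutRic} (giving $\unm a \tr A$), and the length of the off-diagonal vector $v$ (giving $\Vert v \Vert^2$). The term $-(A^t v)^\flat \wedge e^{2n}$ arises from the mixed brackets $\mu(e_{2n}, e_1) = a e_1 + v$ interacting with the action of $A$ on $\ngo_1$, and the appearance of $A^t$ (rather than $A$) is the signature of the skew-symmetric torsion contribution.

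The main obstacle will be bookkeeping the torsion contributions correctly: the Bismut connection differs from Levi-Civita precisely by the torsion $3$-form $c$, and $\rho^B$ therefore differs from the Chern- or Levi-Civita-Ricci form by explicit $c$-dependent terms. Getting the coefficient $\unm a \tr A$ and the precise transpose in $A^t v$ right requires carefully tracking which connection term produces which, using $c$ from \eqref{eqn_formulac} and the integrability relation $[A, J_1] = 0$ (so $J_1$ commutes with both $A$ and $A^t$). I would organize the computation by evaluating $\rho^B_\mu(X, Y)$ on the three cases $(e_1, e_{2n})$, $(e_j, e_{2n})$ with $e_j \in \ngo_1$, and pairs inside $\ngo_1$ (which should vanish because $\ngo_1$ is abelian and $J$-invariant), then reassembling. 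A useful consistency check is to specialize to the $2$-step nilpotent case $a = 0$, $A = 0$, where the formula must reduce to $\rho^B_\mu = -\Vert v \Vert^2 e^1 \wedge e^{2n}$, matching \eqref{eqn_Pmunil}, and to verify that the $(1,1)$-part is automatically obtained since $J^* \rho^B = \rho^B$ for the stated answer.
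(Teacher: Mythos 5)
Your high-level strategy --- expressing $\rho^B_\mu$ as the exterior derivative of a Lie-theoretic $1$-form instead of working with the curvature tensor $R^B$ --- is exactly the route the paper takes, but your proposal stops short of a proof because you never commit to, or derive, that $1$-form. The paper invokes Vezzoni's formula $\rho^B_\mu = d\theta^{-1}_\mu$, where
\[
\theta_\mu^{-1}(X) = -\unm\big(\tr(J\ad_\mu X) + \tr(\ad_\mu JX) + 2\la \omega, dX^{\flat}\ra\big),
\]
and once this is in hand the computation collapses to three short evaluations: $\rho^B_\mu(X,Y) = -\theta^{-1}_\mu(\mu(X,Y))$ vanishes on $\ngo\wedge\ngo$ because $\ngo$ is abelian, and the entries $\rho^B_\mu(e_j,e_{2n})$ and $\rho^B_\mu(e_1,e_{2n})$ follow by substituting $\mu(e_{2n},e_1)=ae_1+v$ and $\mu(e_{2n},e_j)=Ae_j$. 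In particular, the $A^t$ is not "the signature of the skew-symmetric torsion contribution" as you suggest; it appears for the mundane reason that $\rho^B_\mu(e_j,e_{2n}) = \la \mu(e_{2n},e_1),\mu(e_j,e_{2n})\ra = -\la ae_1+v, Ae_j\ra = -\la A^t v, e_j\ra$. Without a precise formula for $\theta$ (or for the relation between $\rho^B$ and the Chern--Ricci form, which you also leave unstated), the "bookkeeping of torsion contributions" you describe cannot actually be carried out, so the core of the argument is missing; what remains is a plausible but unverified guess at where each term comes from.

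There is also a concrete error that would sabotage your verification step: you assert that $\rho^B$ is a $(1,1)$-form and that the stated answer satisfies $J^*\rho^B=\rho^B$. This is false whenever $A^t v\neq 0$. The term $(A^tv)^{\flat}\wedge e^{2n}$ is not $J$-invariant: applying $J$ to it produces $e^1\wedge (J A^t v)^{\flat}$, a different $2$-form. The Bismut--Ricci form is closed but in general not of type $(1,1)$, and this is precisely why the paper's Lemma \ref{lem_Pmuaa} computes $(\rho^B_\mu)^{1,1}$ separately: the projection halves the coefficient of $(A^tv)^{\flat}\wedge e^{2n}$ and adds the companion term $e^1\wedge(J A^t v)^{\flat}$, so $(\rho^B_\mu)^{1,1}\neq\rho^B_\mu$ unless $A^tv=0$. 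Your proposed consistency check would therefore "fail" even for the correct formula, and trusting it could lead you to reject a correct computation.
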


\begin{proof}
By \cite{Vez13}, $\rho^B_{\mu}$ can be locally written as the derivative of the $1$-form $\theta_\mu^{-1}\in \ggo^*$ given by

\[
  \theta_\mu^{-1}(X) = -\unm \big(\tr(J \ad_\mu X ) + \tr (\ad_\mu J X) + 2 \la\omega, dX^{\flat}\ra  \big).
\]
The first two summands above vanish on $\ngo_1 = \vspan\{e_2,\ldots,e_{2n-1}\}$, whereas
\[
    \tr (J \ad_\mu e_1) + \tr \ad_\mu J e_1 = -a + \tr \ad_\mu e_{2n} = \tr A.
\]
Also, a straightforward computation yields
\[
     \la\omega, dX^{\flat}\ra   = \sum_{j=1}^n \mu(e_{2n+1-j}, e_j)^{\flat}(X) = \mu(e_{2n}, e_1)^\flat  (X).
\]
Using that $\rho_\mu^B (X,Y) = d \theta_\mu^{-1}(X,Y) = - \theta^{-1}_\mu\big(\mu(X,Y) \big)$ and the almost-abelian condition we see that $\rho_\mu^B$ vanishes on $\ngo \wedge \ngo$. For $Y = e_{2n}$, $X = e_j$ with $j=2,\ldots,2n-1$ we have that $\mu(X,Y)\in \ngo$, hence
\[
    \rho_\mu^B(e_j, e_{2n}) = \la \mu(e_{2n}, e_1), \mu(e_j, e_{2n})\ra = -\la ae_1 + v, A e_j\ra = -\la A^t v, e_j\ra.
\]
On the other hand, for $Y=e_{2n}$, $X = e_1$ we compute directly to obtain
\[
    \rho_\mu^B(e_1, e_{2n}) = -\theta_\mu^{-1}(-ae_1 - v) = - \unm a  \tr A - \la \mu(e_{2n},e_1), ae_1 + v\ra = -(a^2 + \unm a \tr A + \Vert v \Vert^2).
\]
\end{proof}

\subsection{The pluriclosed flow on almost-abelian solvmanifolds}

In this section we study the pluriclosed flow of SKT metrics on almost-abelian Lie groups by using the bracket flow approach introduced in Section \ref{section_BF}.

Let $(\G, J, g_0)$ be an almost-abelian, simply-connected Lie group with a left-invariant SKT structure, denote by $J$, $\ip$ the corresponding tensors on $\ggo$, and let $\mu_0$ be the Lie bracket of $\ggo$. Let $g(t)$ be the solution to the pluriclosed flow equation \eqref{eqn_pluriflow} with initial condition $g_0$.  According to \cite[Thm.~5.1]{Lau15}, $g(t)$ coincides up to pull-back by biholomorphisms with the family of Hermitian manifolds determined by the brackets $\mu(t)$ solving
\begin{equation}\label{eqn_BFaa}
    \ddt \mu = - \pi(P_\mu) \mu, \qquad \mu(0) = \mu_0,
\end{equation}
where $P_\mu \in \End(\ggo)$ is the endomorphism associated to $\unm (\rho_\mu^B)^{1,1}$ via $\omega$, see \eqref{eqn_defPmu}.

\begin{lemma}\label{lem_Pmuaa}
For an almost-abelian SKT bracket $\mu = \mu(a,v,A)$, $P_\mu$ is given by
\[
  P_{\mu} =  \threematrix{c}{w^t}{0}{w}{0}{J_1 w}{0}{-w^t J_1}{c}, \qquad c = \big(\tfrac{k}{4} - \unm\big) a^2 - \unm \Vert v\Vert^2, \qquad w = -\unc A^t v \in \ngo_1.
\]
where the blocks are according to \eqref{eqn_decggo}. Here $2  k$ is the multiplicity of $-a/2$ as an eigenvalue of $S(A) = \unm(A + A^t)$, see Theorem \ref{thm_SKT}.
\end{lemma}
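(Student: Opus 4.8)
The goal is to compute $P_\mu$, the endomorphism representing $\tfrac12 (\rho^B_\mu)^{1,1}$ via $\omega(P_\mu \cdot, \cdot) = \tfrac12 (\rho^B_\mu)^{1,1}(\cdot,\cdot)$, starting from the explicit formula for $\rho^B_\mu$ obtained in Proposition \ref{prop_BisRicaa}. The strategy is to first compute the $(1,1)$-part of $\rho^B_\mu$, then invert the relation defining $P_\mu$, and finally simplify the $e^1 \wedge e^{2n}$ coefficient using the SKT eigenvalue structure recorded in Theorem \ref{thm_SKT}.

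\textbf{Step 1: extract the $(1,1)$-part.}
By Proposition \ref{prop_BisRicaa} we have $\rho^B_\mu = -(a^2 + \tfrac12 a \tr A + \Vert v \Vert^2)\, e^1 \wedge e^{2n} - (A^t v)^\flat \wedge e^{2n}$. The plan is to apply the projector $(\rho^B)^{1,1}(\cdot,\cdot) = \tfrac12(\rho^B(\cdot,\cdot) + \rho^B(J\cdot, J\cdot))$. Since $J e_1 = e_{2n}$ and $J e_{2n} = -e_1$, the form $e^1 \wedge e^{2n}$ is already of type $(1,1)$ (it is $J$-invariant), so it survives unchanged. The term $(A^t v)^\flat \wedge e^{2n}$, with $A^t v \in \ngo_1$, is not $J$-invariant, so I would split it: writing $w := -\tfrac14 A^t v$ the plan is to show that its $(1,1)$-part produces both a $\ngo_1 \wedge e^{2n}$ piece and, via $J$ acting on $\ngo_1$, a $\ngo_1 \wedge e^1$ piece. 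Concretely, using $[A,J_1]=0$ (hence $[A^t, J_1]=0$) one sees that the $(1,1)$-part of $\alpha^\flat \wedge e^{2n}$ for $\alpha \in \ngo_1$ is $\tfrac12(\alpha^\flat \wedge e^{2n} - (J_1\alpha)^\flat \wedge e^1)$.

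\textbf{Step 2: invert $\omega(P_\mu \cdot, \cdot) = \tfrac12(\rho^B_\mu)^{1,1}$.}
With the $(1,1)$-form in hand, I would read off $P_\mu$ block by block against the decomposition \eqref{eqn_decggo}, using that $\omega(X,Y) = \la JX, Y\ra$ and the orthonormality of the basis. The coefficient of $e^1 \wedge e^{2n}$ feeds into the diagonal entries $P_\mu(e_1,\cdot)$ and $P_\mu(e_{2n},\cdot)$, giving the scalar $c$; the mixed $\ngo_1$-terms produce the off-diagonal columns $w$, $J_1 w$, $-w^t$, $-w^t J_1$, whose skew-adjointness pattern is exactly the claimed matrix. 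The consistency check that $P_\mu$ is symmetric (which it must be, since $(\rho^B)^{1,1}$ is a real $(1,1)$-form and $P_\mu$ is self-adjoint with respect to $\ip$) is what forces the precise placement of $J_1$ and the signs.

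\textbf{Step 3: rewrite the diagonal coefficient.}
The preliminary value of $c$ coming directly from $\tfrac12$ times the $e^1 \wedge e^{2n}$ coefficient is $-\tfrac12(a^2 + \tfrac12 a \tr A + \Vert v\Vert^2)$, so it remains to show $-\tfrac12 \cdot \tfrac12 a \tr A = \tfrac{k}{4} a^2 - \tfrac12 a^2 + \tfrac12 a^2$, i.e.\ to replace $a \tr A$ by the right multiple of $a^2$. Here I would invoke the SKT structure: by Theorem \ref{thm_SKT} the eigenvalues of $A$ have real part $0$ or $-a/2$, and $2k$ of them equal $-a/2$, so $\tr A = \Re(\tr A) = 2k \cdot(-\tfrac a2) = -k a$. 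Substituting $\tr A = -ka$ turns $-\tfrac14 a \tr A = \tfrac{k}{4}a^2$ and yields $c = (\tfrac k4 - \tfrac12)a^2 - \tfrac12 \Vert v\Vert^2$, as claimed. The main obstacle is the bookkeeping in Step 1–2: correctly tracking how the single term $(A^tv)^\flat \wedge e^{2n}$ splits under the $(1,1)$-projection and then transposes into the four off-diagonal blocks of a symmetric $P_\mu$, making sure the $J_1$'s and signs land consistently.
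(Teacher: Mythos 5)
Your proposal is correct and follows essentially the same route as the paper's proof: project $\rho^B_\mu$ from Proposition \ref{prop_BisRicaa} onto its $(1,1)$-part via $(\alpha\wedge\beta)^{1,1} = \unm(\alpha\wedge\beta) + \unm (J\alpha)\wedge(J\beta)$, use the eigenvalue structure of Theorem \ref{thm_SKT} to replace $\tr A$ by $-ka$, and read off $P_\mu$ block by block from $\omega(P_\mu\cdot,\cdot) = \unm(\rho^B_\mu)^{1,1}$ (the only inessential differences being that the paper substitutes $\tr A = -ka$ before projecting rather than after, and that your appeal to $[A,J_1]=0$ in Step 1 is not actually needed for the projection formula).
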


\begin{proof}
Notice that for any $\alpha, \beta \in \ggo^*$ we have
\[
  ( \alpha \wedge \beta) (J \cdot, J \cdot) = (J \alpha) \wedge (J \beta) (\cdot ,\cdot),
\]
where $J$ acts on $\ggo^*$ by $J \alpha = \alpha\circ J^{-1} =  - \alpha \circ J$. Thus,
\[
    (\alpha\wedge \beta)^{1,1} = \unm  (\alpha\wedge \beta)  + \unm (J \alpha) \wedge (J\beta).
    % =  \unm \left( \alpha \wedge \beta \right)
\]
Since $\tr A = - k a$, from Proposition \ref{prop_BisRicaa} we have that
\begin{align*}
    \unm \, \big(\rho_\mu^B \big)^{1,1} &=  c \cdot \left( e^1 \wedge e^{2n}  \right)^{1,1} + \left( (2 w)^\flat \wedge e^{2n} \right)^{1,1} \\
        &=  \tfrac{c}{2} \cdot e^1 \wedge e^{2n} - \tfrac{c}{2} \cdot e^{2n} \wedge e^1 + w^\flat \wedge e^{2n} - (J w)^\flat \wedge e^1 \\
        &= c \cdot e^1 \wedge e^{2n}  + w^\flat \wedge e^{2n} + e^1 \wedge (Jw)^\flat.
\end{align*}
with $c, w$ as in the statement, and we have used that $J e^1 = e^{2n}$ and $J(X^\flat) = (J X)^\flat$ for $X\in \ggo$. By a routine computation one checks that the last expression coincides with $\omega(P_\mu \cdot, \cdot)$.
\end{proof}

\begin{remark}\label{rem_staticaa}
Notice that from Lemma \ref{lem_Pmuaa} it immediately follows that for dimension $\geq4$ there are no static almost-abelian SKT brackets with $\alpha \neq 0$. Regarding pluriclosed algebraic solitons, since a derivation $D$ of a non-nilpotent almost-abelian Lie algebra $\mu(a,v,A)$ maps $\ggo$ into the nilradical $\ngo$, it follows from \eqref{eqn_defsoliton} and Lemma \ref{lem_Pmuaa} that the cosmological constant $\alpha$ of the soliton equals $c = (\tfrac{k}{4} - \unm)a^2 - \unm \Vert v\Vert^2$.
\end{remark}

Let $\mu(t)$ be the solution to \eqref{eqn_BFaa}. Since $P_\mu$ does not preserve the nilradical $\ngo$ of $\mu_0$, the nilradical of $\mu(t)$ will not be $\ngo$ for $t > 0$. Thus the set of brackets of the form $\mu = \mu(a,v,A)$ (which are defined in terms of the fixed decomposition \eqref{eqn_decggo}) is \emph{not} invariant under the bracket flow ODE~ \eqref{eqn_BFaa}. To overcome this issue we need to find the right \emph{gauge}. For each $\mu = \mu(a,v,A)$  consider
\[
U_\mu :=  \threematrix{0}{w^t}{0}{-w}{\tfrac{a}{4}(A-A^t)}{-J_1 w}{0}{- w^t J_1}{0}, \qquad w = -\unc A^t v.
\]

\begin{lemma}
We have that $U_\mu \in \ug(\ggo, J)$.
\end{lemma}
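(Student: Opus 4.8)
The plan is to verify the two defining conditions of $\ug(\ggo, J)$ separately: skew-symmetry $U_\mu = -U_\mu^t$, and commutation $[U_\mu, J] = 0$. Both are purely computational and amount to block-matrix bookkeeping with respect to the fixed decomposition $\ggo = \RR e_1 \oplus \ngo_1 \oplus \RR e_{2n}$, the two structural inputs being $J_1^t = -J_1$ (since $J$ is $\ip$-compatible, so $J = -J^t$) and $[A, J_1] = 0$ (integrability, Lemma \ref{lem_integrmuA}).

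First I would compute $U_\mu^t$ block by block. Transposition swaps the off-diagonal blocks and transposes each one, so the only points to watch are: the $(1,2)$ and $(2,1)$ blocks $w^t$ and $-w$ exchange with the appropriate sign; the central block $\tfrac{a}{4}(A - A^t)$ transposes to $-\tfrac{a}{4}(A - A^t)$; and the blocks $-J_1 w$ and $-w^t J_1$ transpose to $J_1 w$ and $w^t J_1$, using $J_1^t = -J_1$. Collecting these, every block of $U_\mu^t$ is exactly the negative of the corresponding block of $U_\mu$, giving $U_\mu^t = -U_\mu$.

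For the commutation I would write $J$ in the same block form, namely $J = \threematrix{0}{0}{-1}{0}{J_1}{0}{1}{0}{0}$ (reflecting $J e_1 = e_{2n}$, $J e_{2n} = -e_1$, $J|_{\ngo_1} = J_1$), and then compute $U_\mu J$ and $J U_\mu$ directly. Using $J_1^2 = -\Id_{\ngo_1}$ to simplify the corner interactions between $e_1$, $e_{2n}$ and $\ngo_1$, all blocks of the two products agree automatically except the central $(2,2)$ block, where $U_\mu J$ produces $\tfrac{a}{4}(A - A^t) J_1$ while $J U_\mu$ produces $J_1 \tfrac{a}{4}(A - A^t)$.

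The only genuine input — and thus the ``hard part,'' though it is elementary — is that these two central blocks coincide. This follows because $[A, J_1] = 0$ forces $[A^t, J_1] = 0$ as well: transposing $A J_1 = J_1 A$ and using $J_1^t = -J_1$ yields $J_1 A^t = A^t J_1$. Hence $\tfrac{a}{4}(A - A^t)$ commutes with $J_1$, the $(2,2)$ blocks agree, and so $[U_\mu, J] = 0$. Together with the skew-symmetry established above, this gives $U_\mu \in \ug(\ggo, J)$.
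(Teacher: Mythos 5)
Your proof is correct and follows essentially the same route as the paper's: the paper's entire argument is the observation that $U_\mu^t = -U_\mu$ and that $[A,J_1]=0$ (integrability) forces $[U_\mu,J]=0$, which is exactly what you verify, just with the block computations written out and the small but necessary step $[A,J_1]=0 \Rightarrow [A^t,J_1]=0$ made explicit. No gaps.
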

\begin{proof}
Notice that $U_\mu^t = - U_\mu$, and since $[A,J_1] = 0$ by Lemma \ref{lem_integrmuA}, we also have $[U_\mu, J] = 0$.
\end{proof}

By Theorem \ref{thm_gaugedBF}, we may study instead the solutions to the gauged bracket flow equation
\begin{equation}\label{eqn_gBFaa}
    \ddt \bar\mu = -\pi(P_{\bar\mu} - U_{\bar\mu}) {\bar\mu}, \qquad \bar\mu(0) = \mu_0.
\end{equation}

\begin{proposition}\label{prop_odeaa}
For an SKT almost-abelian initial condition $\mu_0 = \mu(a_0, v_0, A_0)$, the gauged bracket flow equation \eqref{eqn_gBFaa} is equivalent to the ODE system
\begin{equation}\label{eqn_gbfaa}
  \begin{cases}
        a' = c \, a, \\
        v' = c \, v + S \, v - \unm \Vert v\Vert^2 v, \\
        % v' = 2 \, c \, v - \unm A A^t v + \tfrac{a}{4} (A+A^t)  v, \\
        A' = c \, A,
  \end{cases}
\end{equation}
where $c = (\tfrac{k}{4} - \unm) a^2 - \unm \Vert v\Vert^2 \in \RR$, $2 \, k = \rank (A + A^t)$, and
\[
  S = S(a,A) = \big(\tfrac{k}{4} - \unm \big) a^2 \, \Id_{\ngo_1} - \unm A A^t + \tfrac{a}{4} (A+A^t).
\]
Moreover, the solution $\bar\mu(t) = \mu(a(t), v(t), A(t))$ to \eqref{eqn_gBFaa} consists entirely of SKT brackets.
\end{proposition}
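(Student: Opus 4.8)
The plan is to evaluate the right-hand side of the gauged bracket flow \eqref{eqn_gBFaa} on an almost-abelian SKT bracket $\mu = \mu(a,v,A)$ and read off the induced evolution of the data $a$, $v$, $A$. First I would assemble the endomorphism $B := P_{\mu} - U_{\mu}$ from the formula for $P_\mu$ in Lemma \ref{lem_Pmuaa} and the given expression for $U_\mu$. In block form with respect to \eqref{eqn_decggo} the off-diagonal skew parts cancel and one obtains
\[
  B = \threematrix{c}{0}{0}{2w}{-\tfrac{a}{4}(A - A^t)}{2 J_1 w}{0}{0}{c}, \qquad w = -\unc A^t v ,
\]
so that in particular $B$ preserves the abelian ideal $\ngo = \RR e_1 \oplus \ngo_1$.

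Since $\mu$ vanishes on $\ngo \wedge \ngo$ and $B(\ngo) \subseteq \ngo$, for $X,Y \in \ngo$ we get $(\pi(B)\mu)(X,Y) = -\mu(BX,Y) - \mu(X,BY) = 0$; this is precisely the statement that the gauged flow keeps $\ngo$ an abelian ideal, so $\bar\mu(t)$ stays almost-abelian. It then remains to evaluate $\pi(B)\mu$ on the generators $\mu(e_{2n},e_1)$ and $\mu(e_{2n},X)$ with $X \in \ngo_1$, using $(\pi(B)\mu)(e_{2n}, \cdot) = B\,\mu(e_{2n},\cdot) - \mu(Be_{2n},\cdot) - \mu(e_{2n}, B\,\cdot)$ together with $\mu(e_{2n},\cdot) = \ad e_{2n}$. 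Splitting $-(\pi(B)\mu)(e_{2n},e_1)$ into its $\RR e_1$- and $\ngo_1$-parts gives $a' = c\,a$ and, after substituting $w = -\unc A^t v$, the identity $v' = 2c\,v - \unm A A^t v + \tfrac{a}{4}(A + A^t)v$, which coincides with $c\,v + S\,v - \unm \Vert v\Vert^2 v$ upon expanding $S$ and using $2c = 2(\tfrac{k}{4}-\unm)a^2 - \Vert v \Vert^2$. For $X \in \ngo_1$ the same computation produces $-(\pi(B)\mu)(e_{2n},X) = c A X + \tfrac{a}{4}[A,A^t]X$; here the \emph{normality} $[A,A^t]=0$ supplied by the SKT condition (Theorem \ref{thm_SKT}) annihilates the commutator and leaves exactly $A' = c\,A$.

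It remains to justify the ``moreover'' clause, namely that the solution stays SKT, which also guarantees that the formula for $P_{\bar\mu}$ used above remains valid throughout. Because $a' = c\,a$ and $A' = c\,A$ share the same scalar coefficient $c = c(t)$, the solution satisfies $a(t) = \lambda(t)\,a_0$ and $A(t) = \lambda(t)\,A_0$ with $\lambda(t) = \exp\!\big(\int_0^t c\,ds\big) > 0$. Thus $A(t)$ is at all times a positive multiple of $A_0$, so $[A(t),A(t)^t] = [A(t),J_1] = 0$ and $a(t)A(t) + A(t)^2 + A(t)^t A(t) = \lambda(t)^2\big(a_0 A_0 + A_0^2 + A_0^t A_0\big) \in \sog(\ngo_1)$ by Lemma \ref{lem_SKT}; moreover $\rank(A(t)+A(t)^t) = 2k$ is constant, so $c$, $k$ and $S$ are genuinely the SKT-data of $\bar\mu(t)$ at every time. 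Finally, since $\mu(a(t),v(t),A(t))$ is SKT and hence solves \eqref{eqn_gBFaa} by the computation above, uniqueness of solutions to the gauged bracket flow ODE identifies it with $\bar\mu(t)$, establishing the claimed equivalence.

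The main obstacle is the bookkeeping in expanding $\pi(B)\mu$ on the generators, and especially noticing that the $A$-equation closes up only because the \emph{a priori} quadratic term $\tfrac{a}{4}[A,A^t]X$ drops out by normality; this is the one place where the SKT hypothesis is essential, and it explains why the reduction is valid precisely on SKT brackets. A minor point to handle carefully is the apparent circularity between invoking the explicit $P_\mu$ (valid only for SKT $\mu$) and proving that SKT is preserved, which is resolved cleanly by the scalar-multiple structure of the $a$- and $A$-equations.
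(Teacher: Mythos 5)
Your proposal is correct and takes essentially the same route as the paper: both compute the block form of $P_\mu - U_\mu$ from Lemma \ref{lem_Pmuaa}, evaluate the bracket flow vector field on $\ad e_{2n}$ to read off the equations for $a$, $v$, $A$, and invoke uniqueness of ODE solutions together with the common scalar factor $c$ in the $a$- and $A$-equations to see that the SKT condition (equivalently, normality of $A$ and the eigenvalue constraint of Theorem \ref{thm_SKT}) propagates. The only difference is organizational: the paper first writes the system with the extra term $\tfrac{a}{4}[(A-A^t),A]$ and then removes it by the preserved-normality/uniqueness argument, whereas you use normality from the outset and then explicitly close the resulting circularity (the formula for $P_\mu$ being valid only at SKT brackets) via the quadratic homogeneity of \eqref{eqn_SKTcondition} and uniqueness --- an equivalent resolution.
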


\begin{proof}
Notice that for any almost-abelian $\mu = \mu(a,v,A)$ we have
\[
  P_\mu - U_\mu = \threematrix{c}{0}{0}{2w}{\tfrac{a}{4}(A^t-A)}{2 J_1 w}{0}{0}{c},
\]
which preserves the subspaces $\ngo$ and $\ngo_1$. It follows that for the curve $\{h(t)\}\subset \Gl(\ggo)$ giving the solution $\bar\mu(t) = h(t) \cdot \mu_0$ (see Remark \ref{rmk_ht}) we have that $h(t) (\ngo) \subset \ngo$ and $h(t) (\ngo_1) \subset \ngo_1$. Thus, $\ngo$ is also a codimension-one ideal of $\bar\mu(t)$ for all $t$, and $\ngo_1$ is still preserved by the adjoint action of $e_{2n}$ (and invariant by $J$, since $J$ is fixed). Hence $\bar\mu(t)$ is of the form $\mu(a(t), v(t), A(t))$. To determine the evolution equations for $a$, $v$ and $A$ one uses \eqref{eqn_ade2n} and notices that
\[
    (\ad_{\bar\mu} e_{2n})' = \ad_{\bar\mu'} e_{2n} = - \ad_{\pi(P_{\bar\mu} - U_{\bar\mu}) \bar\mu} e_{2n} = -[(P_{\bar\mu}-U_{\bar\mu}), \ad_{\bar\mu} e_{2n}] + {\ad_{\bar\mu} } \big( (P_{\bar\mu} - U_{\bar\mu}) e_{2n} \big).
\]
Restricting to $\ngo$ yields
\[
  \twomatrix{a}{0}{v}{A}'  =
    - \left[  \twomatrix{c}{0}{2 w}{\tfrac{a}{4} (A^t-A)} , \twomatrix{a}{0}{v}{A} \right] + c \twomatrix{a}{0}{v}{A},
\]
from which one gets the evolution equations
\[
  \begin{cases}
        a' = c \, a, \\
        v' = 2 \, c \, v - 2 a w  - \tfrac{a}{4} (A^t-A) v + 2 A w , \\
        A' = c \, A   + \tfrac{a}{4} [(A - A^t), A].
  \end{cases}
\]
Independently of what $a(t)$ and $v(t)$ are, it follows from uniqueness of ODE solutions that for an initial condition with $A_0$ normal, $A(t)$ will stay normal for all $t$, and it will evolve by $A' = c A$. By Theorem \ref{thm_SKT} this implies that, as expected, the SKT condition is preserved along the flow. The evolution equation for $v$ follows by using that $w = -\unc A^t v$.
% = 2 \, c \, v + \tfrac{a}{2} A^t v + \tfrac{a}{4}(A-A^t) v - \unm AA^t v
\end{proof}

The following results will be important for the analysis of the ODE \eqref{eqn_gbfaa}:

\begin{lemma}\label{lem_S}
The symmetric map $S = S(a,A)$ from Proposition \ref{prop_odeaa} satisfies
\[
   % S\leq (\tfrac{k}{4} - 1) a^2 \Id_{\ngo_1},
    \la S u, u\ra \leq  \big(\tfrac{k}{4} - \unm \big) \, a^2 \, \Vert u\Vert^2,
\]
for all $u\in \ngo_1$, with equality if and only if $u \in \ker A$.
\end{lemma}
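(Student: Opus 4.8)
The plan is to reduce the asserted inequality to a manifestly nonpositive quadratic form, by exploiting the two structural consequences of the SKT condition recorded in Theorem \ref{thm_SKT}: that $A$ is normal, and that every eigenvalue of $A$ has real part in $\{0, -\tfrac a2\}$. First I would subtract the scalar term and rephrase the claim as $\la R\, u, u\ra \le 0$ for all $u \in \ngo_1$, with equality if and only if $u \in \Ker A$, where $R := -\unm A A^t + \tfrac a4 (A + A^t)$ is the symmetric endomorphism obtained from $S$ by dropping the multiple of the identity. Indeed, $\la S u, u\ra = \big(\tfrac k4 - \unm\big) a^2 \Vert u\Vert^2 + \la R\, u, u\ra$, so the two formulations are equivalent.

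Next I would split $A$ into its symmetric and skew-symmetric parts, $A = A_s + A_k$ with $A_s = \unm(A + A^t)$ and $A_k = \unm(A - A^t)$. Normality of $A$ is equivalent to $[A_s, A_k] = 0$, and this commutation makes the cross terms cancel, giving $A A^t = A_s^2 - A_k^2$; hence for every $u$ one has $\la A A^t u, u\ra = \Vert A_s u\Vert^2 + \Vert A_k u\Vert^2$ (using $A_s^t = A_s$ and $A_k^t = -A_k$). Since $A + A^t = 2 A_s$, this rewrites the quadratic form as
\[
  \la R\, u, u\ra = -\unm \Vert A_s u\Vert^2 - \unm \Vert A_k u\Vert^2 + \tfrac a2 \la A_s u, u\ra.
\]

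The crucial step is to feed in the eigenvalue constraint. The eigenvalues of the symmetric operator $A_s$ are exactly the real parts of the eigenvalues of $A$, so by Theorem \ref{thm_SKT} they lie in $\{0, -\tfrac a2\}$; equivalently $A_s$ satisfies the operator identity $A_s^2 = -\tfrac a2 A_s$, whence $\Vert A_s u\Vert^2 = -\tfrac a2 \la A_s u, u\ra$ for all $u$. Substituting $\tfrac a2 \la A_s u, u\ra = -\Vert A_s u\Vert^2$ into the display collapses the two $A_s$-terms and yields $\la R\, u, u\ra = -\tfrac32 \Vert A_s u\Vert^2 - \unm \Vert A_k u\Vert^2 \le 0$, which is the desired inequality after restoring the term $\big(\tfrac k4 - \unm\big) a^2 \Id_{\ngo_1}$.

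Finally, the last expression vanishes exactly when $A_s u = 0$ and $A_k u = 0$, and the step requiring the most care is converting this into the clean statement $u \in \Ker A$. Here I would again invoke normality, which gives $\Ker A = \Ker A^t$ and therefore $\Ker A = \Ker A_s \cap \Ker A_k$, so that $A_s u = A_k u = 0$ is equivalent to $A u = 0$. This also transparently covers the degenerate case $a = 0$, in which the spectral hypothesis forces $A_s = 0$ and the inequality reduces to $-\unm \Vert A_k u\Vert^2 = -\unm \Vert A u\Vert^2 \le 0$, with equality if and only if $u \in \Ker A$. I expect the only mild obstacle to be precisely this bookkeeping of the equality condition, together with the verification that the operator identity $A_s^2 = -\tfrac a2 A_s$ genuinely follows from the spectral hypothesis; there is no analytic difficulty.
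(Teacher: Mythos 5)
Your proof is correct, and it differs from the paper's in execution though not in its ingredients. The paper also reduces matters to the form $R = -\unm A A^t + \tfrac{a}{4}(A+A^t)$ and also uses exactly the two consequences of Theorem \ref{thm_SKT} (normality of $A$, and the constraint on the spectrum of its symmetric part), but it argues spectrally: it takes $u$ to be an eigenvector of $\unm(A+A^t)$ with eigenvalue $\lambda \in \{0, -a/2\}$, uses $\Vert A^t u\Vert = \Vert Au\Vert$ to obtain $\la Ru, u\ra = -\unm \Vert Au\Vert^2 + \tfrac{a}{2}\lambda \Vert u\Vert^2 \le 0$, and reads off the equality case; the extension from eigenvectors to arbitrary $u$ is left implicit (it requires observing that, by normality, $R$ commutes with $\unm(A+A^t)$ and hence preserves its eigenspaces, so the quadratic form splits along the orthogonal eigenspace decomposition). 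Your version replaces that spectral decomposition by the symmetric/skew splitting $A = A_s + A_k$ together with the operator identities $AA^t = A_s^2 - A_k^2$ and $A_s^2 = -\tfrac{a}{2}A_s$, which yields the manifestly nonpositive expression $-\tfrac{3}{2} \Vert A_s u\Vert^2 - \unm\Vert A_k u\Vert^2$ for every $u$ at once, so nothing is left implicit; that is what your route buys, at the cost of being slightly longer. One point to flag: your assertion that the eigenvalues of $A_s$ are exactly the real parts of the eigenvalues of $A$ is false for general matrices and is itself a consequence of normality (simultaneous unitary diagonalization of the commuting operators $A_s$ and $A_k$); since normality is in force by Theorem \ref{thm_SKT} the step is valid, but you should say explicitly that this is where it is used. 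Your handling of the equality case via $\Ker A = \Ker A_s \cap \Ker A_k$ (again by normality) is correct and matches the paper's conclusion.
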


\begin{proof}
Recall that $S = (\tfrac{k}{4} - \unm) a^2 \Id_{\ngo_1} - \unm A A^t + \tfrac{a}{4} (A+A^t)$. Let $u\in \ngo_1$ be an eigenvector of $\unm (A+A^t)$, with eigenvalue $\lambda \in \{ -a/2 , 0\}$ by Theorem \ref{thm_SKT}. Using that $A$ is normal we get
\[
      \Big\la { \big(-\unm AA^t + \tfrac{a}{4} (A+A^t) \big) } u, u \Big\ra  = -\unm \, \Vert A u \Vert^2 + \tfrac{a}{2} \, \lambda \, \Vert u \Vert^2 \leq 0,
\]
and equality holds if and only if $Au = 0$.
\end{proof}

\begin{lemma}\label{lem_beta}
Fix $\delta \geq 1$, $\beta > 0$  and consider a $C^1$ function $y : (t_0,\infty) \to (0,\infty)$ satisfying
\[
    2 \, y \, (\beta - \delta y)  \, \leq  \,  y'  \, \leq  \,  2 \, y \, (\beta - \delta^{-1} y),
\]
for all sufficiently large $t$. Then, the $\omega$-limit of $y(t)$ is contained in $[\delta^{-1} \beta, \delta \beta]$.
\end{lemma}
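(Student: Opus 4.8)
The plan is to establish the two one-sided bounds
\[
\limsup_{t\to\infty} y(t) \le \delta\beta, \qquad \liminf_{t\to\infty} y(t) \ge \delta^{-1}\beta
\]
separately; together they force every accumulation point of $y$ to lie in $[\delta^{-1}\beta,\delta\beta]$, which is exactly the claim about the $\omega$-limit. The two estimates are mirror images of each other, so I would treat the upper one in detail. The underlying idea is a comparison with logistic vector fields: the right-hand inequality $y'\le 2y(\beta-\delta^{-1}y)$ compares $y$ with the logistic field whose unique positive zero is $\delta\beta$, while the left-hand inequality $y'\ge 2y(\beta-\delta y)$ compares it with the one vanishing at $\delta^{-1}\beta$. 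Since $\delta\ge 1$ we have $\delta^{-1}\beta\le\delta\beta$, so the target interval is nonempty and contains $\beta$. Rather than integrate the logistic equation explicitly, I would run a direct barrier (trapping) argument.

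For the upper bound I would fix $\epsilon>0$ and examine the barrier level $\delta\beta+\epsilon$. Whenever $y(t)\ge \delta\beta+\epsilon$ (and $t$ is large enough for the hypotheses to hold), the right-hand inequality yields
\[
y' \le 2y\big(\beta-\delta^{-1}(\delta\beta+\epsilon)\big) = -2\,\delta^{-1}\epsilon\,y \le -2\,\delta^{-1}\epsilon\,(\delta\beta+\epsilon) < 0,
\]
so $y$ decreases at a rate bounded away from zero. This linear decay rules out $y$ staying above $\delta\beta+\epsilon$ for arbitrarily large times (positivity $y>0$ would be violated), so $y$ must drop below the barrier in finite time. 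Moreover, at any would-be upward crossing of the level $\delta\beta+\epsilon$ the same inequality gives $y'<0$, so $y$ can never re-enter $\{y>\delta\beta+\epsilon\}$ once it has left. Hence $y(t)<\delta\beta+\epsilon$ for all large $t$, giving $\limsup_{t\to\infty} y(t)\le \delta\beta+\epsilon$, and letting $\epsilon\to 0$ gives the upper bound.

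The lower bound is entirely analogous, using the barrier level $\delta^{-1}\beta-\epsilon$ with $0<\epsilon<\delta^{-1}\beta$ (so the barrier stays positive): there the left-hand inequality gives $y'\ge 2\delta\epsilon\,y>0$, which—because $y$ is positive—forces at least exponential growth while $y$ remains below the barrier, so $y$ exits upward in finite time and, by the same no-crossing argument, can never return below $\delta^{-1}\beta-\epsilon$. I expect the only delicate point to be bookkeeping rather than substance: the differential inequalities are assumed merely for sufficiently large $t$, so all of the above must start from a fixed time $t_1$ beyond which they hold, and one has to combine positivity of $y$ with the quantitative rates (linear decay above the top barrier, exponential growth below the bottom barrier) to guarantee that $y$ genuinely \emph{enters} the trapping interval $(\delta^{-1}\beta-\epsilon,\ \delta\beta+\epsilon)$ in finite time, not just approaches it. Intersecting these eventual trapping intervals over all $\epsilon>0$ yields $[\delta^{-1}\beta,\delta\beta]$, which completes the proof.
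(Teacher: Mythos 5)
Your proof is correct and follows essentially the same strategy as the paper's: both exploit the sign and size of $y'$ on either side of the thresholds (growth bounded below when $y$ is under $\delta^{-1}\beta$, forced decay when $y$ is over $\delta\beta$) together with a no-return observation to trap $y$ near $[\delta^{-1}\beta,\delta\beta]$. The paper merely packages this as a contradiction argument on $\liminf_{t\to\infty} y(t)$ (monotone increase to the would-be limit, then exponential blow-up) instead of your explicit $\epsilon$-barrier levels, but the mathematical content is the same.
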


\begin{proof}
Assume that $l := \liminf_{t\to\infty} y(t) < \delta^{-1} \beta$. Notice that for all $t$ such that $y(t) < \delta^{-1} \beta$ one has $y'(t) > 0$ by the differential inequality. Thus, in order to have $l < \delta^{-1}\beta$ the only possibility is that $y(t) < l$ for all $t$ and $y(t) \nearrow l$ as $t\to\infty$. But then there is a positive lower bound $\beta - \delta y(t) \geq \epsilon > 0$ for all $t>t_0$, yielding $y' \geq 2 \epsilon y$. Integrating we see that $y(t)$ blows up, contradicting the fact that $y(t) < l$ for all $t$.

The upper bound for $\limsup_{t\to \infty} y(t)$ follows analogously.
\end{proof}

We are now in a position to prove the main result of this section:

\begin{theorem}\label{thm_aa}
The pluriclosed flow of invariant SKT structures on a non-nilpotent, almost-abelian Lie group $\G$ is equivalent to the ODE given in \eqref{eqn_gbfaa}. For a maximal solution $\big(\bar\mu(t) =   \mu(a(t), v(t), \allowbreak A(t) ) \big)_{t\in [0,T)}$ with initial condition $\mu_0 = \mu(a_0,v_0, A_0)$ set $k := k(\G) := \unm \rank (A_0 + A_0^t) \in \ZZ_{\geq 0}$. Then, the behavior of $\bar\mu(t)$ and the corresponding normalized solution $\bar\mu(t)/ \Vert \bar\mu(t)\Vert$ are described in Table \ref{tab_almostabelian}. In every case, the normalized limits are pluriclosed algebraic solitons.
\end{theorem}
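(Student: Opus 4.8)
The plan is to reduce the statement to the explicit system \eqref{eqn_gbfaa}, whose equivalence with the pluriclosed flow is already furnished by Proposition \ref{prop_odeaa} together with Theorem \ref{thm_gaugedBF}; non-nilpotency of $\G$ means precisely that $(a_0,A_0)\neq(0,0)$, so we remain in the regime of that proposition. The first thing I would notice is that $a'=ca$ and $A'=cA$ share the same scalar coefficient $c=c(t)$, so both evolve by the common factor $\lambda(t):=\exp\!\big(\int_0^t c\,\dd s\big)$; thus $a(t)=\lambda(t)\,a_0$ and $A(t)=\lambda(t)\,A_0$. In particular $A(t)$ keeps its eigenvalue structure up to scaling, the integer $k=\unm\rank(A+A^t)$ is constant along the flow, and one may write $S(a,A)=\lambda^2 S_0$ and $c=\lambda^2 c_0$ with $S_0:=S(a_0,A_0)$ and $c_0=(\tfrac k4-\unm)a_0^2-\unm\Vert\tilde v\Vert^2$, thereby isolating all remaining time-dependence into $\lambda$ and $v$.

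The key simplification is to set $\tilde v:=v/\lambda$ and reparametrize time by $\dd\tau:=\lambda^2\,\dd t$. Using $v'=cv+Sv-\unm\Vert v\Vert^2 v$ and $\lambda'=c\lambda$, the $cv$ term cancels and the $v$-equation becomes the \emph{autonomous} gradient system
\[
   \frac{\dd\tilde v}{\dd\tau}=S_0\,\tilde v-\unm\Vert\tilde v\Vert^2\,\tilde v=\nabla F(\tilde v),\qquad F(\tilde v):=\unm\la S_0\tilde v,\tilde v\ra-\tfrac18\Vert\tilde v\Vert^4.
\]
Since $F$ is real-analytic with $F(\tilde v)\to-\infty$ as $\Vert\tilde v\Vert\to\infty$, the gradient-ascent flow is dissipative with bounded orbits, so $\tilde v(\tau)$ converges (exactly as in the proof of Theorem \ref{thm_limitnil}, via {\L}ojasiewicz's theorem \cite{Loj63}) to a critical point $\tilde v_\infty$, i.e.\ to an equilibrium $S_0\tilde v_\infty=\unm\Vert\tilde v_\infty\Vert^2\tilde v_\infty$. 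Its location is found by splitting $\tilde v$ along the eigenspaces of $S_0$, where the components decouple up to a common radial factor: Lemma \ref{lem_S} identifies the top eigenvalue and Lemma \ref{lem_beta} pins down $\Vert\tilde v_\infty\Vert^2$. Boundedness of $\tilde v$ also guarantees that $\lambda$, solving $\tfrac{\dd}{\dd\tau}\ln\lambda=c_0(\tilde v)$, exists for all $\tau\ge0$, so the solution is complete in the $\tau$-variable irrespective of the original maximal time.

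From here the conclusions follow. By linearity of $\mu$ in its data one has $\bar\mu(t)=\lambda(t)\,\mu(a_0,\tilde v(\tau),A_0)$, so the normalized bracket $\bar\mu/\Vert\bar\mu\Vert$ depends only on $\tilde v$ and converges to $\nu_\infty:=\mu(a_0,\tilde v_\infty,A_0)/\Vert\mu(a_0,\tilde v_\infty,A_0)\Vert$; since $\tilde v_\infty$ is an equilibrium, the gauged flow at this shape moves by pure scaling, so $\nu_\infty$ is a fixed point of \eqref{eqn_BFnorm} and hence an algebraic pluriclosed soliton by Proposition \ref{prop_solitonfixedpoint}. The maximal time is $T=\int_0^\infty\lambda^{-2}\,\dd\tau$, and as $\tfrac{\dd}{\dd\tau}\ln\lambda=c_0(\tilde v)\to c_\infty:=(\tfrac k4-\unm)a_0^2-\unm\Vert\tilde v_\infty\Vert^2$, this integral is finite exactly when $c_\infty>0$ (the borderline $c_\infty=0$ giving $T=\infty$, thanks to the exponential rate of convergence of the logistic-type equation for $\Vert\tilde v\Vert^2$). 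Thus $c_\infty$ is simultaneously the cosmological constant of the limit soliton (cf.\ Remark \ref{rem_staticaa}) and the indicator of finite-time extinction. Finally, Lemma \ref{lem_S} gives $\sigma_{\max}(S_0)\le(\tfrac k4-\unm)a_0^2$ with equality only on $\ker A_0$, so $c_\infty>0$ forces $\tfrac k4-\unm>0$, i.e.\ $k\ge3$; combined with the fact that unimodularity amounts to $a_0+\tr A_0=a_0(1-k)=0$ (using $\tr A_0=-k\,a_0$), i.e.\ $k=1$ or $a_0=0$, this yields $c_\infty\le0$ and hence $T=\infty$ in the unimodular case, while non-unimodular groups with $k\ge3$ and $v_0$ missing $\ker A_0$ give $c_\infty>0$ and finite extinction. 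Populating Table \ref{tab_almostabelian} is then bookkeeping over the cases $a_0=0$, $k\le2$ and $k\ge3$, refined by the position of $v_0$ relative to the top eigenspace of $S_0$.

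I expect the main obstacle to be this last step: identifying $\tilde v_\infty$ and the sign of $c_\infty$ in every case. Everything rests on the sharp inequality of Lemma \ref{lem_S} and on tracking which eigencomponent of $\tilde v$ survives the gradient flow, since this is precisely what separates steady limits (when $v_0$ meets $\ker A_0$, giving $c_\infty=0$) from shrinking ones ($c_\infty>0$, finite extinction) on one and the same Lie group --- the dichotomy underlying the Corollary.
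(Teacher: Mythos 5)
Your proposal is correct in substance and recovers every entry of Table \ref{tab_almostabelian}, and while it shares the paper's skeleton --- reduction to \eqref{eqn_gbfaa} via Proposition \ref{prop_odeaa} and Theorem \ref{thm_gaugedBF}, the decomposition of $v$ into $S_0$-eigencomponents governed by Lemmas \ref{lem_S} and \ref{lem_beta}, and Proposition \ref{prop_solitonfixedpoint} to recognize the limits as algebraic solitons --- it differs in two technical devices. First, you make the normalized equation autonomous through the explicit integrating factor $\lambda(t)=\exp\big(\int_0^t c\,\dd s\big)$ and the reparametrization $\dd\tau=\lambda^2\,\dd t$, where the paper instead invokes the normalization keeping $a^2+\Vert A\Vert^2$ constant (Remark \ref{rem_BFnorm}); both yield the same system \eqref{eqn_normgbfaa}. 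Second, you observe that this system is the gradient flow of the real-analytic functional $F(\tilde v)=\unm\la S_0\tilde v,\tilde v\ra-\tfrac18\Vert\tilde v\Vert^4$ and obtain convergence from {\L}ojasiewicz; the paper proves convergence by hand via the $r_i$-system and reserves the {\L}ojasiewicz argument for the nilpotent case (Theorem \ref{thm_limitnil}). Note that you still need the $r_i$-analysis to identify \emph{which} critical point is the limit, so this observation buys elegance rather than work. The genuinely valuable difference is your unified treatment of the maximal time: $T=\int_0^\infty\lambda^{-2}\,\dd\tau$ together with $\ddt[\tau]\ln\lambda=c_0(\tilde v)\to c_\infty$ replaces the paper's case-by-case unnormalized analysis; in particular the paper's argument for case (v), which rests on the conserved quantity $r_m/a^4$ and the limit \eqref{eqn_rma2}, is subsumed by your rate analysis, and the identification of $c_\infty$ as simultaneously the cosmological constant and the extinction indicator is made transparent.

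One justification needs patching. Your blanket claim that the borderline $c_\infty=0$ yields $T=\infty$ ``thanks to the exponential rate of convergence'' fails in general in cases (i) and (iv): there the surviving eigenvalue of $S_0$ is exactly $0$ (e.g.\ $0\neq v_0\in\ker A_0$), so $\Vert\tilde v\Vert^2$ obeys $y'\approx -y^2$ and decays only like $\tau^{-1}$. The conclusion survives for the trivial reason that in those cases $c_0\le 0$ pointwise, hence $\lambda\le1$ and $\int_0^\infty\lambda^{-2}\,\dd\tau=\infty$. The exponential rate is genuinely needed only in case (v), where $c_0$ may be positive along the flow, and there it does hold: for $i<s$ one has $\ddt[\tau]\ln r_i=2\lambda_i-\Vert\tilde v\Vert^2\to2(\lambda_i-\lambda_s)<0$, and near the equilibrium $r_s=\lambda_s>0$ the equation $r_s'=2r_s\big(\lambda_s-r_s-\sum_{i<s}r_i\big)$ is a logistic equation perturbed by an exponentially decaying term, so $r_s\to\lambda_s$ exponentially. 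Hence $\int_0^\infty c_0$ converges, $\lambda\to\lambda_\infty>0$, and $T=\infty$ with $\bar\mu\to\lambda_\infty\,\mu(a_0,\tilde v_\infty,A_0)\neq0$, recovering the table entry. Splitting the borderline analysis into these two sub-cases makes your argument complete. (Here $\ddt[\tau]$ denotes the derivative in the rescaled time $\tau$.)
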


% \begin{theorem}\label{thm_aa}
% The pluriclosed flow of invariant SKT structures on an almost-abelian Lie group $\G$ is equivalent to the ODE \eqref{eqn_gbfaa}. For a maximal solution $\big(a(t), v(t), A(t) \big)_{t\in [0,T)}$ with initial condition $(a_0,v_0, A_0)$ set $k := k(\G) := \unm \rank (A_0 + A_0^t) \in \ZZ_{\geq 0}$. Then, the following hold:
% % {\alert What about the case $k=0$ ?}
% \begin{itemize}
%   \item[(i)] If $k = 0$ then $T = +\infty$, and $(a(t), v(t), A(t)) \to 0$ as $t\to\infty$.
%   \item[(ii)] If $k=1$ then $T = + \infty$, and $(a(t), v(t), A(t)) \to 0$ as $t\to \infty$.

%   \item[(iii)] If $k =2$, or $k> 2$ and $v_0 \notin \Im A_0$, then $T = +\infty$, and $(a(t),v(t), A(t)) \to (\bar a, \bar v, \bar A)$ as $t\to\infty$. Moreover, $\bar A \bar v = 0$, and $(\bar a, \bar v, \bar A) \neq 0$ if $k>2$.

%   \item[(iv)]  If $k > 2$ and $v_0 \in \Im A_0$, then $T< +\infty$, and $ \Vert (a(t),v(t),A(t)) \Vert \to \infty$ as $t\nearrow T$.
% \end{itemize}
% In any case, the normalized solution keeping $a(t) \equiv a_0$ and $A(t) \equiv A_0$ constant converges to a pluriclosed algebraic soliton, which in case (i) is K\"ahler and Ricci-flat, and in cases (ii), (iii) and (iv) is respectively of expanding, steady {\alert (Bismut-Ricci flat?)} and shrinking type.
% \end{theorem}

{\small
\begin{table}
\[
\begin{array}{ccccccc}
  % \hline
  % \hline
  % &&&&& \\
  \mbox{Case}     & k   & \mbox{Constraints}  & \mbox{Unimodular} & T       &  \lim_{t\to T}\bar\mu(t)  & \lim_{t\to T} \bar\mu(t)/\Vert \bar\mu(t)\Vert    \\
    \hline   &&&&&&\\
  \mbox{(i)}      & 0   & a_0=0                   & \checkmark        & +\infty & \mu_\infty                     & \mbox{K\"ahler, Ricci-flat}  \\ \\
    \mbox{(ii)}      & 0   & a_0\neq 0                   & -        & +\infty & 0                     & \mbox{expanding soliton} \\ \\
  \mbox{(iii)}     & 1   & -                   & \checkmark        & +\infty & 0                     & \mbox{expanding soliton} \\ \\
  \mbox{(iv)}    & 2   & -                   &  -                & +\infty & \mu_\infty            & \mbox{steady soliton} \\ \\
  \mbox{(v)}     & >2  & v_0 \notin \Im A_0  &  -                & +\infty & \mu_\infty \neq 0     & \mbox{steady soliton}   \\ \\
  \mbox{(vi)}      & >2  & v_0 \in \Im A_0     &  -                & <\infty & \infty                & \mbox{shrinking soliton}  \\
  % \hline
  &&&&&&\\
  \hline
\end{array}
\]
\caption{The pluriclosed flow on almost-abelian Lie groups}\label{tab_almostabelian}
\end{table}}

% \begin{remark}
% The sign of $a$ doesn't change along the evolution equation \eqref{eqn_gbfaa}, thus by normalizing so that $a$ stays constant we don't lose any parts of the trajectories of the original equation.
% \end{remark}

% \begin{remark}
% Recall that $k$ is an invariant of the Lie group, whereas $v_0, A_0$ depend on the initial metric.
% \end{remark}

\begin{proof}
The first claim follows from Theorem \ref{thm_gaugedBF} and Proposition \ref{prop_odeaa}.

Turning to the ODE analysis, observe first that an initial condition with $(a_0,A_0) = (0,0)$ corresponds to a two-step nilpotent $\G$, and these were analyzed in Section \ref{sec_nilmfd}. Because of this, from now on we assume $(a_0,A_0) \neq (0,0)$.

\vskip5pt

{\bf \noindent Normalized flow.} We consider first the normalized flow $\nu_a(t)$ keeping $a^2 + \Vert A \Vert^2$ constant. As is well-known, up to a time reparameterization, the latter solves an ODE defined by a vector field which coincides with the one in \eqref{eqn_gbfaa} up to adding a multiple of the identity. Since substracting $c\cdot \Id$ in \eqref{eqn_gbfaa} yields
\begin{equation}\label{eqn_normgbfaa}
      \begin{cases}
        a' = 0, \\
         \tilde v' =   S  \tilde v - \unm \Vert  \tilde v \Vert^2  \tilde v, \\
        A' = 0,
    \end{cases}
\end{equation}
where $a$ and $A$ remain constant, it follows that $\nu_a(t) = \mu(a_0, \tilde v(t), A_0)$, for $\tilde v(t)$ solving \eqref{eqn_normgbfaa}. Observe that $S$ also remains  constant. Denote its eigenvalues by $\lambda_1 < \ldots < \lambda_m$, and recall that $\lambda_m \leq (\tfrac{k}{4} - \unm) a^2 =: \Lambda$ by Lemma \ref{lem_S}.

Assume that $k\leq 2$, which ammounts to saying that $\Lambda \leq 0$. By \eqref{eqn_normgbfaa} we have
\[
    \ddt \Vert \tilde v \Vert^2  = 2 \, \la S \tilde v,  \tilde v\ra - \Vert \tilde v \Vert^4 \leq 2 \Lambda \Vert \tilde v \Vert^2 - \Vert \tilde v\Vert^4 \leq - \Vert \tilde v\Vert^4,
\]
thus $\tilde v(t) \to 0$ as $t\to\infty$, by comparison with $y' = -y^2$. The limit bracket $\mu(a_0, 0, A_0) \neq 0$ is a fixed point of \eqref{eqn_normgbfaa}, and hence a pluriclosed algebraic soliton by Proposition \ref{prop_solitonfixedpoint}.

Suppose now that $k>2$ and decompose $\tilde v = v_1 + \cdots + v_s$ as a sum of eigenvectors of $S$, $s\leq m$, with $v_i$ eigenvector with eigenvalue $\lambda_i$ and $v_s \neq 0$. Each $v_i$ evolves only by scaling, thus if we set $r_i = \unm \Vert v_i \Vert^2$, the evolution equation \eqref{eqn_normgbfaa} turns out to be equivalent to the coupled system
\begin{equation}\label{eqn_ri}
    r_i ' =  2 \, \lambda_i \, r_i - \Vert \tilde v \Vert^2 \, r_i, \qquad i = 1, \ldots, s,
\end{equation}
where $\Vert \tilde v \Vert^2 = 2 (r_1+ \cdots + r_s)$. Using that $\Vert \tilde v \Vert^2 \geq 2 r_s,$ from \eqref{eqn_ri} we obtain
\[
  r_s'  \leq 2 r_s (\lambda_s - r_s).
\]
Since $r_s(t) \geq \lambda_s$ yields $r'_s(t) < 0$, it follows that $r_s$ is uniformly bounded above. On the other hand, for any $i < s$ \eqref{eqn_ri} gives
\[
    \ddt \log(r_i / r_s)  = 2 \, (\lambda_i - \lambda_s) < 0,
\]
from which $r_i(t) \to 0$ as $t \to \infty$, since $r_s(t)$ is bounded. Hence, $\Vert \tilde v \Vert^2 / 2 r_s \to 1$ as $t \to \infty$. If $\lambda_s \leq 0$ then it easily follows that $r_s(t) \to 0$ as $t\to \infty$, by comparison with $y' = -2y^2$. Thus, $\tilde v(t) \to 0$ as $t\to\infty$ in this case. For $\lambda_s > 0$  we may rewrite \eqref{eqn_ri} as
\[
  r_s '  = 2\,  r_s \Big(\lambda_s - \tfrac{\Vert \tilde v \Vert^2}{2 r_s} \cdot r_s \Big)
\]
and apply Lemma \ref{lem_beta} to conclude that $r_s(t) \to \lambda_s$ as $t\to\infty$. Indeed,  $\Vert \tilde v \Vert^2 / 2 r_s \to 1$, thus for any $\delta > 1$ there exists $t_0 > 0$ such that for all $t>t_0$ the assumptions of Lemma \ref{lem_beta} are satisfied for $y = r_s$, and this implies that the $\omega$-limit of $r_s(t)$ is contained in $[\delta^{-1} \lambda_s, \delta \lambda_s]$, for all $\delta > 1$.
% And if $r_i(0) = 0$ then $r_i(t)\equiv 0$ because \eqref{eqn_ri} may be regarded as a linear ODE with time-dependent coefficients.
In any case, we have that $\tilde v(t)$ converges as $t\to \infty$ to a fixed point $\tilde v_\infty$ of \eqref{eqn_normgbfaa}, and again the limit bracket $\mu(a_0,  \tilde v_\infty, A_0) \neq 0$ is a pluriclosed algebraic soliton by Proposition \ref{prop_solitonfixedpoint}.

The type of soliton we get in the limit can be detected by the sign of the cosmological constant~ $\alpha$, which by Remark \ref{rem_staticaa} coincides with $\tilde c_\infty := (\tfrac{k}{4} - \unm) a_0^2 -\unm \Vert \tilde v_\infty \Vert^2$, the limit of the $c$ component in $P_{\nu_a(t)}$. The above analysis yields that for $k\leq 2$  the limit satisfies $\tilde v_\infty=0$, and the results stated in Table \ref{tab_almostabelian} follow immediately. Recall that  $k>0$ implies $a_0 \neq 0$. When $k>2$, if $\tilde v_\infty=0$ then $\tilde c_\infty = (\tfrac{k}{4} - \unm) a_0^2 > 0$ and the soliton is shrinking. On the other hand, if $\tilde v_\infty \neq 0$ (which corresponds to the case $\lambda_s > 0$), the arguments in the previous paragraph imply that $\tilde c_\infty = \Lambda - \lambda_s \geq 0$, with equality if and only if we have both $s = m$ and  equality in Lemma \ref{lem_S}. In order for the latter to happen, the starting value $v_0$ must have a non-trivial component in $\ker A_0$, or in other words, $v_0 \notin \Im A_0$.

To conclude the proof of this case we must justify why is it enough to consider the normalized flow $\nu_a(t)$. To see that, notice that if $\nu(t) := \bar \mu(t) / \Vert \bar \mu(t)\Vert$ denotes the norm-normalized solution, then the fact that $\nu_a(t) \to \nu_\infty \neq 0$  implies that
\[
  \nu(t) = \frac{\nu_a(t)}{\Vert \nu_a(t)\Vert} \underset{t\to\infty}\longrightarrow \frac{\nu_\infty}{\Vert \nu_\infty \Vert},
\]
which differs from $\nu_\infty$ only by scaling.

\vskip5pt
{\bf \noindent Unnormalized flow. } Let us first  assume $k\leq 2$, so that $c\leq 0$ holds. From \eqref{eqn_gbfaa} we have
\[
	\ddt (a^2 + \Vert A \Vert^2 ) = 2 \, c \, (a^2 + \Vert A \Vert^2) \leq 0,
\]
and it follows that $a^2 + \Vert A \Vert^2$ is bounded. On the other hand, from the normalized flow analysis we know that $\Vert v \Vert^2 / (a^2 + \Vert A \Vert^2) \to 0$ as $t \to T$, which yields $v\to 0$. Hence, the bracket is bounded along the solution and $T=+\infty$. Since $a$ and $A$ evolve only by homotheties, and $c\leq 0$,  it is clear that there will be a limit bracket $\mu_\infty$.  For the cases (ii) and (iii) along the normalized flow $c$ converges to a negative constant. Translating this into the unnormalized flow yields
\[
	\frac{c}{a^2 + \Vert A \Vert^2} \leq - \epsilon < 0,
\]
for some $\epsilon > 0$, and for all $t$. Putting this into the above evolution equation gives
\[
	\ddt (a^2 + \Vert A \Vert^2) \leq - 2 \, \epsilon \, (a^2 + \Vert A \Vert^2)^2,
\]
from which $a, A \to 0$ by comparing with $y' = - 2 \epsilon y^2$. Thus, $\mu_\infty = 0$ in these cases. Regarding cases (i) and (iv), it can be seen that for some initial values the limit is non-zero (take for instance $v_0 = 0$), and for other it is zero (for example, $0 \neq v_0 \in \ker A_0$).

To conclude the proof let us deal now with the case $k > 2$. Assume first that we are in case~ (v), namely $v_0 \notin \Im A_0$.  Since $a$ and $A$ evolve only by scaling, and since $S$ is a homogeneous polynomial in $a$, $A$, it follows that $S$ also evolves only by scaling. In particular its eigenspaces are constant along the flow, and preserved by the ODE for $v$ in \eqref{eqn_gbfaa}. We may thus use the notation introduced in the analysis of the normalized flow, and decompose $v$ as a sum of $S$-eigenvectors. Using that $\tilde v(t) \to \tilde v_\infty \neq 0$ as $t \to \infty$, with $\tilde v_\infty$ an eigenvector of $S_0$ with eigenvalue $\lambda_m = (\tfrac{k}{4} - \unm) a_0^2$, we obtain that the unnormalized solution satisfies
\[
	\frac{r_m}{a^2 + \Vert A \Vert^2} \, \, \, \underset{t\to T}\longrightarrow  \, \, \,  L \neq 0.
\]
But $a^2 / \Vert A \Vert^2$ is constant along the flow, thus we also have
\begin{equation}\label{eqn_rma2}
	\frac{r_m}{a^2}  \, \, \, \underset{t\to T}\longrightarrow  \, \, \,  \tilde L \neq 0.
\end{equation}
On the other hand, the orthogonal projection $v_m$ evolves by
\[
    \ddt v_m =  c \, v_m + S v_m  - \unm \Vert v\Vert^2 \, v_m  = c \, v_m + (\tfrac{k}{4} -\unm) a^2 v_m - \unm \Vert v \Vert^2 v_m = 2 \, c \, v_m,
\]
therefore $r_m = \unm \Vert v_m \Vert^2$  satisfies
\[
    \ddt r_m = 4 \, c \, r_m.
\]
Since $a^4$ satisfies the same linear evolution equation, it is clear that $r_m / a^4$ is constant along the flow. Putting this together with \eqref{eqn_rma2} yields that $a^2$ converges to a non-zero value as $t\to T$. It is in particular bounded, hence so are also $A$ and $v$, and from this it follows that $T = +\infty$, and $\mu(t) \to \mu_\infty \neq 0$ as $t\to\infty$.

Finally, in case (vi) we have $v_0 \in \Im A_0$, and from the normalized flow analysis  we know that
\[
	\frac{c}{a^2} \to \tilde c_\infty > 0.
\]
(Recall that $a^2 / (a^2 + \Vert A \Vert^2)$ remains constant.) In particular, there exists $\epsilon > 0$ such that $c \geq \epsilon a^2$ for all $t$. Using this and \eqref{eqn_gbfaa} we obtain
\[
	\ddt a^2 \geq 2 \, \epsilon \,  a^4,
\]
from which it follows that $T < \infty$ and $\Vert \mu \Vert \to \infty$ as $t\to T$, by comparison with $y' = 2 \epsilon y^2$. This concludes the proof.
\end{proof}

The following is an immediate consequence of the proof of Theorem \ref{thm_aa}:

\begin{corollary}\label{cor_plurisolitons}
Let $\G$ be a non-nilpotent, almost-abelian Lie group endowed with a left-{\allowbreak}invariant SKT structure $(J,g)$, and denote by $\mu = \mu(a,v,A)$ its corresponding bracket. Then, $g$ is a pluriclosed soliton if and only if one of the following holds:
\begin{itemize}
  \item[(i)] $v = 0$;
  \item[(ii)] $v\neq 0$ is an eigenvector of $S = (\tfrac{k}{4} - \unm) a^2 \Id_{\ngo_1} - \unm A A^t + \tfrac{a}{4} (A+A^t)$, with eigenvalue $\lambda = \unm \Vert v \Vert^2$.
\end{itemize}
In case (i), when $a=0$ the soliton is K\"ahler Ricci-flat, and when $a\neq 0$ the soliton is expanding, steady or shrinking, according to whether $k <2$, $k=2$ or $k>2$, respectively. Case (ii) can only occur when $k>2$, and the soliton is steady when $\lambda = (\tfrac{k}{4}-\unm) a^2$ (equivalent to $A v = 0$), and shrinking when $\lambda < (\tfrac{k}{4}-\unm) a^2$.
\end{corollary}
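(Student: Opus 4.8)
The plan is to read off the solitons directly from the fixed-point description of the normalized flow obtained while proving Theorem \ref{thm_aa}. By Proposition \ref{prop_solitonfixedpoint}, a left-invariant SKT structure is an algebraic pluriclosed soliton precisely when its bracket is a fixed point of a normalized gauged bracket flow; for the gauge $U_\mu$ adapted to the almost-abelian setting, this normalized flow is exactly \eqref{eqn_normgbfaa}. First I would observe that in \eqref{eqn_normgbfaa} one has $a' = 0$ and $A' = 0$, so the only nontrivial fixed-point equation is $\tilde v' = 0$, i.e. $S\tilde v = \unm \Vert \tilde v\Vert^2 \tilde v$. This holds if and only if either $v = 0$, which is case (i), or $v \neq 0$ is an eigenvector of $S$ with eigenvalue $\lambda = \unm\Vert v\Vert^2$, which is case (ii). The implication that (i) or (ii) yields a soliton is then immediate from Proposition \ref{prop_solitonfixedpoint}, while for the converse one notes that a soliton forces the normalized solution of \eqref{eqn_normgbfaa} through it to be stationary, whence $v$ must solve $S v = \unm\Vert v\Vert^2 v$.

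Next I would determine the type of each soliton from its cosmological constant. By Remark \ref{rem_staticaa} this constant equals $\alpha = c = (\tfrac{k}{4} - \unm)a^2 - \unm\Vert v\Vert^2$, and by Proposition \ref{prop_solitonevol} the soliton is expanding, steady or shrinking according as $\alpha < 0$, $\alpha = 0$ or $\alpha > 0$. In case (i) one has $v = 0$, hence $\alpha = (\tfrac{k}{4}-\unm)a^2$, whose sign is governed by whether $k < 2$, $k = 2$ or $k > 2$. When in addition $a = 0$ one gets $k = \unm\rank(A+A^t) = 0$ (since $A$ is then skew-symmetric by Theorem \ref{thm_SKT}) and $w = -\unc A^t v = 0$, so $P_\mu = 0$; the structure is static with $\alpha = 0$ and is the Kähler Ricci-flat case recorded in row (i) of Table \ref{tab_almostabelian}.

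For case (ii) I would invoke the sharp eigenvalue estimate of Lemma \ref{lem_S}, which bounds every eigenvalue of $S$ by $\Lambda := (\tfrac{k}{4}-\unm)a^2$. Since here $\lambda = \unm\Vert v\Vert^2 > 0$ is an eigenvalue of $S$, we must have $\Lambda > 0$, forcing $k > 2$; this is why case (ii) occurs only in that range. The cosmological constant then becomes $\alpha = \Lambda - \lambda \geq 0$, so such solitons are never expanding: they are steady exactly when $\lambda = \Lambda$ and shrinking when $\lambda < \Lambda$. To identify the steady case with $A v = 0$ I would use the equality clause of Lemma \ref{lem_S}: $\lambda = \Lambda$ gives $\la S v, v\ra = \Lambda\Vert v\Vert^2$, hence $v \in \ker A$; conversely $A v = 0$ together with normality of $A$ (so $A^t v = 0$) yields $S v = \Lambda v$, i.e. $\lambda = \Lambda$.

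The main obstacle I anticipate is the bookkeeping in case (ii), and in particular the equivalence $\lambda = \Lambda \Leftrightarrow A v = 0$ together with the exclusion of $k \leq 2$, both of which rest entirely on the equality analysis in Lemma \ref{lem_S} and the normality of $A$ guaranteed by Theorem \ref{thm_SKT}. A secondary point needing a word of justification is the ``only if'' direction, namely that \emph{every} almost-abelian pluriclosed soliton is a fixed point of the specific gauged flow \eqref{eqn_normgbfaa}: this follows because the adapted gauge $U_\mu$ keeps the solution \eqref{eqn_gBFaa} inside the family $\mu(a,v,A)$ with $a$ and $A$ evolving only by scaling, so self-similarity of the soliton leaves no freedom other than a stationary $v$.
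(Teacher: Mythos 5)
Your overall route is the paper's own: the paper obtains this corollary directly from the proof of Theorem \ref{thm_aa}, namely by identifying solitons with fixed points of the adapted normalized flow \eqref{eqn_normgbfaa} via Proposition \ref{prop_solitonfixedpoint}, and then reading off the type from the cosmological constant $\alpha=c$ (Remark \ref{rem_staticaa}) together with the bound and equality case of Lemma \ref{lem_S}. Your ``if'' direction and your classification of types --- including the exclusion $k>2$ in case (ii) and the equivalence $\lambda=(\tfrac{k}{4}-\unm)a^2\Leftrightarrow Av=0$ via normality of $A$ from Theorem \ref{thm_SKT} --- are correct and coincide with the paper's arguments.

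There is, however, a genuine gap in your ``only if'' direction, exactly at the point you flag and then wave away. Proposition \ref{prop_solitonfixedpoint} makes an algebraic soliton a fixed point of a normalized bracket flow for \emph{some} gauge: its proof takes $U=-\unm(D-D^t)$, manufactured from the soliton derivation $D$, and there is no a priori reason for this $U$ to coincide with the adapted gauge $U_\mu$ entering \eqref{eqn_gBFaa} and \eqref{eqn_normgbfaa}. Your bridging claim, that ``self-similarity of the soliton leaves no freedom other than a stationary $v$'', is not a valid deduction: self-similarity only says that the Hermitian manifolds determined by $\mu(a_0,\tilde v(t),A_0)$ all lie in one class up to biholomorphic isometry and scaling, and a priori $\tilde v(t)$ could traverse a nonconstant curve of mutually equivalent brackets (for instance, an orbit of unitary automorphisms fixing $a_0$ and $A_0$) while the solution remains self-similar. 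So ``soliton $\Rightarrow Sv=\unm\Vert v\Vert^2 v$'' still requires an argument.

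Here is one that stays within the paper's toolkit, working directly from \eqref{eqn_defsoliton}. Suppose $P_\mu=\alpha\Id_\ggo+\unm(D+D^t)$ with $D\in\Der(\ggo)$, $[D,J]=0$. Since $\ggo$ is solvable and non-nilpotent, $D\ggo\subseteq\ngo$ (as used in Remark \ref{rem_staticaa}); combined with $[D,J]=0$ this forces $De_1=p\in\ngo_1$, $De_{2n}=J_1p$, and $D\ngo_1\subseteq\ngo_1$ with $D_1:=D|_{\ngo_1}$ commuting with $J_1$. The derivation identity applied to $\mu(e_{2n},\cdot)$ then gives $[D_1,A]=0$ and $(A-a\Id)p=D_1v$, while comparing $\unm(D+D^t)$ blockwise with $P_\mu-\alpha\Id$ from Lemma \ref{lem_Pmuaa} gives $\alpha=c$, $p=2w=-\unm A^tv$, and $D_1+D_1^t=-2c\Id_{\ngo_1}$. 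Write $D_1=-c\,\Id_{\ngo_1}+K$ with $K$ skew and $[K,A]=0$ (hence $[K,A^t]=0$ by transposing); since $A$ is normal, both $K$ and $\tfrac{a}{4}(A^t-A)$ commute with $S$. Substituting $p=-\unm A^tv$ into $(A-a\Id)p=D_1v$ and simplifying yields $K'v=Mv$, where $K':=K-\tfrac{a}{4}(A^t-A)$ is skew, $M:=S-\unm\Vert v\Vert^2\Id_{\ngo_1}$ is symmetric, and $[M,K']=0$. Consequently $\Vert Mv\Vert^2=\la M^2v,v\ra=\la K'^2v,v\ra=-\Vert K'v\Vert^2\le 0$, whence $Mv=0$, i.e.\ $Sv=\unm\Vert v\Vert^2v$, which is (i) or (ii). To be fair, the paper is equally laconic here --- it simply calls the corollary an immediate consequence of the proof of Theorem \ref{thm_aa} --- but your write-up promotes the missing implication to an explicit claim, so it must be justified along these (or equivalent) lines.
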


\begin{remark}\label{rem_steadystatic}
A quick computation using Lemma \ref{lem_Pmuaa} shows that all steady solitons in the almost-abelian case are in fact static solutions, i.e.~ they satisfy $P_\mu = 0$.
\end{remark}

By Proposition \ref{prop_genk}, an immediate consequence of Corollary \ref{cor_plurisolitons} is

\begin{corollary}
Pluriclosed  solitons on almost-abelian Lie groups with $k\leq 2$ (this includes the unimodular case) are compatible with a generalized K\"ahler structure.
\end{corollary}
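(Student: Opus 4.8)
The plan is to combine the soliton classification of Corollary \ref{cor_plurisolitons} with the generalized K\"ahler criterion of Proposition \ref{prop_genk}. Recall that Proposition \ref{prop_genk} asserts that an almost-abelian bracket $\mu=\mu(a,v,A)$ is compatible with a generalized K\"ahler structure precisely when it is SKT (i.e.~$aA+A^2+A^tA\in\sog(\ngo_1)$) and $v=0$. Since $(J,g)$ is assumed pluriclosed, the first condition holds automatically by Lemma \ref{lem_SKT}, so the whole problem reduces to showing that the soliton hypothesis forces $v=0$ whenever $k\leq 2$.

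For this I would inspect the two alternatives in Corollary \ref{cor_plurisolitons}. Case (ii) there---$v\neq 0$ an eigenvector of $S$ with eigenvalue $\unm\Vert v\Vert^2$---can only occur when $k>2$, as stated in that corollary. Hence under the hypothesis $k\leq 2$ only case (i) survives, which is exactly the condition $v=0$. Feeding $v=0$ together with the SKT condition into Proposition \ref{prop_genk} then yields compatibility with a generalized K\"ahler structure, completing the main assertion.

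It remains to justify the parenthetical remark that the unimodular case is subsumed under $k\leq 2$. Here I would use that an almost-abelian group is unimodular if and only if $\ad e_{2n}$ is traceless, i.e.~$a+\tr A=0$, since every $\ad X$ with $X\in\ngo$ is automatically traceless ($\ngo$ being an abelian ideal). By the eigenvalue description in Theorem \ref{thm_SKT}---each eigenvalue of $A$ has real part $0$ or $-\tfrac{a}{2}$, with $-\tfrac{a}{2}$ occurring with total multiplicity $2k$ (cf.~Lemma \ref{lem_Pmuaa})---one computes $\tr A=-ka$, the imaginary parts of the conjugate complex eigenvalues cancelling in the trace. Thus unimodularity reads $a(1-k)=0$, forcing either $a=0$ (whence $S(A)=0$ and $k=0$) or $k=1$; in both situations $k\leq 2$.

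There is no serious obstacle in this argument: it is essentially a bookkeeping synthesis of results already established. The only point requiring genuine care is the trace computation $\tr A=-ka$, which rests on $A$ being normal and commuting with $J_1$ so that its eigenvalues pair up with the prescribed real parts; I would keep that step explicit to make the unimodular inclusion airtight.
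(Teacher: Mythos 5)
Your proof is correct and follows essentially the paper's own (implicit) argument: the paper states this corollary as an immediate consequence of Corollary \ref{cor_plurisolitons} combined with Proposition \ref{prop_genk}, which is precisely your synthesis --- case (ii) of Corollary \ref{cor_plurisolitons} cannot occur for $k\leq 2$, forcing $v=0$, and then the SKT condition together with $v=0$ gives compatibility with a generalized K\"ahler structure. Your justification of the parenthetical unimodular claim via $\tr A = -ka$ (so that unimodularity $a+\tr A=0$ forces $a=0$, hence $k=0$, or else $k=1$) is also correct and merely spells out what the paper leaves implicit in Table \ref{tab_almostabelian}.
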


\begin{remark}\label{rem_semialg}
It is not hard to see that for pluriclosed solitons with $Av = 0$, the soliton derivation $D$ is in fact symmetric, cf.~ Remark \ref{rem_algebraic}. On the other hand, for the case when $Av \neq 0$ (which is satisfied for example by the shrinking solitons), $D$  is never symmetric. This can be seen by using Lemma \ref{lem_Pmuaa}, and the fact  that a derivation of a solvable Lie algebra must preserve the nilradical.
\end{remark}

We conclude this section with an example of a Lie group admitting two different types of left-invariant pluriclosed solitons.

\begin{example}\label{exa_shrinksteady}
Let $\mu = \mu(2,0,A)$ be the $10$-dimensional almost-abelian Lie bracket with
% $A \in \glg(8, \RR)$ a diagonal matrix given by
\[
    A := \twomatrix{- \Id_{\RR^6}}{0}{0}{0} \in \glg(8,\RR),
\]
the blocks according to a fixed decomposition $\RR^8 = \RR^6 \oplus \RR^2$. Set $J_1$ to be a complex structure on $\RR^8$ respecting said decomposition. Theorem \ref{thm_SKT} implies that the corresponding simply-connected Lie group with left-invariant Hermitian structure $(\G_A, J, g)$ is SKT, with $k = 3$. By Corollary~ \ref{cor_plurisolitons},~ (i), it is furthermore a shrinking pluriclosed soliton.

Consider now $\tilde \mu = \mu(2,v,A)$, with the same $A$, and $v = (0, \ldots, 0,1,1) \in \RR^8$ (six $0$'s). It is easy to see that after Proposition \ref{prop_movingbrackets}, this bracket corresponds to another left-invariant Hermitian structure $(J,\tilde g)$ on the same Lie group $\G_A$ (see \cite[Prop.~4.3]{LW17}), with the same complex structure. Again by Corollary \ref{cor_plurisolitons}, it turns out that $\tilde g$ is a steady pluriclosed soliton. Thus, the group $\G_A$ admits at the same time solitons of distinct type. In particular some solutions of the homogeneous pluriclosed flow will have finite extinction time, whereas some others will exist for all positive times.
\end{example}

\begin{appendix}

\section{Linear algebra estimates}\label{app}

\begin{lemma}\label{lem_appendix}
If $E\in \glg_n(\RR)$ has eigenvalues $ \lambda_1,\ldots,\lambda_n \in \CC$, then
\begin{equation}\label{eqn:ineqElambda}
 \tr E E^t =  \left\Vert E \right\Vert^2 \geq \sum_{i=1}^n \vert \lambda_i \vert^2,
\end{equation}
and equality holds if and only if $E$ is a normal operator.
\end{lemma}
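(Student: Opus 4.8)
The plan is to reduce everything to Schur triangularization over $\CC$, exploiting that the quantity $\|E\|^2 = \tr E E^t = \sum_{i,j} E_{ij}^2$ is precisely the squared Frobenius norm, which is invariant under unitary conjugation. First I would regard the real matrix $E$ as a complex one; since $E$ is real, its transpose coincides with its conjugate transpose, so $\tr E E^t = \tr E E^*$ and the normality condition $E E^t = E^t E$ reads the same whether interpreted over $\RR$ or over $\CC$. This lets me freely pass to the complex setting without losing track of the real statement.

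By Schur's theorem there exists a unitary $U$ with $U^* E U = T$ upper triangular, whose diagonal entries are exactly the eigenvalues $\lambda_1, \ldots, \lambda_n$. Unitary invariance of the Frobenius norm then gives
\[
  \|E\|^2 = \tr(E E^*) = \tr(T T^*) = \|T\|^2 = \sum_{i=1}^n |\lambda_i|^2 + \sum_{i<j} |t_{ij}|^2,
\]
which immediately yields the inequality \eqref{eqn:ineqElambda}, with equality precisely when all strictly upper-triangular entries $t_{ij}$ vanish, that is, when $T$ is diagonal.

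It remains to identify ``$T$ diagonal'' with ``$E$ normal'', which is the only step requiring genuine care. If $T$ is diagonal then $E = U T U^*$ is manifestly normal. Conversely, if $E$ is normal then so is $T = U^* E U$, and I would invoke the standard fact that a normal upper-triangular matrix must be diagonal: comparing the $(1,1)$ entries of $T T^*$ and $T^* T$ forces the off-diagonal part of the first row to vanish, after which an induction on the successive diagonal positions kills all remaining off-diagonal terms. This closes the equality characterization. The inequality itself is a one-line consequence of Schur's theorem; the main (and essentially only) obstacle is this last \emph{normal-triangular-implies-diagonal} argument, which carries the content of the equality case.
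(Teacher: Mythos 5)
Your proof is correct, and it takes a genuinely different route from the paper. You reduce the statement to the classical Schur triangularization argument over $\CC$: writing $E = U T U^*$ with $T$ upper triangular, unitary invariance of the Frobenius norm gives $\Vert E \Vert^2 = \sum_i \vert\lambda_i\vert^2 + \sum_{i<j}\vert t_{ij}\vert^2$, and the equality case is then exactly the standard fact that a normal triangular matrix is diagonal (your careful remark that $E^t = E^*$ for real $E$, so real and complex normality agree, is precisely the point that needs to be said, and you said it). The paper instead runs the negative gradient flow of $\Vert [E,E^t]\Vert^2$, namely the ODE $\ddt E = 4\left[E,\left[E,E^t\right]\right]$: the flow stays in the conjugation orbit of $E_0$ (so the eigenvalues, hence the right-hand side of \eqref{eqn:ineqElambda}, are constant), the norm is non-increasing along it, and {\L}ojasiewicz's theorem produces convergence to a normal fixed point $E_\infty$, where equality holds; monotonicity then gives the inequality and its equality characterization. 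Your approach is more elementary and self-contained --- it needs only Schur's theorem, no ODE theory or {\L}ojasiewicz --- and it is the textbook proof of what is essentially Schur's inequality. The paper's argument is heavier but deliberately so: it is the same moment-map/gradient-flow circle of ideas (norm decreasing along a conjugation-orbit flow, normal matrices as minima/fixed points) used in Section 3 for the bracket flow, so the appendix proof doubles as a toy model of the paper's main technique and generalizes to that GIT setting, where no analogue of Schur triangularization is available.
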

\begin{proof}
Consider on $\glg_n(\RR)$ the negative gradient flow for the functional $\Vert [E,E^t] \Vert^2$, whose integral curves satisfy the ODE
\begin{equation}\label{eqn_odeapp}
    \ddt E = 4 \left[E, \left[E,E^t\right] \right], \qquad E(0) = E_0.
\end{equation}
The solution $E(t)$ is entirely contained in the orbit $\Gl_n(\RR) \cdot E_0 := \{ h \, E_0 \, h^{-1} : h\in \Gl_n(\RR)\}$, because the vector field in \eqref{eqn_odeapp} is always tangent to it. Hence the eigenvalues stay constant. On the other hand, the norm evolves by
\[
    \ddt \Vert E \Vert^2 =  2 \tr E' E^t =   -8 \left\Vert \left[E,E^t\right] \right\Vert^2.
\]
This has two important consequences: firstly, there exists an accumulation point $E_\infty$, which is a fixed point of the system (thus $E_\infty$ is normal), and it is unique by {\L}ojasiewicz' theorem \cite{Loj63},  because the flow is the gradient flow of a polynomial function. Secondly, along the flow the the left-hand-side of \eqref{eqn:ineqElambda} is non-increasing, while the right-hand-side is  constant. Since $E_\infty$ is normal, equality is attained at the limit, and hence at all previous times the strict inequality must hold.
\end{proof}

\begin{corollary}\label{cor_app}
For any $E\in \glg_n(\RR)$ with eigenvalues $\lambda_1, \ldots, \lambda_n \in \CC$ we have that
\[
      \Vert S(E) \Vert^2 \geq \sum_{i=1}^n \Re(\lambda_i)^2,
\]
with equality if and only if $E$ is normal. Here $S(E) = \unm (E+E^t)$ denotes the symmetric part.
\end{corollary}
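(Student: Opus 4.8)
The plan is to deduce the corollary directly from Lemma \ref{lem_appendix} by means of a purely algebraic identity, so that the only genuine inequality involved is the one already established there. First I would expand the norm of the symmetric part: using $S(E) = \unm(E + E^t)$ together with the inner product $\la A, B\ra = \tr(A B^t)$ on $\glg_n(\RR)$, the identities $\la E, E^t\ra = \tr(E^2)$ and $\Vert E^t\Vert^2 = \Vert E\Vert^2$ give
\[
  \Vert S(E)\Vert^2 = \unm \Vert E\Vert^2 + \unm \tr(E^2).
\]

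Next I would rewrite the trace term via the eigenvalues. Since $\tr(E^2) = \sum_{i=1}^n \lambda_i^2$ and the eigenvalues of a real matrix come in complex-conjugate pairs, the imaginary contributions cancel, so this trace is real and equal to
\[
  \tr(E^2) = \sum_{i=1}^n \big( \Re(\lambda_i)^2 - \Im(\lambda_i)^2 \big).
\]
On the other hand, Lemma \ref{lem_appendix} supplies the bound $\Vert E\Vert^2 \geq \sum_{i=1}^n |\lambda_i|^2 = \sum_{i=1}^n \big( \Re(\lambda_i)^2 + \Im(\lambda_i)^2 \big)$. Substituting both of these into the identity above, the $\Im(\lambda_i)^2$ terms cancel and I obtain
\[
  \Vert S(E)\Vert^2 \geq \unm \sum_{i=1}^n \big(\Re(\lambda_i)^2 + \Im(\lambda_i)^2\big) + \unm \sum_{i=1}^n \big(\Re(\lambda_i)^2 - \Im(\lambda_i)^2\big) = \sum_{i=1}^n \Re(\lambda_i)^2,
\]
which is the desired inequality.

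For the equality case, I would note that since the expression for $\tr(E^2)$ is an exact identity with no slack, the difference collapses to
\[
  \Vert S(E)\Vert^2 - \sum_{i=1}^n \Re(\lambda_i)^2 = \unm \Big( \Vert E\Vert^2 - \sum_{i=1}^n |\lambda_i|^2 \Big).
\]
Hence equality holds in the corollary precisely when it holds in Lemma \ref{lem_appendix}, that is, if and only if $E$ is normal.

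There is essentially no obstacle to overcome here: the entire analytic content has already been absorbed into Lemma \ref{lem_appendix}, and what remains is the bookkeeping identity above. The only point deserving a word of care is the reality of $\tr(E^2)$ and its expression as $\sum (\Re(\lambda_i)^2 - \Im(\lambda_i)^2)$, which follows from the conjugate-pairing of the eigenvalues of a real matrix (equivalently, from $\tr(E^2)$ being the trace of a real matrix, hence real).
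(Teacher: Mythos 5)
Your proof is correct and follows essentially the same route as the paper: both expand $\Vert S(E)\Vert^2 = \unm\left(\tr E^2 + \Vert E\Vert^2\right)$, apply Lemma \ref{lem_appendix} to the $\Vert E\Vert^2$ term, and cancel the imaginary contributions by pairing conjugate eigenvalues, with equality traced back to the normality criterion of the lemma. Your write-up merely makes explicit the real/imaginary bookkeeping and the equality discussion that the paper leaves implicit.
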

\begin{proof}
Using Lemma \ref{lem_appendix} we obtain
\[
    \Vert S(E) \Vert^2 = \unm \left(\tr E^2 + \Vert E \Vert^2 \right)  \geq  \unm  \sum_{i=1}^n \left(\lambda_i^2 + \vert \lambda_i \vert^2 \right) = \sum_{i=1}^n \Re(\lambda_i)^2,
\]
where the last identity follows by pairing each non-real eigenvalue with its complex conjugate.
\end{proof}

\end{appendix}

\bibliography{aleklow}
\bibliographystyle{amsalpha}

%
%\printbibliography

\end{document}